\documentclass[12pt]{amsart}

\usepackage{amsmath}
\usepackage{amssymb}
\usepackage{mathrsfs}

\newtheorem{thm}{Theorem}
\newtheorem*{semiconj_dichot}{Theorem \ref{semiconj_dichot}}
\newtheorem{cor}{Corollary}
\newtheorem{conj}{Conjecture}
\newtheorem{lem}{Lemma}
\newtheorem{prop}{Proposition}
\newtheorem{claim}{Claim}

\newtheorem{obs}{Observation}

\theoremstyle{remark}
\newtheorem*{rem}{Remark}

\newcommand\PLoI{\mathrm{PL}_+ I}
\newcommand\HomeoI{\mathrm{Homeo}_+ I}
\newcommand\Homeo{\mathrm{Homeo}}
\newcommand\R{\mathbf{R}}
\newcommand\Z{\mathbf{Z}}
\newcommand\Q{\mathbf{Q}}
\newcommand\gen[1]{\langle #1 \rangle}
\newcommand\supt{\operatorname{supt}}

\newcommand\dom{\operatorname{dom}}
\newcommand\mand{\textrm{ and }}

\newcommand\Uscr{\mathscr{U}}
\newcommand\Vscr{\mathscr{V}}
\newcommand\Fscr{\mathscr{F}}
\title[Subgroups of $\PLoI$ not embedding into $F$]
{Subgroups of $\PLoI$ which do not embed\\ into Thompson's group $F$}

\keywords{$F$-obstruction, piecewise linear, rotation number, Thompson's group, topological conjugacy}

\subjclass[2010]{20B07, 20B10, 20E07, 37C15}

\thanks{
The authors would like to thank Collin Bleak, Matthew Brin, and Martin Kassabov
for suggesting comments and corrections to earlier versions of this paper. 
The research of the second author is support by  NSF grant DMS-1854367.
}

\author{James Hyde}
\author{Justin Tatch Moore}

\address{Department of Mathematics \\ Cornell University\\
Ithaca, NY 14853--4201 \\ USA}

 
\begin{document}

\begin{abstract}
We will give a general criterion --- the existence of an \emph{$F$-obstruction} ---
for showing that
a subgroup of $\PLoI$ does not embed into Thompson's group $F$.
An immediate consequence is that Cleary's ``golden ratio'' group $F_\tau$
does not embed into $F$.
Our results also yield a new proof that Stein's groups
$F_{p,q}$ \cite{grps_PLoI} do not embed into $F$,
a result first established by Lodha using his theory of coherent actions.
We develop the basic theory of $F$-obstructions and show that they exhibit certain
rigidity phenomena of independent interest.

In the course of establishing the main result of the paper, we prove a dichotomy theorem 
for subgroups of $\PLoI$.
In addition to playing a central role in our proof, it is strong enough to
imply both Rubin's Reconstruction Theorem restricted to the class
of subgroups of $\PLoI$ and also Brin's Ubiquity Theorem.
\end{abstract}

\maketitle

\section{Introduction}

In this article, we aim to give a partial answer to the following question:
\emph{When does a group of piecewise linear homeomorphisms of the unit interval
fail to embed into Richard Thompson's group $F$?}
Thompson's group $F$ is the subgroup of $\PLoI$
consisting of those functions whose breakpoints occur at dyadic rationals and whose slopes are powers of $2$.
We isolate the notion of an \emph{$F$-obstruction} based on Poincar\'e's \emph{rotation number} and
show that subgroups of $\PLoI$ which contain $F$-obstructions do not embed into $F$.

In the course of proving the main result of the paper, we establish a dichotomy theorem for subgroups
of $\PLoI$.
This result seems likely to be of independent interest as it is already sufficiently powerful to prove both 
Brin's Ubiquity Theorem \cite{ubiquity} and the restriction of 
Rubin's Reconstruction Theorem \cite[Corollary 3.5(c)]{RubinTheorem} to the class of subgroups of $\PLoI$
(see also  \cite[Theorem 4]{McCleary} and \cite[E16.3]{BieriStrebel} which were precursors to \cite{RubinTheorem}).

\subsection{Rotation numbers and $F$-obstructions}

Recall that if $\gamma$ is a homeomorphism  of the circle $\R/\Z$, then the \emph{rotation number} of
$\gamma$ is defined to be
$$
\lim_{n \to \infty} \frac{\widetilde{\gamma}^n(x) - x}{n}
$$
modulo $1$
where $\widetilde{\gamma}:\R \to \R$ is a lift of $\gamma$
(this limit always exists and, modulo 1, does not depend on $x$ or the choice of $\widetilde{\gamma}$).
Observe that if $\gamma$ is a rotation of $\R/\Z$ by $r \in (0,1)$, then we can take
$\widetilde{\gamma}(x) = x+r$ and the rotation number of $\gamma$ is $r$.
In fact Poincar\'e showed that if $\gamma$ is any homeomorphism such that no finite power has a fixed point,
$\gamma$ is semiconjugate to the irrational rotation specified by its rotation number.
On the other hand, if $\gamma^q$ has a fixed point for some $q$, then we can take $x \in \R$ and $\widetilde{\gamma}$
such that $\widetilde{\gamma}^q(x) = x+p$ for some $p \in \Z$ with $0 \leq p < q$.
It follows that the rotation number is $p/q$.

If $f,g \in \HomeoI$ and $s \in I$ are such that $$s < f(s) \leq g(s) < f(g(s)) = g(f(s))$$
then the \emph{rotation number $f$ modulo $g$ at $s$} is the rotation number of the
function on $[s,g(s))$ defined by $x \mapsto  g^{-m}(f(x))$ 
where $m$ is such that $s \leq g^{-m}(f(x)) < g(s)$.
This map is a homeomorphism of a circle when $[s,g(s))$ is given a suitable topology.
 
A pair $(f,g)$ of elements of $\PLoI$ is an \emph{$F$-obstruction} if there is an $s$ such that either:
\begin{itemize}

\item $s < f(s) \leq g(s) < f(g(s)) = g(f(s))$
and the rotation number of $f$ modulo $g$ at $s$ is irrational;

\item $s > f(s) \geq g(s) > f(g(s)) = g(f(s))$
and the rotation number of $f^{-1}$ modulo $g^{-1}$ at $f(g(s))$ is irrational.

\end{itemize}
It follows from work of Ghys and Sergiescu \cite{T_rational_rot} that the standard way of representing
$F$ in $\PLoI$ does not contain any $F$-obstructions
(see Section \ref{F_obs_in_F:sec}).

The main result of this paper is that 
the property of being an $F$-obstruction is preserved by monomorphisms into $\PLoI$.

\begin{thm} \label{main_result}
If $(f,g)$ is an $F$-obstruction
and $\phi:\gen{f,g} \to \PLoI$ is a monomorphism,
then $(\phi(f),\phi(g))$ is an $F$-obstruction.
In particular, if $G \leq \PLoI$ contains an $F$-obstruction,
then $G$ does not embed into $F$.
\end{thm}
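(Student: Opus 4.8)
\emph{Plan of the proof.} The ``in particular'' clause is a formal consequence of the first assertion together with the fact, recorded in Section~\ref{F_obs_in_F:sec}, that the standard copy of $F$ in $\PLoI$ contains no $F$-obstruction: if some $G\le\PLoI$ contained an $F$-obstruction $(f,g)$ and admitted an embedding $\iota\colon G\to F$, then post-composing $\iota|_{\gen{f,g}}$ with the standard inclusion $F\le\PLoI$ would be a monomorphism sending $(f,g)$ to an $F$-obstruction lying in the standard $F$, which is impossible. So I concentrate on the first assertion, and I begin by reducing to the first clause of the definition. A direct computation shows that if $(f,g)$ satisfies the second clause at $s$ then $(f^{-1},g^{-1})$ satisfies the first clause at $f(g(s))$ with the same rotation number, and one checks just as directly that the class of $F$-obstructions is closed under $(a,b)\mapsto(a^{-1},b^{-1})$; since $\phi(a^{-1})=\phi(a)^{-1}$, it therefore suffices to treat a pair $(f,g)$ with
$$ s<f(s)\le g(s)<f(g(s))=g(f(s)) $$
for which the circle homeomorphism $F_s$ of $[s,g(s))$ given by $x\mapsto g^{-m}f(x)$ has irrational rotation number $\rho$. (Here $f(s)<g(s)$ necessarily, since $f(s)=g(s)$ would make the basepoint of the circle a fixed point of $F_s$ and force $\rho=0$.)

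The goal is then to transfer this irrational rotation number to $(\phi(f),\phi(g))$, and I would do this by showing that $\phi$ is \emph{spatial near $s$}: that there are an interval $J$ containing $s$ (large enough that $f|_J$ and $g|_J$ determine $F_s$), a point $t\in I$, an interval $J'\ni t$, and a homeomorphism $h\colon J\to J'$ with $h(s)=t$ and $\phi(f)=hfh^{-1}$, $\phi(g)=hgh^{-1}$ on $J'$. Granting this the theorem follows: $h$ carries the displayed chain of inequalities to the corresponding chain for $(\phi(f),\phi(g))$ at $t$, and it conjugates $F_s$ to the circle homeomorphism attached to $(\phi(f),\phi(g))$ at $t$; since the rotation number of a degree-one circle homeomorphism is invariant under (semi)conjugacy, that map again has rotation number $\rho$, so $(\phi(f),\phi(g))$ is an $F$-obstruction via the first clause at $t$.

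To establish spatiality I would apply the dichotomy theorem, Theorem~\ref{semiconj_dichot}, to the local action of $\gen{f,g}$ near $s$ and that of $\gen{\phi(f),\phi(g)}$ near the points forced on it by $\phi$. The essential input is that irrationality of $\rho$ makes the local action at $s$ dynamically rich: by Poincar\'e's classification $F_s$ has no periodic orbit, so its unique minimal set $\Lambda\subseteq[s,g(s))$ is a Cantor set or the whole circle, the orbit of $s$ under $F_s$ accumulates on $\Lambda$ from both sides, and unrolling $\Lambda$ by powers of $g$ produces a $\gen{f,g}$-invariant closed set clustering at $s$ on which $\gen{f,g}$ exhibits the local density that a reconstruction argument needs. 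This richness is exactly what excludes the degenerate alternative of the dichotomy on the source side; the dichotomy then either directly furnishes the conjugating (or semiconjugating) germ $h$ --- the same mechanism by which Theorem~\ref{semiconj_dichot} yields Rubin's reconstruction theorem for subgroups of $\PLoI$ --- or exhibits on the target side a finite periodic orbit, equivalently a rational rotation number, for the circle homeomorphism of $(\phi(f),\phi(g))$, which after transporting back through $\phi$ together with the richness above would contradict the irrationality of $\rho$.

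The crux, and the step I expect to be the genuine difficulty, is this spatiality assertion. An abstract monomorphism of a two-generator group carries no information about the order structure of $I$, so there is a priori no ``image of $s$''. What saves the argument is that an $F$-obstruction with irrational rotation number forces $\gen{f,g}$ to act near $s$ in a way modelled on the two commuting translations $x\mapsto x+\rho$ and $x\mapsto x+1$ of the line --- rich enough to be reconstructed from the bare group structure --- and Theorem~\ref{semiconj_dichot} is precisely the instrument that upgrades an abstract isomorphism between two such actions inside $\PLoI$ to a topological (semi)conjugacy while keeping the non-rigid alternative under control.
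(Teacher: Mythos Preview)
Your outline has the right architecture --- use Theorem~\ref{semiconj_dichot} to produce a $G$-equivariant monotone surjection from an orbital of $\gen{\phi(f),\phi(g)}$ onto the orbital $J\ni s$ of $\gen{f,g}$, then transport the irrational rotation number across it --- and this is exactly what the paper does. But the crucial set-up step is missing: you never say to which single group $G\le\PLoI$ the dichotomy is to be applied. Theorem~\ref{semiconj_dichot} concerns \emph{one} group with a resolvable orbital $J$ and further orbitals $K_i$; you speak of ``the local action of $\gen{f,g}$'' and separately ``that of $\gen{\phi(f),\phi(g)}$'', but these are different groups. The paper's device is to rescale so that $\supt\gen{f,g}$ and $\supt\gen{\phi(f),\phi(g)}$ are disjoint and then form the \emph{diagonal} group $G=\gen{f\phi(f),\,g\phi(g)}$; now $J$ and the orbitals $K_i$ of $\gen{\phi(f),\phi(g)}$ are all orbitals of this one $G$. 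With that in hand, alternative~(\ref{kernel}) is ruled out in one line by injectivity of $\phi$: an element of $G$ trivial on every $K_i$ has the form $w(f,g)\cdot w(\phi(f),\phi(g))$ with $w(\phi(f),\phi(g))=\id$, hence $w(f,g)=\id$, so it is trivial on $J$ as well. Your account of how that alternative fails --- ``a finite periodic orbit, equivalently a rational rotation number, on the target side'' --- does not match what alternative~(\ref{kernel}) actually asserts.

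There is a second gap. The dichotomy requires $J$ to be a \emph{resolvable} orbital: the sets $\supt(h)\cap J$ for $h\in G$ must form a base for the topology of $J$. Your Poincar\'e/minimal-set paragraph argues that \emph{orbits} are dense, but resolvability demands group elements with arbitrarily small \emph{support}, which is strictly stronger and is not a consequence of minimality of the circle map $\gamma$. In the paper this is exactly Proposition~\ref{dense_bumps}, whose proof (via Lemma~\ref{remain_long}) genuinely uses the piecewise-linear structure together with the equicontinuity of the iterates $\gamma^n$ to manufacture bumps with prescribed small support; some construction of this kind is needed before Theorem~\ref{semiconj_dichot} can even be invoked.
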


\subsection{A dichotomy for subgroups of $\PLoI$}

Theorem \ref{main_result} is first established for $F$-obstructions which generate a group
with a single \emph{orbital} --- a component of support. 
The general case is then handled by way of a dichotomy theorem for subgroups of $\PLoI$.
This dichotomy is strong enough to imply both Brin's Ubiquity Theorem \cite{ubiquity}
and a form of Rubin's Reconstruction Theorem \cite[Corollary 3.5(c)]{RubinTheorem} for subgroups of $\PLoI$
(see Section \ref{resolvable:sec}).

If $G \leq \PLoI$, then we say that $J$ is a \emph{resolvable} orbital of $G$ if $J$ is an orbital of $G$ and
$\{\supt(g) \cap J \mid g \in G\}$ forms a base for the topology on $J$.
If $G \leq \HomeoI$, a partial function $\psi : I \to I$ is \emph{$G$-equivariant}
if its domain is $G$-invariant and for all $g \in G$ and $x \in \dom(\psi)$, $\psi(g(x)) = g(\psi(x))$.
Our dichotomy theorem can now be stated as follows:

\begin{thm} \label{semiconj_dichot}
Suppose that $G \leq \PLoI$
and $J$ is a resolvable orbital of $G$.
If $K_i$ $(i < n)$ is a sequence of orbitals of $G$,
then exactly one of the following is true:
\begin{enumerate}

\item \label{kernel}There is a $g \in G$ whose support intersects $J$ but is disjoint from $K_i$ for all $i < n$.
\item \label{semiconj}There is an $i < n$ and a monotone surjection
$\psi: K_i \to J$ which is $G$-equivariant.

\end{enumerate}
\end{thm}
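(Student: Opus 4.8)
\emph{The plan} is to check that the two alternatives cannot hold simultaneously and that the failure of \eqref{kernel} forces \eqref{semiconj}; together these give the stated dichotomy. For the first point, suppose $\psi\colon K_i\to J$ is a $G$-equivariant monotone surjection and $g\in G$ has $\supt(g)\cap J\neq\emptyset$. Choose $y\in J$ with $g(y)\neq y$ and, by surjectivity, $x\in K_i$ with $\psi(x)=y$. Then $\psi(g(x))=g(\psi(x))=g(y)\neq y=\psi(x)$, so $g(x)\neq x$ and $\supt(g)\cap K_i\neq\emptyset$, contradicting \eqref{kernel}.

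\emph{Set-up for the remaining direction.} Assume \eqref{kernel} fails: every $g\in G$ with $\supt(g)\cap J\neq\emptyset$ moves some $K_i$. I first record two points. Each orbital of $G$ is $G$-invariant: $g$ permutes the components of $\supt(G)$, and if it moved a component $K$ off itself then $K$ would be contained in a single component of $\supt(g)$, which $g$ fixes setwise; but that component, being a connected subset of $\supt(G)$ containing $K$, must equal $K$ by maximality of the latter, a contradiction. Hence $g\mapsto g|_{K_i}$ is a homomorphism $G\to\Homeo(K_i)$ with kernel the normal subgroup $N_i=\{g\in G:\supt(g)\cap K_i=\emptyset\}$, and our hypothesis says precisely that the normal subgroup $N_0\cap\cdots\cap N_{n-1}$ acts trivially on $J$. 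Secondly, resolvability of $J$ says that $\{\supt(g)\cap J:g\in G\}$ is a basis for $J$; in particular every $y\in J$ has arbitrarily small neighbourhoods of this form.

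\emph{Reconstructing a point of some $K_i$ from the germ at $y$.} Fix $y\in J$ and, by resolvability, a sequence $(g_k)$ in $G$ with $y\in\supt(g_k)\cap J$ and $\diam(\supt(g_k)\cap J)\to 0$. Each $g_k$ moves some $K_{i(k)}$, and as the list $K_i$ $(i<n)$ is finite we may, after passing to a subsequence, fix a single index $i$ and choose $x_k\in\supt(g_k)\cap K_i$; after a further subsequence $x_k\to x\in\cl K_i$. Call $x$ a \emph{shadow} of $y$ when the limit can be arranged to lie in the interior $K_i$. The technical heart of the argument is then: (i) every $y\in J$ has a shadow in some $K_i$; and (ii) one index $i$ serves for all $y\in J$ at once, and for that $i$ the ``shadow of'' relation, suitably closed up, is the graph of a $G$-equivariant monotone surjection $\psi\colon K_i\to J$. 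Since a monotone surjection between open intervals is automatically continuous, such a $\psi$ is the required semiconjugacy, giving \eqref{semiconj}.

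\emph{What is routine and what is not.} With (i) and (ii) in hand the remaining verifications are routine: $G$-invariance of the shadow relation comes from conjugating the $g_k$; monotonicity comes from resolvability, by inserting between two points $y<y'$ of $J$ an element supported in $(y,y')\cap J$ and comparing the senses in which nearby points move; and surjectivity onto the open interval $J$ --- rather than onto $\cl J$ --- comes from driving $y$ toward the endpoints of $J$ by elements of $G$ and following the shadows. The real difficulties, and the places where resolvability of $J$ is used a second and essential time, are (i) and the ``single index'' part of (ii). For (i) one must rule out that, for some $y$, the $K_i$-parts of all sufficiently $J$-small elements escape to the endpoints of every $K_i$ at once; in that situation one should be able to multiply such an element by a suitable conjugate or commutator so as to trivialize it on each $K_i$ while leaving its effect on $J$ near $y$ nontrivial, contradicting that $N_0\cap\cdots\cap N_{n-1}$ fixes $J$. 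Performing this cancellation uniformly in $y$ --- and, for (ii), excluding that different points of $J$ are genuinely forced into different $K_i$ --- is the step I expect to be the main obstacle; finiteness of the list $K_i$ is exactly what keeps both the pigeonhole above and this cancellation manageable.
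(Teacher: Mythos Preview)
Your check that the two alternatives are mutually exclusive is fine. The rest, however, is not a proof but a plan: you explicitly flag steps (i) and (ii) as ``the main obstacle'' and offer only a heuristic (``multiply by a suitable conjugate or commutator so as to trivialize it on each $K_i$'') without carrying it out. That heuristic is exactly where the content lies, and your outline gives no mechanism for completing it. The cancellation you propose for (i) --- killing the $K_i$-action while preserving nontrivial $J$-action near $y$ --- is not achievable by naive commutator tricks: if the $K_i$-supports of your small elements reach the endpoints of $K_i$, so typically will those of their conjugates, and the commutator need not vanish on $K_i$. Your shadow of $y$ also depends on the sequence $(g_k)$ and the subsequence chosen; you give no argument that different choices are compatible, nor that the resulting relation, ``suitably closed up'', is the graph of a function $K_i\to J$ rather than a many-to-many correspondence. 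Most tellingly, nothing in your sketch uses piecewise linearity, yet the paper notes the theorem fails in $\HomeoI$; whatever fills your gap must invoke the PL hypothesis essentially, and at present it does not appear at all.

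The paper's argument for the base case $n=1$ is structurally different. It first isolates a criterion (Lemma~\ref{semiconj_crit}): the equivariant monotone surjection $K\to J$ exists once one finds open intervals $U$, $V$ with $\overline U\subset J$, $\overline V\subset K$ and $Ug\cap U\neq\emptyset \Leftrightarrow Vg\cap V\neq\emptyset$ for all $g\in G$. To produce such $U$ and $V$, the paper constructs (Lemma~\ref{seed_subgrp}) a \emph{perfect} subgroup $H\leq G$ with finitely many orbitals, exactly one of which lies in $J$ and is resolvable there, with $\overline{\supt H\cap K}\subset K$, and with the \emph{minimum} possible number of orbitals subject to these constraints. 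One then takes $U=\supt H\cap J$ and $V$ a component of $\supt H\cap K$; if the displayed equivalence failed for some $g$, a projection lemma (Lemma~\ref{projection_lem}) would let one strip an orbital from $H$ while keeping it perfect and leaving $H|_J$ unchanged, contradicting minimality. Piecewise linearity enters through Lemma~\ref{com_supt} (for $a\in G'$, $\overline{\supt}(a)\subseteq\supt G$), which drives the perfectness and projection machinery. Your ``trivialize on $K_i$ by a commutator'' is morally aiming at this projection lemma, but that lemma and the surrounding structure theory of perfect subgroups require genuine work that your outline does not supply. The step from $n=1$ to general $n$ is, by contrast, short: given witnesses $g_0$ for $\{K_i:i<n\}$ and $g_1$ for $\{K_n\}$, resolvability of $J$ provides $h$ with $[g_0^h,g_1]$ nontrivial on $J$, and this commutator is supported off every $K_i$.
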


\subsection{Corollaries of Theorem \ref{main_result}}
Our original motivation for proving Theorem \ref{main_result} is the following corollary
(see Section \ref{examples:sec} for the definitions of $F_\tau$ and $F_{p,q}$).

\begin{cor} \label{Ftau}
Cleary's group $F_\tau$ does not embed into $F$.
\end{cor}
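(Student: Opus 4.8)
The plan is to derive the corollary directly from Theorem~\ref{main_result}: since $F_\tau$ is by construction a subgroup of $\PLoI$, it suffices --- by the second assertion of that theorem --- to exhibit a single $F$-obstruction lying in $F_\tau$. Recall that $F_\tau$ consists of those elements of $\PLoI$ all of whose breakpoints lie in $\Z[\tau]$ and all of whose slopes are integral powers of $\tau$, where $\tau=(\sqrt5-1)/2$ is the positive root of $x^2+x-1$; thus $\tau^{-1}=\tau+1$, $1-\tau=\tau^2$, and $2\tau-1=\tau^3$. So the whole task reduces to producing $f,g\in F_\tau$ and a point $s$ with $s<f(s)\le g(s)<f(g(s))=g(f(s))$ for which the rotation number of $f$ modulo $g$ at $s$ is irrational.

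For $g$ I would take the golden-ratio analogue of Thompson's generator $x_0$: the element of $F_\tau$ supported on all of $(0,1)$, with a single breakpoint, acting with slope $\tau^{-1}$ on $[0,1-\tau]$ and with slope $\tau$ on $[1-\tau,1]$. Then $g(x)>x$ on $(0,1)$ and $g(1-\tau)=\tau$, so with $s=1-\tau$ the interval $[s,g(s))=[1-\tau,\tau)$ is a fundamental domain for the action of $g$ on its orbital $(0,1)$ and carries the combinatorial circle structure used in the definition. The remaining work is to choose $f\in F_\tau$, supported in $(0,1)$, so that (i) $f(g(s))=g(f(s))$, so that $x\mapsto g^{-m}(f(x))$ descends to an honest homeomorphism $h$ of the circle $[s,g(s))$; (ii) all breakpoints of $f$ lie in $\Z[\tau]$ and all slopes are integral powers of $\tau$; and (iii) $h$ has no periodic orbit. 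For (iii) the natural device is to build $f$ so that the orbits of $h$ have the same cyclic order as the orbits of an irrational circle rotation --- for concreteness, the rotation through $\tau$ --- whose itinerary is the Fibonacci (Sturmian) word and can be tracked by a piecewise $\tau$-linear map precisely because $\tau$ is a unit in $\Z[\tau]$ with $\tau^2+\tau=1$. A circle homeomorphism whose point-orbits carry the cyclic order of an irrational rotation has that (irrational) rotation number, so (iii) forces the rotation number of $f$ modulo $g$ at $s$ to be irrational; hence $(f,g)$ is an $F$-obstruction in $F_\tau$, and Theorem~\ref{main_result} finishes the proof. (Note that $\langle f,g\rangle$ has the single orbital $(0,1)$, so only the single-orbital case of the main theorem is needed here.)

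I expect step (iii) to be the main obstacle: one must arrange the piecewise-$\tau$-linear bookkeeping so that $f$ is a genuine element of $F_\tau$ meeting the matching identity at $s$, while certifying that the induced circle map $h$ is fixed-point-free on all of its powers. This is exactly where the arithmetic of $\tau$ is essential. By the theorem of Ghys and Sergiescu quoted in the introduction, the analogous ``modulo'' construction carried out with dyadic data --- that is, inside Thompson's group $F$ --- can only ever produce rational rotation numbers; the relations $\tau^2+\tau=1$ and $\tau^{-1}=\tau+1$ are what allow a product of $\tau$-power slopes around the circle to balance to $1$ while the combinatorial type winds around at an irrational rate. An alternative to writing $f$ down explicitly is to borrow such a circle homeomorphism from the literature on the irrational-slope Thompson groups and present it in the form $f$ modulo $g$. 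Either way, once the obstruction has been exhibited the corollary is immediate from Theorem~\ref{main_result}.
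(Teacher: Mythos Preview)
Your overall strategy matches the paper's: exhibit an $F$-obstruction in $F_\tau$ and invoke Theorem~\ref{main_result}. However, you never actually construct the element $f$; you only list properties it should have and concede that verifying (iii) is ``the main obstacle.'' Since producing the obstruction is the entire content of the corollary, the proposal as written has a genuine gap.

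The paper's construction is far simpler than the route you sketch, and the simplification comes from a different choice of $g$. Your $g$ has a single breakpoint and is nowhere a translation on its orbital, so the induced circle map can never be a rotation and you are forced into a combinatorial analysis of orbit types. The paper instead chooses \emph{both} $f$ and $g$ to act as pure translations on a common interval containing $[s,sg]$. By Observation~\ref{trans_obs}, the induced circle map is then literally a rotation, and its rotation number is simply the ratio of the two translation lengths. Concretely (in the paper's convention $\tau>1$, $\tau^2=\tau+1$), one takes $s=\tau^{-3}$ and arranges that $f$ and $g$ translate by $\tau^{-2}-\tau^{-3}$ and $\tau^{-2}-\tau^{-4}$ respectively on $[s,sg]\subseteq[\tau^{-3},\tau^{-1}]$; the rotation number is then
\[
\frac{\tau^{-2}-\tau^{-3}}{\tau^{-2}-\tau^{-4}}=\frac{\tau^2-\tau}{\tau^2-1}=\tau^{-1},
\]
which is irrational. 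No analysis of cyclic order of orbits, and no appeal to the literature on irrational-slope Thompson groups, is needed.

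A minor point: you take $\tau=(\sqrt5-1)/2$ satisfying $\tau^2+\tau=1$, while the paper uses $\tau=(1+\sqrt5)/2$ satisfying $\tau^2=\tau+1$. These are reciprocals and define the same group, but keep the convention straight when comparing.
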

\noindent
Theorem \ref{main_result} also gives a new proof of the following result first proved by Lodha
using his theory of coherent actions.

\begin{cor} \cite{coherent_actions} \label{Fpq}
Stein's groups $F_{p,q}$ do not embed into $F$ if $p,q$ are relatively prime natural numbers.
\end{cor}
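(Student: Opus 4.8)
The plan is to derive Corollary~\ref{Fpq} from Theorem~\ref{main_result} by exhibiting an explicit $F$-obstruction inside $F_{p,q}$. Recall (see Section~\ref{examples:sec}) that, after conjugating by an affine homeomorphism, we may view $F_{p,q}$ as the group of those elements of $\PLoI$ whose breakpoints lie in $\Z[1/(pq)]$ and whose slopes lie in the multiplicative subgroup $\gen{p,q}$ of $\Q_{>0}$. Since $p$ and $q$ are coprime they are distinct, and we may assume $2 \le p < q$. By Theorem~\ref{main_result} it is enough to produce $f,g \in F_{p,q}$ and a point $s$ realizing the first clause in the definition of an $F$-obstruction.

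First I would fix $0 < c < d < 1$ in $\Z[1/(pq)]$ and a small $\epsilon \in (0, d-c)$, and take $g$ to be supported on the single interval $(0,d)$ and $f$ to be supported on the single interval $(c,d)$, arranged so that on the common neighbourhood $(d-\epsilon, d)$ of $d$ the two maps are the dilations
\[
g(x) = d - q^{-1}(d-x), \qquad f(x) = d - p^{-1}(d-x).
\]
Being dilations with the common centre $d$, these commute on $(d-\epsilon,d)$, so that $f(g(s)) = g(f(s))$ whenever $s, g(s) \in (d-\epsilon,d)$. The remainder of $f$ and $g$ is then filled in inside their respective orbitals by piecewise linear maps with slopes in $\gen{p,q}$ and breakpoints in $\Z[1/(pq)]$, which is routine. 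Fixing any $s$ with $d-\epsilon < s < d$, one reads off directly from the displayed formulas and the inequalities $1 < p < q$ that
\[
s < f(s) \le g(s) < f(g(s)) = g(f(s)),
\]
and that $g(s)$ and $f(g(s))$ still lie in $(d-\epsilon, d)$; in particular $f$ maps $[s,g(s))$ into the orbital $(0,d)$ of $g$, so the relevant circle map is defined.

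The main step is to compute the rotation number of $f$ modulo $g$ at $s$ and to see it is irrational. Let $\theta$ be the coordinate on the orbital $(0,d)$ of $g$ which conjugates $g$ to the translation $t \mapsto t+1$ and satisfies $\theta(s) = 0$; near $d$ one has $\theta(x) = \log_q \frac{d-s}{d-x}$, so $\theta$ carries the fundamental domain $[s,g(s)]$ onto $[0,1]$. Since $[s,g(s)]$ lies in the zone where $f$ is the dilation $x \mapsto d - p^{-1}(d-x)$, a direct substitution gives $\theta \circ f \circ \theta^{-1}(t) = t + \log_q p$ on $[0,1]$. Hence the lift of the circle homeomorphism $x \mapsto g^{-m}(f(x))$ of $[s,g(s))$ obtained by extending $f|_{[s,g(s))}$ $g$-equivariantly becomes, in the $\theta$-coordinate, the translation by $\log_q p$; therefore the rotation number of $f$ modulo $g$ at $s$ equals $\log_q p = \log p / \log q$, which lies in $(0,1)$ since $p < q$. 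This number is irrational: if $\log p/\log q = a/b$ in lowest terms then $p^b = q^a$ with $a,b \ge 1$, which is impossible since $p$ and $q$ are coprime and at least $2$. Thus $(f,g)$ is an $F$-obstruction contained in $F_{p,q}$, and Theorem~\ref{main_result} shows that $F_{p,q}$ does not embed into $F$.

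The step I expect to demand the most care is the passage through the coordinate $\theta$: it is logarithmic near the ends of the orbital of $g$, hence not piecewise linear, so the rotation number cannot simply be read off from the (piecewise linear) circle map. The construction is tailored to defeat exactly this difficulty — restricting $f$ to one fundamental domain of $g$, inside the zone where $f$ and $g$ are commuting dilations about a common centre, makes $\theta \circ f \circ \theta^{-1}$ an honest translation and so pins the rotation number to the precise value $\log p/\log q$ rather than to some merely irrational-looking approximation. A secondary but genuine task is checking that the interpolating parts of $f$ and $g$ can be chosen with all breakpoints in $\Z[1/(pq)]$ and all slopes in $\gen{p,q}$, so that the gadget really lives in $F_{p,q}$ and not merely in $\PLoI$.
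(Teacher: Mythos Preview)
Your proposal is correct and takes essentially the same approach as the paper: exhibit $f,g\in F_{p,q}$ that act as commuting dilations with ratios in $\gen{p,q}$ near a common fixed point, compute the rotation number of $f$ modulo $g$ there to be $\log_q p$, observe this is irrational because $p,q$ are coprime, and invoke Theorem~\ref{main_result}. The only cosmetic difference is that the paper places the common fixed point at $0$ (using elements of slope $p$ and $q$ near $0$) and packages your $\theta$-computation once and for all as Observation~\ref{ratio_obs}, whereas you put the fixed point at an interior right endpoint $d$ and rederive the computation by hand; neither choice affects the argument.
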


In the next corollary, we view $\PLoI$ as consisting of functions from $\R$ to $\R$ by defining its elements
to be the identity outside of $I$.
Here $F^{t \mapsto t -\xi}$ is the set of conjugates of elements of $F$ by $t \mapsto t -\xi$.

\begin{cor} \label{trans_F}
If $0 < \xi < 1$ is irrational, then $\gen{F \cup F^{t \mapsto t-\xi}}$ does not embed into $F$. 
\end{cor}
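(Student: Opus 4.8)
The plan is to exhibit a single $F$-obstruction inside $H:=\gen{F\cup F^{t\mapsto t-\xi}}$ and then quote Theorem~\ref{main_result}. First I would note that conjugating by the affine map $t\mapsto t/(1+\xi)$ carries $H$ onto a subgroup of $\PLoI$ and, being a conjugation, preserves rotation numbers and hence the property of being an $F$-obstruction; so one may assume $H\le\PLoI$, and by the final clause of Theorem~\ref{main_result} it is enough to find one $F$-obstruction in $H$.

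For the generators I would fix $h\in F$ with $\supt(h)=(0,1)$ and $h(x)>x$ on $(0,1)$ and set $g:=h^{2}$, $g_{*}:=h$; then $g,g_{*}\in F\le H$ both have orbital $(0,1)$ and $g_{*}^{2}=g$. I would then pick $w\in F$ and put $f:=g_{*}\circ w^{t\mapsto t-\xi}\in H$. Since $\supt(w^{t\mapsto t-\xi})\subseteq(\xi,1+\xi)$ is bounded away from $0$, $f$ agrees with $g_{*}$ on $(0,\xi)$, so for $s\in(0,\xi)$ small enough that $g_{*}^{3}(s)<\xi$ one has
$$
s<f(s)=g_{*}(s)<g(s)=g_{*}^{2}(s)<g_{*}^{3}(s)=f(g(s))=g(f(s)),
$$
which is exactly the first clause in the definition of an $F$-obstruction at $s$ --- \emph{provided} the rotation number of $f$ modulo $g$ at $s$, namely that of the induced circle homeomorphism $\bar f$ on $[s,g(s))$, is irrational. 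The remaining task, and the whole content of the corollary, is to choose $w$ so that this is so.

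Here the point of the translate $w^{t\mapsto t-\xi}$ is decisive: conjugating a dyadic-PL map by the irrational translation $t\mapsto t-\xi$ yields a PL map whose breakpoints lie in $\xi+\Z[\tfrac12]$, and by transporting these irrational breakpoints around the orbital $(0,1)$ under $g$ one can arrange that $\bar f$ is a PL circle homeomorphism with finitely many breaks that is monotonically semiconjugate to an irrational rotation; by Poincar\'e's theorem this is the same as saying its rotation number is irrational. It is worth noting that one cannot instead take $f$ to commute with $g$: a germ computation at the endpoints of $(0,1)$ shows that any dyadic-slope element of $H$ commuting with $g$ has rotation number modulo $g$ in $\Q$, so the irrationality genuinely needs the ``Denjoy-type'' behaviour available for PL --- but not $C^{2}$ --- circle maps. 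A convenient way to pin down what has to be checked: if the rotation number of $\bar f$ were $p/q\in\Q$ then $\bar f^{\,q}$ would fix some $z$, and unwinding the definition of $\bar f$ turns $\bar f^{\,q}(z)=z$ into an identity among finitely many iterated compositions of $f^{\pm1}$ and $g^{\pm1}$ applied to $z$; since every linear piece of $g$ has coefficients in $\Q$ while every linear piece of $f$ has coefficients that are affine in $\xi$ over $\Q$, this collapses to an affine equation in $\xi$ with rational coefficients, which the construction must make nontrivial.

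The hard part is precisely that last step. Because $\xi$ is an \emph{arbitrary} irrational --- possibly algebraic --- it will not do merely to produce some polynomial relation satisfied by $\xi$; the dependence on $\xi$ has to stay genuinely linear, which is what constrains the choice of $w$ and forces $\xi\in\Q$, the desired contradiction. By comparison the surrounding verifications are routine: that $f$ is a well-defined element of $\PLoI$ lying in $H$, that it is compatible across the fundamental domains of $g$, and that the displayed inequalities hold for small $s$. Granting the construction, $(f,g)$ is an $F$-obstruction in $H\le\PLoI$, so Theorem~\ref{main_result} yields that $\gen{F\cup F^{t\mapsto t-\xi}}$ does not embed into $F$. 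The conceptual content throughout is the clash between the dyadic rigidity of $F$ and the irrational translation $t\mapsto t-\xi$, which is what manufactures an irrational rotation number in a setting where $F$ alone permits only rational ones.
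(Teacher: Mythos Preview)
Your proposal has a genuine gap: you never actually construct $w$, and you never establish that the rotation number of $f$ modulo $g$ is irrational. You correctly identify this as ``the hard part'' and ``the whole content of the corollary,'' but then the argument stops at the point where it should begin. The discussion of what would happen if the rotation number were $p/q$ --- reducing to ``an affine equation in $\xi$ with rational coefficients, which the construction must make nontrivial'' --- is speculative: iterating PL maps whose linear pieces have coefficients that are themselves affine in $\xi$ does not in general produce an expression that is affine in $\xi$, and you give no mechanism to control this. So as written there is no proof, only a plan whose crucial step is left open.

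There is also a sign slip: with the paper's convention $\supt(w^{t\mapsto t-\xi})=\supt(w)-\xi\subseteq(-\xi,1-\xi)$, not $(\xi,1+\xi)$. This matters because it breaks your claim that $f$ automatically agrees with $g_*$ on $(0,\xi)$; you would need to restrict the support of $w$ further, which is possible but not what you wrote.

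By contrast, the paper sidesteps the entire difficulty of computing a rotation number for a general PL circle map. It writes down explicit elements $f\in F^{t\mapsto t-\xi}$ and $g=g_0g_1$ with $g_0\in F$, $g_1\in F^{t\mapsto t-\xi}$, chosen so that on a common interval $[0,(1-\xi)/2]$ both act as \emph{pure translations}: $xf=x+2^{-n}$ and $xg=x+(1-\xi)/2$. Observation~\ref{trans_obs} then gives the rotation number immediately as the ratio $2^{-n+1}/(1-\xi)$, which is irrational precisely because $\xi$ is. The point is that once $f$ and $g$ are simultaneous translations on a fundamental domain, the induced circle map is itself a rotation and no Denjoy-type analysis is needed at all.
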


In the course of proving Theorem \ref{main_result}, we will also establish the following results.
An $F$-obstruction is \emph{basic} if the group it generates has connected support.

\begin{thm} \label{top_conj}
If two basic $F$-obstructions generate isomorphic groups,
then the groups are topologically conjugate via a
homeomorphism of their supports.
\end{thm}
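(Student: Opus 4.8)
The plan is to deduce Theorem~\ref{top_conj} from the dichotomy, Theorem~\ref{semiconj_dichot}, used in the guise of a reconstruction theorem. Write $G_i=\gen{f_i,g_i}$ and let $J_i=\supt(G_i)$; since $(f_i,g_i)$ is basic, $J_i$ is an interval and the unique orbital of $G_i$, and $G_i$ acts faithfully as a group of homeomorphisms of $J_i$. Fix an isomorphism $\phi\colon G_1\to G_2$. For each $i$, after possibly replacing $(f_i,g_i)$ by $(f_i^{-1},g_i^{-1})$, there is $s_i\in J_i$ with $s_i<f_i(s_i)\le g_i(s_i)<f_i(g_i(s_i))=g_i(f_i(s_i))$ such that the induced piecewise-linear circle homeomorphism $\rho_i$ of $C_i:=[s_i,g_i(s_i))$, $x\mapsto g_i^{-m}f_i(x)$, has irrational rotation number.

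The first, and main, step is to prove that $J_i$ is a \emph{resolvable} orbital of $G_i$; this is the only point at which the irrationality of the rotation number enters. By Poincar\'e's theorem $\rho_i$ is semiconjugate to the rotation determined by its rotation number, and a Denjoy-type argument exploiting that $\rho_i$ is piecewise-linear with only finitely many breakpoints shows that it has no wandering intervals, hence is minimal; in particular every $\gen{\rho_i}$-orbit is dense in $C_i$, $\rho_i$ is recurrent (its iterates along the denominators of the continued-fraction expansion of its rotation number converge uniformly to the identity), and, conjugating by powers of $g_i$ whose orbits sweep across $J_i$, every $G_i$-orbit in $J_i$ is dense. Feeding this recurrence into the standard nested-support commutator construction, together with the piecewise-linear structure, one then produces for every open subinterval $U\subseteq J_i$ a nontrivial element of $G_i$ supported inside $U$; that is, $\{\supt(g)\cap J_i:g\in G_i\}$ is a base for the topology of $J_i$.

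Granting resolvability for $i=1,2$, I would apply the dichotomy via a diagonal embedding. Put a copy of the $G_1$-action on $J_1$ in one of two disjoint closed subintervals of $I$ and a copy of the $G_2$-action on $J_2$ in the other, and identify the two groups via $\phi$; this yields a single $G\le\PLoI$ acting on $J_1\sqcup J_2$ for which $J_1$ is a resolvable orbital and $J_2$ is an orbital. Apply Theorem~\ref{semiconj_dichot} with this $G$, $J:=J_1$, $n=1$, and $K_0:=J_2$. Alternative~\ref{kernel} would give an element of $G$ acting nontrivially on $J_1$ but trivially on $J_2$, which is impossible because both coordinate actions are faithful; so alternative~\ref{semiconj} holds: there is a monotone $G$-equivariant surjection $\psi\colon J_2\to J_1$, i.e.\ $\psi(\phi(a)\cdot y)=a\cdot\psi(y)$ for all $a\in G_1$ and $y\in J_2$.

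It remains to see that $\psi$ is injective, for then, being a monotone continuous surjection between intervals, it is a homeomorphism, and $h:=\psi^{-1}\colon J_1\to J_2$ satisfies $h\,a\,h^{-1}=\phi(a)$ for all $a\in G_1$, i.e.\ $h$ is a homeomorphism of $\supt(G_1)$ onto $\supt(G_2)$ conjugating $G_1$ onto $G_2$. Suppose instead that $\psi^{-1}(c)=[p,q]$ is nondegenerate for some $c\in J_1$. By resolvability of $J_2$ there is a nontrivial $b\in G_2$ with $\supt(b)\subseteq(p,q)$; writing $b=\phi(a)$ with $a\neq\id$, and noting that $b$ fixes $J_2\setminus[p,q]$ while $\psi$ maps $J_2\setminus[p,q]$ onto $J_1\setminus\{c\}$, equivariance forces $a$ to fix $J_1\setminus\{c\}$, hence all of $J_1$, contradicting faithfulness of the $G_1$-action on $J_1$. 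The whole argument thus hinges on the resolvability claim of the second paragraph --- extracting small-support elements of $\gen{f_i,g_i}$ from the irrational rotation number --- which I expect to be the main obstacle; once it is in place, the reconstruction step is routine.
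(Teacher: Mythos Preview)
Your approach is the paper's: resolvability of each $J_i$ is Proposition~\ref{dense_bumps} (proved via the technical Lemma~\ref{remain_long}, which lifts the minimal circle dynamics of $\gamma$ back to elements of $\gen{f,g}$ and is indeed where the real work lies), and the reconstruction step via the diagonal embedding is exactly how Corollary~\ref{Rubin_cor} is derived from Theorem~\ref{semiconj_dichot}, after which Theorem~\ref{top_conj} is stated as an immediate consequence. The paper argues injectivity by running the dichotomy in the reverse direction and using that a resolvable orbital admits no nontrivial $G$-equivariant self-map, rather than your direct fiber argument, but your variant is correct and arguably cleaner.
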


\begin{thm} \label{F_embed}
If $(f,g)$ is an $F$-obstruction, then $F$ embeds into $\gen{f,g}$.
\end{thm}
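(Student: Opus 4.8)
The plan is to locate, inside $G := \gen{f,g}$, two elements supported in a single orbital $J$ of $G$ whose supports are intervals that overlap properly --- each meeting the complement of the other --- with matching orientation on the overlap. A group generated by two such elements contains a copy of Thompson's group $F$; this is a recognition criterion underlying Brin's Ubiquity Theorem \cite{ubiquity}, and, as noted in the introduction, it also follows from Theorem~\ref{semiconj_dichot}. Since restriction to $J$ is faithful on the subgroup of those elements of $G$ whose support is contained in $J$, this pair will generate a genuine copy of $F$ inside $G$.

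First I would fix the dynamics. We may assume that the first alternative of the definition holds, with witness $s$; the second alternative is reduced to the first by passing to $(f^{-1},g^{-1})$ with witness $f(g(s))$. Writing $t := f(g(s)) = g(f(s))$, we then have $s < f(s) \leq g(s) < t$, and the circle homeomorphism $h \colon x \mapsto g^{-m}(f(x))$ of $[s,g(s))$ has irrational rotation number. Note that in fact $f(s) < g(s)$: if $f(s) = g(s)$ then $h(s) = g^{-1}(g(s)) = s$, so the rotation number of $h$ would be $0$. Let $J$ be the orbital of $G$ containing $s$; then $f(s), g(s), t \in J$ and everything that follows takes place inside $J$.

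The heart of the matter is to convert the irrationality of the rotation number of $h$ into genuine elements of $G$ with small support. Because the rotation number is irrational, $h$ has no periodic points and its orbits accumulate --- in particular the $h$-orbit of $s$ returns arbitrarily close to $s$ from both sides. Reading this back through the definition of $h$: taking the winding number $N$ from the continued-fraction denominators of the rotation number, the words $g^{-M}f^{N}$ in $G$ displace $s$ by an arbitrarily small amount while nearly fixing neighbourhoods of the endpoints of $[s,g(s))$, and from these one isolates an element of $G$ whose support within $J$ is a single arbitrarily small interval about $s$ (this simultaneously shows that $J$ is a resolvable orbital of $G$). A small further manipulation --- running the construction at two nearby scales, and if need be conjugating by a short word in $f$ and $g$ --- yields two elements $a,b \in G$, both supported in $J$, whose supports are properly overlapping intervals with matching orientation. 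Applying the recognition criterion above to $a$ and $b$ gives $F \hookrightarrow \gen{a,b} \leq G$, and the proof is complete.

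I expect the step just described --- manufacturing, from the purely dynamical hypothesis, an honest element of $G$ supported on an arbitrarily small interval inside $J$ --- to be the main obstacle; the rest is bookkeeping. Irrationality of the rotation number is genuinely needed and not just the chain of inequalities: for a rational rotation number some power of $h$ has a fixed point, the inequalities are compatible with $f$ and $g$ commuting, and $\gen{f,g}$ may then be free abelian and contain no copy of $F$.
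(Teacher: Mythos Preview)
Your overall plan matches the paper's: establish that the orbital $J$ containing the witness $s$ is resolvable (Proposition~\ref{dense_bumps}), and then invoke Brin's Ubiquity Theorem. You also correctly flag the construction of small-support elements as the crux. But your proposed mechanism for that step has a genuine gap. The words $g^{-M}f^{N}$ with $N/M$ a convergent of the rotation number do displace $s$ by an arbitrarily small amount, but small displacement is not small support: such a word typically has support in $J$ equal to all of $J$ (it contains the full $f$- and $g$-orbitals of $s$), and being uniformly close to the identity on $[s,sg)$ does not manufacture fixed points anywhere. You give no indication of how to pass from ``moves $s$ a little'' to ``supported on a small interval'', and I do not see a direct route. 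The paper's mechanism is quite different: it takes $[f,g]$ as a seed --- nontrivial by Lemma~\ref{notcommute}, and already compactly supported in $\supt\gen{f,g}$ by Lemma~\ref{com_supt} --- and then uses the circle dynamics to control \emph{conjugates} of $[f,g]$. The key technical ingredient is Lemma~\ref{remain_long}, a uniform estimate (via Denjoy's theorem) showing that for short enough intervals $[x,y]\subseteq[s,sg)$ one can realise the circle iterate $\gamma^n$ on $[x,y]$ by an honest element of $\gen{f,g}$; density of the $\gamma$-orbit of the left endpoint of $\supt([f,g])$ then yields bumps with a dense set of possible left (and, dually, right) endpoints, and these are patched into arbitrary bumps. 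The irrationality is used to make orbits dense and to exploit equicontinuity of $\{\gamma^n\}$, not to make group elements close to the identity.

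There is also a smaller slip in the non-basic case. You ask for $a,b$ ``supported in a single orbital $J$'', and later invoke faithfulness of restriction to $J$; but words in $f,g$ will in general have support in the other orbitals of $\gen{f,g}$ as well, and nothing you do removes it. The paper's fix is to apply Brin's Ubiquity not to $\gen{f,g}$ and $J$ but to $H:=\gen{h_0,h_1}$ and its orbital $(a_0,b_1)\subseteq J$, where $h_0,h_1$ are elements whose supports \emph{intersected with $J$} are properly overlapping intervals $(a_0,b_0)$, $(a_1,b_1)$: in that orbital $h_0$ approaches one end but not the other, and whatever $h_0,h_1$ do outside $J$ is irrelevant to the hypothesis.
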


\noindent
Theorem \ref{F_embed} generalizes a result of Bleak \cite[\S3.3]{alg_class}
which asserts that if $G \leq \PLoI$ and the left or right
group of germs at some point is nondiscrete, then $F$ embeds into $G$.

We conjecture that the converse to Theorem \ref{main_result} holds for finitely generated groups.

\begin{conj}
If $G \leq \PLoI$ is finitely generated and
does not contain an $F$-obstruction, then $G$ embeds into $F$.
\end{conj}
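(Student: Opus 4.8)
Since the statement is a conjecture, I describe the line of attack I would pursue and the point at which I expect it to stall.

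\emph{Reductions.} Because each element of $\PLoI$ has only finitely many breakpoints, it acts nontrivially on only finitely many orbitals of any $G\le\PLoI$, so the restriction maps assemble into an embedding $G\hookrightarrow\bigoplus_{J}G|_J$, with $J$ ranging over the orbitals of $G$. Each $G|_J$ has a single orbital, is finitely generated when $G$ is, and contains no $F$-obstruction: an $F$-obstruction in $G|_J$ would lift to one in $G$, since the witnessing configuration and the relevant rotation number are computed inside the $G$-invariant interval $J$. As $\bigoplus_J F$ embeds into $F$ via copies of $F$ supported on pairwise disjoint dyadic subintervals, the conjecture reduces to the case in which $G$ has a single orbital, which --- after restricting to the support and rescaling --- we may take to be $(0,1)$; it then suffices to produce $\phi\in\HomeoI$ with $\phi G\phi^{-1}\le F$, for then $G\cong\phi G\phi^{-1}\le F$. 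So the content is: a finitely generated $G\le\PLoI$ with a single orbital and no $F$-obstruction is topologically conjugate into $F$.

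\emph{The core.} Unwinding the definition, the no-$F$-obstruction hypothesis says that no overlapping pair in $G$ exhibits an irrational rotation number; since elements of $\PLoI$ are linear near each endpoint of each of their orbitals, this yields a local normal form. For every finitely generated $H\le G$ and every endpoint $x$ of an orbital of $H$, the germ group of $H$ at $x$ --- a finitely generated subgroup of $\R_{>0}$ --- must be cyclic, since otherwise it contains $1<\lambda\le\mu$ with $\log\lambda/\log\mu$ irrational, and then two elements of $H$ realizing these germs, together with a point close enough to $x$, form an $F$-obstruction whose rotation number is $\log\lambda/\log\mu\bmod1$ (compare Bleak's observation recalled after Theorem \ref{F_embed}). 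The plan is to promote this local rigidity to a global normal form. I would use Theorem \ref{semiconj_dichot} to organize the orbitals of the finitely generated subgroups of $G$ together with the $G$-equivariant monotone surjections among them, use the rationality of the rotation numbers to align consecutive links of each transition chain, and so build a $G$-invariant, order-dense subset $D\subseteq(0,1)$ --- on which $G$ acts --- carrying the combinatorics of the standard dyadic subdivision of $(0,1)$. An order isomorphism $D\to\Z[1/2]\cap(0,1)$ would then extend to a homeomorphism $\phi$ conjugating $G$ onto a group of piecewise linear homeomorphisms respecting that subdivision, and a final slope computation --- again fed by the cyclicity of the germ groups and the rationality of the rotation numbers --- should place the image inside $F$, or at worst inside a conjugate of $F$ that is itself conjugate into $F$. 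Theorem \ref{top_conj}, and more importantly its method of proof, is the natural tool for checking that the locally defined pieces of $\phi$ agree on overlaps.

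\emph{The main obstacle.} The difficult part is precisely this globalization. The hypothesis is pairwise and local, whereas $D$ must be compatible simultaneously with all orbitals of all finitely generated subgroups, with every $G$-equivariant surjection furnished by Theorem \ref{semiconj_dichot}, and with every transition chain; forcing finitely many rational-rotation-number constraints to cohere into one dyadic combinatorial scheme, and then matching the resulting slopes to powers of $2$, is the kind of bookkeeping that in neighbouring problems --- Lodha's theory of coherent actions, or the finer study of subgroups of $F$ --- has required genuinely new invariants. I expect one will need either such an invariant or a strengthening of Theorem \ref{semiconj_dichot} that additionally records rotation numbers along the equivariant surjections it produces. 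Finite generation is used essentially at two places: it is what upgrades ``every pair of germs has rational logarithm ratio'' to ``the germ group is cyclic'', and it is what allows an inductive or compactness argument to extract a single $\phi$ rather than an incoherent family of partial ones; without it one expects counterexamples, for instance among groups whose germ at an orbital endpoint is a dense subgroup of $\R_{>0}$ in which any two elements have rational logarithm ratio, such as $\{2^{q}:q\in\Q\}$, a configuration that by itself never produces an $F$-obstruction.
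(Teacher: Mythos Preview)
The statement is an open conjecture in the paper; no proof is offered, only the remark that it would imply a dichotomy asked about by Brin. You correctly recognize this and present a strategy sketch rather than a proof, so there is no ``paper's proof'' to compare against.

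Your reduction to the single-orbital case is sound: a finitely generated $G \leq \PLoI$ has only finitely many orbitals, the diagonal $G \hookrightarrow \prod_J G|_J$ is injective, each factor is finitely generated and inherits the no-$F$-obstruction hypothesis, and finite direct powers of $F$ embed in $F$. Your deduction that the germ group at each orbital endpoint of each finitely generated subgroup must be cyclic is also correct and follows exactly from Observation~\ref{ratio_obs}; this is the mechanism behind Bleak's result cited after Theorem~\ref{F_embed}.

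Two caveats on the sketch. First, you strengthen the goal to producing a topological conjugacy $\phi G \phi^{-1} \leq F$, whereas the conjecture asks only for an abstract embedding; Rubin-type rigidity (Corollary~\ref{Rubin_cor}) only identifies the two when \emph{both} sides act locally densely, which the image in $F$ need not, so you may be aiming at strictly more than is required. Second, and more substantively, Theorem~\ref{semiconj_dichot} requires the distinguished orbital $J$ to be \emph{resolvable}, and a single-orbital $G \leq \PLoI$ need not have this property (take $G$ abelian, or more generally any $G$ in which no nontrivial element has support compactly contained in the orbital). Your plan to ``use Theorem~\ref{semiconj_dichot} to organize the orbitals'' does not address how to proceed when its hypothesis fails, so the difficulty begins somewhat earlier than your ``main obstacle'' paragraph locates it. That said, your identification of the globalization step as the heart of the matter, and your observation that finite generation is essential --- with the germ group $\{2^q : q \in \Q\}$ as a cautionary example --- are both on target.
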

\noindent
Notice that this conjecture implies every finitely generated subgroup of $\PLoI$
either contains a copy of $F$ or else embeds into $F$;
whether such a dichotomy holds was asked by Matthew Brin.

This paper is organized as follows.
After recalling some terminology and notation in Section \ref{notn:sec} and establishing that $F$ does not contain $F$-obstructions in
Section \ref{F_obs_in_F:sec}, 
we will prove Theorem \ref{F_embed} in Section \ref{F_embed:sec}.
Section \ref{resolvable:sec} proves Theorem \ref{semiconj_dichot}.
Section \ref{main:sec} uses Theorem \ref{semiconj_dichot} to complete the proofs of
Theorems \ref{main_result} and \ref{top_conj}.
It also illustrates how Theorem \ref{semiconj_dichot} can be used to derive
Brin's Ubiquity Theorem and Rubin's Theorem for subgroups of $\PLoI$.
Finally, the computations needed for Corollaries \ref{Ftau}--\ref{trans_F}
are presented in Section \ref{examples:sec}.

\section{Preliminaries}
\label{notn:sec}

Throughout the paper, counting will start at 0 and $i,j,k,l,m,n$ will only be used to denote integers.
If $A$ and $B$ are subsets of an ordered set, we will sometimes write $A \leq B$ to indicate
that every element of $A$ is less than every element of $B$.

As already mentioned, $\HomeoI$ is the collection of orientation preserving homeomorphisms
of the unit interval $I:=[0,1]$.
$\HomeoI$ is a group with the operation of composition.
$\PLoI$ is the subgroup of $\HomeoI$ consisting of those elements which are piecewise linear.
If $f \in \PLoI$, we say that $s$ is a \emph{breakpoint} of $f$ if the derivative of $f$ at $s$ is
undefined.
If $s$ is not a breakpoint of $f$, we will refer to $f'(s)$ as the \emph{slope} of $f$ at $s$.
Thompson's group $F$ consists of those elements of $\PLoI$ whose slopes are integer powers
of $2$ and whose breakpoints are in $\Z[\frac{1}{2}]$.
When there is a need to emphasize that we are working with this particular group and not an isomorphic
copy, we will refer to it as the \emph{standard model of $F$}.
The reader is referred to \cite{CFP} for the basic analysis of Thompson's group $F$ and
\cite{PLoI} for background on $\PLoI$.

Going forward, we will adopt the convention common in the literature that elements of $\HomeoI$ act on $I$ from the right.
Thus we will write $xf$ for the application of $f \in \HomeoI$ to $x \in I$.
If $f \in \HomeoI$, then the \emph{support} of $f$ is defined to be
$$
\supt(f) := \{ x \in I \mid xf \ne x\}.
$$
If $A \subseteq \HomeoI$, then the support of $A$ is defined to be
$$\supt A := \{x \in I \mid \exists g \in A\ (xg \ne x) \} = \bigcup \{\supt(g) \mid g \in A\}.$$
Notice that $\supt A = \supt \gen{A}$.
We will write $\overline{\supt} A$ for the closure of $\supt A$.
A connected component of the support of $f$ is an \emph{orbital} of $f$;
similarly one defines the orbital of a subgroup of $\HomeoI$.
If $f$ has a single orbital, we will say that $f$ is a \emph{bump}.
If $f$ is a bump and $s f > s$ for some (equivalently all) $s$ in its support, then we say $f$
is a \emph{positive bump}; otherwise $f$ is a \emph{negative bump}.

If $f \in \HomeoI$ and $X \subseteq I$ is a union of orbitals and fixed points of $f$,
then $f |_X \in \HomeoI$ is defined by
$$s f |_X := 
\begin{cases}
sf & \textrm{ if } s \in X \\
s & \textrm{ otherwise}.
\end{cases}
$$
This map will be referred to as the \emph{projection to $X$}.
If $G \leq \HomeoI$ and $X$ is a union of orbitals and fixed points of $G$,
then the \emph{projection of $G$ to $X$} is
the image of $G$ under the homomorphism $f \mapsto f |_X$;
we will sometimes use ``the projection of $G$ to $X$'' to refer to the homomorphism itself.

If $f,g$ are elements of a group $G$, define
$g^f := f^{-1} g f$ and
$[f,g] := f^{-1}g^{-1} f g = f^{-1} f^g = (g^{-1})^f g$.
It is easily checked that if $f,g \in \PLoI$, then $\supt(f^g) = \supt(f)g$.
If $A$ and $B$ are sets of group elements, we will write $[A,B]$ for $\{[a,b] \mid a \in A \mand b \in B\}$.
The subgroup of $G$ generated by $[G,G]$ is denoted $G'$.
If $G = G'$, then we say that $G$ is \emph{perfect}.
If $G,H \leq \PLoI$, we will say that \emph{$G$ commutes with $H$} if every element of $G$ commutes
with every element of $H$.

We finish this section with some well known results
which will be needed later in the paper.

\begin{prop} (see \cite{CFP}) \label{fastF}
If $a$ and $b$ are elements of a group such that $[a^b,b^a] = [a^{ba^{-1}},b^{ab^{-1}}]$ is the identity
but $ab\ne ba$, then $\gen{a,b}$ is isomorphic to Thompson's group $F$.\footnote{The
standard presentation of $F$ is $\gen{x_0,x_1 \mid [x_0 x_1^{-1},x_0^{-1} x_1 x_0], [x_0 x_1^{-1},x_0^{-2} x_1 x_0^2] }$.
The presentation stated in Proposition \ref{fastF} is obtained by the substitution $a := x_0 x_1^{-1}$ and $b := x_1^{-1}$.
The proposition follows from this and the fact that the only proper quotients of $F$ are abelian.}
In particular if $s_0 < s_1 < t_0 < t_1$ and $a_0,a_1 \in \HomeoI$ are such that $\supt(a_i) = (s_i,t_i)$ and
$t_0 a_1 \leq s_1 a_0$, then $\gen{a_0,a_1}$ is isomorphic to $F$.
\end{prop}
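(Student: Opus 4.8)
The plan is to deduce the ``in particular'' clause from the first assertion, which in turn I would extract from the standard finite presentation of $F$. I would begin with the two-generator presentation $F \cong \gen{x_0, x_1 \mid [x_0 x_1^{-1}, x_0^{-1} x_1 x_0],\ [x_0 x_1^{-1}, x_0^{-2} x_1 x_0^2]}$ recorded in \cite{CFP} and perform the invertible change of generators $a := x_0 x_1^{-1}$, $b := x_1^{-1}$, so that $x_1 = b^{-1}$ and $x_0 = a b^{-1}$. A direct computation in the free group on $\{a,b\}$ --- the one indicated in the footnote --- rewrites the two defining relators as conjugates of $[a^b, b^a]$ and $[a^{ba^{-1}}, b^{ab^{-1}}]$, respectively; for the first relator one uses $[a^b, b^a]^{b^{-1}} = [a, (b^a)^{b^{-1}}]$ together with $x_0^{-1} x_1 x_0 = ((b^a)^{b^{-1}})^{-1}$, and the second is the analogous computation with $x_0^{-2} x_1 x_0^2$. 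Hence $F \cong \gen{a, b \mid [a^b, b^a],\ [a^{ba^{-1}}, b^{ab^{-1}}]}$. Now, given elements $a, b$ of an arbitrary group satisfying both of these relations, the universal property of the presentation produces a surjection $\pi \colon F \to \gen{a, b}$ carrying the generators to $a$ and $b$; its kernel is a normal subgroup of $F$. Since $ab \neq ba$ the group $\gen{a,b} \cong F/\ker\pi$ is nonabelian, whereas every proper quotient of $F$ is abelian (a classical fact; see \cite{CFP}), so $\ker\pi$ is trivial and $\pi$ is an isomorphism.

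For the ``in particular'' assertion I would put $a := a_0$ and $b := a_1$ and verify the three hypotheses just used. That $a_0 a_1 \neq a_1 a_0$ is immediate, since otherwise $\supt(a_1^{a_0}) = \supt(a_1)a_0 = (s_1 a_0, t_1)$ would coincide with $\supt(a_1) = (s_1, t_1)$, which is impossible because $a_0$ fixes $t_1$ but moves $s_1 \in \supt(a_0)$. Using $\supt(h^g) = \supt(h)g$ and the facts that $a_1$ is the identity on $[0, s_1]$ and carries $(s_1, t_1)$ onto itself while $a_0$ is the identity on $[t_0, 1]$ and carries $(s_0, t_0)$ onto itself, one gets $\supt(a^b) = (s_0,\ t_0 a_1)$ and $\supt(b^a) = (s_1 a_0,\ t_1)$, which are disjoint precisely because $t_0 a_1 \leq s_1 a_0$; hence $[a^b, b^a] = \id$. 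For the last relation, $\supt(a^{ba^{-1}}) = \supt(a^b)\,a_0^{-1}$ is the image of $(s_0, t_0 a_1)$ under the increasing homeomorphism $a_0^{-1}$ (which fixes $s_0$), hence equals $(s_0,\ \eta)$ with $\eta := (t_0 a_1)a_0^{-1}$; likewise $\supt(b^{ab^{-1}}) = \supt(b^a)\,a_1^{-1} = (\zeta,\ t_1)$ with $\zeta := (s_1 a_0)a_1^{-1}$. Applying the monotone map $a_0^{-1}$ to the inequality $t_0 a_1 \leq s_1 a_0$ gives $\eta \leq s_1$, and applying $a_1^{-1}$ gives $t_0 \leq \zeta$, so $\eta \leq s_1 < t_0 \leq \zeta$; thus these two supports are disjoint and $[a^{ba^{-1}}, b^{ab^{-1}}] = \id$. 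The first part then gives $\gen{a_0, a_1} = \gen{a, b} \cong F$.

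The one genuinely delicate point I anticipate is this last verification. A crude estimate of the supports of the doubly conjugated generators produces bounds that overlap --- the support of $a^{ba^{-1}}$ can a priori extend past $t_0$, and $t_0 > s_1$ --- so one cannot argue coordinate by coordinate. The trick is to run the \emph{single} inequality $t_0 a_1 \leq s_1 a_0$ through the two monotone homeomorphisms $a_0^{-1}$ and $a_1^{-1}$, obtaining $\eta \leq s_1$ and $\zeta \geq t_0$ simultaneously. Everything in the abstract part is routine once the relator rewriting from the footnote is carried out and the standard description of the normal subgroups of $F$ is invoked.
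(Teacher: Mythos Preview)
Your argument is correct and follows precisely the route sketched in the paper's footnote: rewrite the standard $\gen{x_0,x_1}$ presentation via $a=x_0x_1^{-1}$, $b=x_1^{-1}$, then invoke that every proper quotient of $F$ is abelian. Your detailed verification of the ``in particular'' clause---computing $\supt(a^b)$, $\supt(b^a)$, $\supt(a^{ba^{-1}})$, $\supt(b^{ab^{-1}})$ and pushing the single inequality $t_0a_1\le s_1a_0$ through $a_0^{-1}$ and $a_1^{-1}$---is exactly what the paper leaves to the reader, and it is carried out correctly.
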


The next theorem is known as Brin's Ubiquity Theorem.
If $G \leq \PLoI$, $J$ is an orbital of $G$ and $g \in G$, we say $g$ \emph{approaches the left (right) end of $J$}
if the closure of $\supt(g) \cap J$ contains the left (right) endpoint of $J$.

\begin{thm} \label{ubiquity} \cite{ubiquity}
Suppose that $G \leq \PLoI$ and there is an orbital $J$ of $G$ such that some element of $G$
approaches one end of $J$ but not the other. 
Then there is a subgroup of $G$ isomorphic to $F$.
\end{thm}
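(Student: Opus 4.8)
The plan is to produce two bumps in $G$ with staggered supports and then apply Proposition \ref{fastF}. By conjugating $G$ by $t\mapsto 1-t$, which carries $\PLoI$ to itself and preserves the hypotheses, I may assume that the given element $g$ approaches the right endpoint $b$ of $J=(a,b)$ but not the left endpoint $a$. Since every element of $\PLoI$ has only finitely many orbitals, $g$ has a rightmost orbital meeting $J$, necessarily of the form $(c,b)$ with $a<c<b$ --- the right endpoint is $b$ because $g$ approaches $b$, and the left endpoint exceeds $a$ because $g$ does not approach $a$. Replacing $g$ by $g^{-1}$, I may take $g$ to be a positive bump there, and after a preliminary reduction I may assume $g$ is a genuine bump, so that $g$ is the identity on $(a,c]$.

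Because $J$ is an orbital of $G$, the point $c$ is moved by some element of $G$; and if $ch\ge c$ held for every $h\in G$ then, applying this to both $h$ and $h^{-1}$, every $h$ would fix $c$, which is absurd. So there is $h\in G$ with $ch<c$; let $(p,b_h)$ be the orbital of $h$ containing $c$, so that $a\le p<c<b_h\le b$ and $h$ is a negative bump on $(p,b_h)$. The structural fact that drives the argument is that the germs at $b$ of elements of $G$ form an abelian group: such an element fixes $b$, and its germ there is recorded by its one-sided slope. Hence $[g,h]$ has trivial germ at $b$ and is the identity on a left neighbourhood of $b$. On the other hand $[g,h]$ is nontrivial, since otherwise $h$ would normalize $\supt(g)=(c,b)$ and fix its endpoint $c$; moreover $\supt([g,h])=\supt(g^{-1}g^h)\subseteq (c,b)\cup(ch,b)=(ch,b)\subseteq J$; and a short direct computation (here the reduction to $g$ a bump is used) shows that $[g,h]$ has an orbital of the form $(ch,d')$ with $a<ch<c<d'<b$. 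Thus $G$ contains an element with a bounded orbital straddling $c$, and together with the orbital $(c,b)$ of $g$ this is a staggered configuration, $ch<c<d'<b$. Passing (see below) to a genuine positive bump $a_0$ supported in $(ch,d')$ and taking $a_1:=g^{-k}$ for large $k$ --- so that $a_1$ is a negative bump on $(c,b)$ contracting the overlap toward $c$, whence $d'a_1<ca_0$ --- Proposition \ref{fastF} gives $\gen{a_0,a_1}\cong F$, so $F$ embeds into $G$.

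The delicate point is the one I have flagged twice: an element of $\PLoI$ has only finitely many orbitals but need not be a single bump, so the reduction making $g$ a genuine bump and the passage from $[g,h]$ to a bump $a_0$ supported in $(ch,d')$ both require extracting honest bumps of $G$ supported exactly where they are needed. This can be arranged by further commutator and conjugation tricks, at the cost of shrinking the relevant orbitals, after which one checks that the hypotheses of Proposition \ref{fastF} --- in its first, relation-theoretic form if necessary --- survive. I expect this bookkeeping to be the main obstacle. Alternatively, once the dichotomy Theorem \ref{semiconj_dichot} is available one can avoid these manipulations altogether and recover Brin's theorem from it, which the paper does later on.
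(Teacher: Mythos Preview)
Your direct approach is not the one the paper takes. At the point where Theorem~\ref{ubiquity} is stated, the paper simply cites Brin's original \cite{ubiquity}; its own derivation appears later as Corollary~\ref{ubiquity_cor}, where the dichotomy Theorem~\ref{semiconj_dichot} is applied to a group $\Gamma=\gen{af,bg}$ built by pairing elements $f,g\in G$ (whose restrictions to the orbital already satisfy the $F$-relations) with standard $F$-generators $a,b$ supported on a disjoint interval. The equivariant semiconjugacies supplied by the dichotomy are then used to control the images of \emph{all} orbitals of a suitably chosen element at once, so that after conjugating and taking powers one lands in the situation of Proposition~\ref{fastF}. You mention this alternative in your last sentence, and it is in fact the route the paper uses.

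Your direct sketch is in the spirit of Brin's original argument, and the staggered configuration $ch<c<d'<b$ you set up is correct. But the step you label ``bookkeeping'' is not peripheral: it is the entire difficulty. Neither the reduction of $g$ to a single bump nor the passage from $[g,h]$ to a bump $a_0$ with support exactly $(ch,d')$ is available in $G$ for free --- projections to individual orbitals are generally not elements of $G$. Falling back on the relation-theoretic form of Proposition~\ref{fastF} does not obviously rescue the argument either: to verify $[a_0^{a_1},a_1^{a_0}]=\id$ one must disjoint the \emph{full} supports, and the stray orbitals of $[g,h]$ inside $(d',b)$ (and of $g$ outside $J$, once the bump reduction is abandoned) obstruct this. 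Brin's original proof devotes most of its length to precisely this problem; the value of the dichotomy theorem here is that it provides a uniform mechanism for handling the extra orbitals.
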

 
\begin{lem} \label{com_supt} \cite{PLoI}
If $G \leq \PLoI$ and $a \in G'$, then $\overline{\supt}(a) \subseteq \supt G$.
\end{lem}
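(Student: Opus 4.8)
The plan is to exploit the fact that $\supt G$ is open (each $\supt(g)$ is open, since $g$ is continuous), so that the lemma is equivalent to the assertion that no point can lie in $\overline{\supt}(a)$ while lying outside $\supt G$. I would therefore argue by contradiction: suppose $x \in \overline{\supt}(a)$ but $x \notin \supt G$. Then every element of $G$ — in particular $a$ itself — fixes $x$, so $x \notin \supt(a)$, and since $x \in \overline{\supt}(a)$ there is a sequence in $\supt(a)$ converging to $x$. After passing to a subsequence I may assume it converges from the right; the left-hand case is symmetric, and this case does occur since $x < 1$ (the point $1$ is fixed by all of $\HomeoI$, hence lies outside every support).

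The key local observation is that, because $a \in \PLoI$ has only finitely many breakpoints, $a$ is affine on some interval $[x, x+\epsilon)$, say $y \mapsto \lambda y + \beta$ with $\lambda > 0$; since $a(x) = x$ and $a$ is nontrivial somewhere on $(x, x+\epsilon)$, this affine map is not the identity, and therefore $a'(x^+) = \lambda \neq 1$. On the other hand, the assignment $\rho(g) := g'(x^+)$ is defined for every $g \in G$ — this is exactly where $G \leq \PLoI$ is used, so that each $g$ is affine just to the right of $x$ — and, because every $g \in G$ fixes $x$, the chain rule shows $\rho$ is a homomorphism from $G$ into the multiplicative group $\R_{>0}$. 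As $\R_{>0}$ is abelian, $\rho$ vanishes on $G' = \gen{[G,G]}$, so $\rho(a) = 1$, i.e. $a'(x^+) = 1$, contradicting $\lambda \neq 1$. Hence no such $x$ exists and $\overline{\supt}(a) \subseteq \supt G$.

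I do not expect a serious obstacle here; the only points needing care are bookkeeping. First, one should verify that $\rho$ is genuinely a homomorphism under the paper's right-action convention: from $x(gh) = (xg)h$ together with $xg = x$ one gets $(gh)'(x^+) = h'(x^+)\,g'(x^+)$, which suffices since the target group is abelian. Second, one must ensure $\rho$ is defined on all of $G$ and not merely on $a$, which is precisely the reason the hypothesis requires the ambient group — not just the element $a$ — to be piecewise linear. Everything else reduces to an elementary computation localized at a point fixed by $G$.
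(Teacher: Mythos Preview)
Your argument is correct and is precisely the standard proof via the germ homomorphism $g \mapsto g'(x^+)$ (or $g'(x^-)$) at a common fixed point; the paper does not supply its own proof of this lemma but simply cites it from Brin--Squier \cite{PLoI}, so there is nothing to compare against. One small bookkeeping slip: your justification that $x < 1$ is not valid, since $\overline{\supt}(a)$ can perfectly well contain $1$ even though $1$ itself lies outside every support; but this is harmless, because in that boundary case the approaching sequence lies to the left and your symmetric left-derivative computation applies verbatim.
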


The next lemma is more or less established in \cite{PLoI}
in the course of showing that nonabelian subgroups of $\PLoI$ contain
infinite rank free abelian groups.
We leave the details to the interested reader.

\begin{lem} \label{lem:mov}
If $G$ is a subgroup of $\HomeoI$ and
$X \subseteq \supt G$ is compact, then there exists
$g \in G$ such that for all $k \in \Z$, $X g^k \cap X = \emptyset$. 
\end{lem}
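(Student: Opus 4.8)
The plan is to construct $g$ of a special shape: an element of $G$ whose orbitals are large open intervals, each engulfing the part of $X$ lying in a single orbital of $G$. A sufficiently high power of such a $g$ then sweeps $X$ entirely off itself.

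Since $Xg^0\cap X=X$, the statement is to be read with $k$ ranging over the nonzero integers (and is vacuous if $X=\emptyset$), and we establish it in that form. As $X$ is compact and $\supt G$ is the disjoint union of the orbitals of $G$, only finitely many orbitals $J_1,\dots,J_r$ of $G$ meet $X$. Put $c_i:=\inf(X\cap J_i)$ and $d_i:=\sup(X\cap J_i)$, so that $X\cap J_i\subseteq[c_i,d_i]\subseteq J_i$; enlarging $X$ if necessary we may assume each nonempty $[c_i,d_i]$ is nondegenerate. Each $J_i$ is $G$-invariant, so it suffices to produce $g\in G$ which, for every $i$, has an orbital $K_i$ with $[c_i,d_i]\subseteq K_i\subseteq J_i$ and on which $g$ is a positive bump. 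Granting this, we pick $N$ with $c_ig^N>d_i$ for all $i$; a short monotonicity computation then gives $[c_i,d_i](g^N)^k\cap[c_i,d_i]=\emptyset$ for every $i$ and every $k\neq0$, and since the $J_i$ are pairwise disjoint and $g^N$-invariant we conclude $X(g^N)^k\cap X=\emptyset$.

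To build $g$ we work one orbital $J_i$ at a time. Applying the Lebesgue number lemma to the open cover $\{\supt(h):h\in G\}$ of the compact interval $[c_i,d_i]$, choose a partition $c_i=x_0<x_1<\dots<x_M=d_i$ fine enough that each $[x_j,x_{j+1}]$ is contained in $\supt(h)$ for some $h\in G$; being connected, $[x_j,x_{j+1}]$ then lies in a single orbital $L_{i,j}$ of that $h$, and after replacing $h$ by its inverse if necessary we get a positive bump $h_{i,j}\in G$ with orbital $L_{i,j}$. As $L_{i,j}$ is a connected subset of $\supt G$ that meets $J_i$, we have $L_{i,j}\subseteq J_i$; consecutive $L_{i,j}$ overlap (both contain $x_{j+1}$), so $K_i:=\bigcup_{j<M}L_{i,j}$ is an open interval with $[c_i,d_i]\subseteq K_i\subseteq J_i$. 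Now set $g_i:=h_{i,0}h_{i,1}\cdots h_{i,M-1}\in G$. Tracking a point $x$ through the factors --- each $h_{i,j}$ either fixes $x$ or moves it strictly to the right, and at least one factor moves it exactly when $x\in K_i$ --- shows that $\supt(g_i)=K_i$ and that $g_i$ is a positive bump there. Finally let $g:=g_1g_2\cdots g_r\in G$; the supports $K_i\subseteq J_i$ are pairwise disjoint, so $g$ is simply the gluing of the $g_i$ and its orbitals are exactly $K_1,\dots,K_r$, as required.

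The one step that is not routine bookkeeping is this local construction within a single orbital of $G$: a priori one has only a cover of $[c_i,d_i]$ by supports of arbitrary elements of $G$, and $G$ may well contain no element supported inside a single orbital of $G$, so the orbitals cannot be handled too naively. The ``staircase product'' $g_i=\prod_{j<M}h_{i,j}$ of overlapping positive bumps is what converts such a cover into a single element of $G$ with an orbital engulfing $[c_i,d_i]$, and the mutual disjointness of the orbitals of $G$ then permits these local solutions to be combined into one global $g$ with no interference. The remaining steps --- passing to $g^N$ and reading off the disjointness of the $\gen{g^N}$-translates of $X$ --- are straightforward.
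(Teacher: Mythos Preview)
The paper does not supply its own proof of this lemma; it refers to \cite{PLoI} and leaves the details to the reader. So there is nothing to compare against, and your attempt must simply be judged on its own merits.

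Your overall plan is sound, and everything after the local construction (taking the product of the $g_i$ over the orbitals, passing to a high power, reading off disjointness) is fine. The gap is precisely in the step you flag as non-routine. You write that after replacing $h$ by $h^{-1}$ ``we get a positive bump $h_{i,j}\in G$ with orbital $L_{i,j}$,'' and then argue that ``each $h_{i,j}$ either fixes $x$ or moves it strictly to the right.'' But $h$ (or $h^{-1}$) need not be a bump: it may have other orbitals, possibly inside $J_i$, on which it is negative. Your tracking argument for $g_i=h_{i,0}\cdots h_{i,M-1}$ then breaks, because some factor can move the running point to the left; in particular one cannot conclude $\supt(g_i)=K_i$ or even that $g_i$ is positive on $[c_i,d_i]$.

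Concretely: take $G=\langle h_0,h_1\rangle$ acting on $(0,1)$, where $h_0$ has orbitals $(0,\tfrac{2}{3})$ (positive) and $(\tfrac{2}{3},1)$ (negative), while $h_1$ has orbitals $(0,\tfrac{1}{3})$ (negative) and $(\tfrac{1}{3},1)$ (positive). For $[c,d]=[\tfrac{1}{4},\tfrac{3}{4}]$ your recipe gives $L_0=(0,\tfrac{2}{3})$, $L_1=(\tfrac{1}{3},1)$, and $g_i=h_0h_1$. If $h_1$ contracts $(0,\tfrac{1}{3})$ strongly enough and $h_0$ moves $\tfrac{1}{4}$ only slightly, then $\tfrac{1}{4}\,h_0h_1<\tfrac{1}{4}$; so $g_i$ is not positive on $[c,d]$ and certainly not a bump on $K_i=(0,1)$.

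What is actually needed is an argument that produces, for each orbital $J_i$ of $G$ and each compact $[c_i,d_i]\subseteq J_i$, an element $g\in G$ with $xg>x$ for all $x\in[c_i,d_i]$ (equivalently, $[c_i,d_i]$ lies in a single orbital of $g$), \emph{without} assuming the covering elements are bumps. One way is a connectedness argument on the set of $s\in J_i$ for which such a $g$ exists on $[c_i,s]$, combined with conjugation to push the right endpoint of the controlled region past any purported supremum; another is to take suitable high powers and conjugates rather than a naive product. Either route requires real work beyond ``multiply the overlapping pieces.'' Your last paragraph correctly identifies that there is something to do here, but the staircase product as written does not do it.
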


%
%

\section{$F$ doesn't contain $F$ obstructions}

\label{F_obs_in_F:sec}

In this section we'll prove the following proposition.

\begin{prop}
No pair of elements of the standard model of $F$ is an $F$-obstruction.
\end{prop}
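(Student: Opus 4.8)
The plan is to reinterpret the ``rotation number of $f$ modulo $g$'' as a genuine rotation number of a piecewise-linear circle homeomorphism, and then to appeal to the analysis of Thompson's group $T$ due to Ghys and Sergiescu. So let $f,g$ lie in the standard model of $F$ and suppose some point $s$ witnesses the first clause in the definition of an $F$-obstruction, i.e.\ $s<sf\le sg<(sg)f=(sf)g$; it suffices to show the rotation number of $f$ modulo $g$ at $s$ is rational, since a short computation shows that $s$ witnesses the second clause for $(f,g)$ precisely when $(sg)f$ witnesses the first clause for $(f^{-1},g^{-1})$. If $sf=sg$ then $s$ is a fixed point of the relevant circle map and the rotation number is $0$, so assume $sf<sg$. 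Let $h$ be the map $x\mapsto xfg^{-m(x)}$ on $C:=[s,sg)$, with $s$ identified to $sg$ and $m(x)\in\Z$ chosen so that $xfg^{-m(x)}\in[s,sg)$; by definition its rotation number is exactly the quantity to be computed. I would first record the routine facts that $h$ is a well-defined orientation-preserving homeomorphism of the circle $C$ (continuity at the seam $s\sim sg$ is precisely the hypothesis $(sg)f=(sf)g$), that $h$ is piecewise linear with breakpoints only among the breakpoints of $f$ in $[s,sg)$, the finitely many $x$ with $xf=sg^k$ for some $k\in\Z$, and the seam, and that every slope of $h$ is an integer power of $2$, being a product of a slope of $f$ and a slope of a power of $g$.

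The dynamical heart of the argument is the following claim: \emph{if moreover $s\in\Z[\tfrac12]$, then the rotation number of $h$ is rational.} Indeed, when $s$ is dyadic the endpoints of $[s,sg]$ and all the breakpoints of $h$ above are dyadic, so conjugating $h$ by a piecewise-linear homeomorphism $\theta\colon[s,sg]\to[0,1]$ with dyadic breakpoints and slopes powers of $2$ (one exists since $[s,sg]$ has dyadic endpoints) yields a homeomorphism of $\R/\Z$ with dyadic breakpoints and slopes powers of $2$ --- that is, an element of the standard model of Thompson's group $T$. By the theorem of Ghys and Sergiescu \cite{T_rational_rot} that every element of $T$ has rational rotation number, and since rotation number is a conjugacy invariant, the claim follows.

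What remains is to reduce an arbitrary witness $s$ to a dyadic one, and this is the step I expect to require the most care: one might hope the witness can simply be taken dyadic, but this is false in general. Put $h_0:=fgf^{-1}g^{-1}\in F$, so that $s$ lies in the closed set $I\setminus\supt(h_0)=\{x\in I:x(fg)=x(gf)\}$. Since an element of $F$ has only finitely many orbitals and these have dyadic endpoints, $I\setminus\supt(h_0)$ is a finite union of closed intervals with dyadic endpoints; hence any non-dyadic point of it lies in the interior of one such interval. Thus if $s$ is not dyadic I would choose a dyadic $s'$ in that interior close enough to $s$ that $s'$ again satisfies $s'<s'f<s'g<(s'g)f=(s'f)g$ (the strict inequalities are open conditions holding near $s$, and the equality holds because $s'\in I\setminus\supt(h_0)$). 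The change-of-fundamental-domain map $\Phi\colon[s,sg)\to[s',s'g)$, sending each point to the unique $\gen{g}$-translate of it lying in $[s',s'g)$, is a homeomorphism of circles, and a direct computation --- using $x(fg)=x(gf)$ for every $x$ in the subinterval of $I\setminus\supt(h_0)$ on which $\Phi$ acts nontrivially --- shows that $\Phi$ conjugates $h$ to the circle map computing the rotation number of $f$ modulo $g$ at $s'$. Hence the two rotation numbers coincide, and by the claim the one at $s'$ is rational. Combining the three cases ($sf=sg$; $s$ dyadic; $s$ non-dyadic), the rotation number of $f$ modulo $g$ at every valid $s$ is rational, so no pair of elements of the standard model of $F$ can witness either clause --- which is the proposition. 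Apart from this reduction, the only remaining work is the bookkeeping in the first paragraph and the citation of Ghys--Sergiescu.
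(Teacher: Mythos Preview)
Your argument is correct and reaches the same endpoint as the paper --- the Ghys--Sergiescu theorem on $T$ --- but the reduction is organized differently. The paper does not try to make $s$ dyadic; instead it chooses a dyadic $s_0<s$ with $s<s_0g$ and then conjugates $f,g$ inside $F$ (via the ``staircase algorithm'') so that $g$ becomes the translation $t\mapsto t+2^{-k}$ on $[s_0,s_0g^2]$. After this normalisation the circle map on $[s,sg)$ is visibly a PL homeomorphism of $\R/2^{-k}\Z$ with dyadic data, hence conjugate to an element of $T$. Your route instead exploits the fact that the fixed set of $[f,g]$ is a finite union of closed intervals with dyadic endpoints, so a non-dyadic witness $s$ lies in the interior of one of them and can be slid to a nearby dyadic $s'$; the change-of-fundamental-domain map then conjugates the two circle maps because the commutation $fg=gf$ holds on the short arc $[s',s)$ (or $[s,s')$) where $\Phi$ acts by $g^{\pm1}$. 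This avoids the staircase conjugation entirely and is a bit more self-contained.

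One small imprecision worth fixing: your list of breakpoints of $h$ omits the points $x\in[s,sg)$ for which $xf$ is a breakpoint of $g^{-1}$ (these arise on the piece where $m(x)=1$). This does not affect the conclusion, since such $x$ are again dyadic when $s$ is, but the enumeration as written is incomplete. With that correction your ``dyadic $s$'' claim goes through, and the reduction step is exactly the computation you outline: for $y$ in the overlap arc one has $y(fg)=y(gf)$, hence $(yg^{\pm1})f$ and $yf$ lie in the same $\langle g\rangle$-orbit, which is precisely what is needed for $\Phi h\Phi^{-1}=h'$.
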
 

\begin{proof}
Recall that Thompson's group $T$ consists of all piecewise linear homeomorphisms
of $\R/\Z$ which map $0$ to a dyadic rational, whose breakpoints are dyadic rationals,
and whose slopes are powers of $2$.
Ghys and Sergiescu \cite{T_rational_rot} have shown that every element of $T$ has a rational rotation number.
It therefore suffices to show that if $f,g \in F$ and $s \in I$ with $s < sf \leq sg < sfg=sgf$,
then the associated homeomorphism $\gamma$ defined in the introduction is topologically conjugate to
an element of $T$.

Let $s$, $f$ and $g$ be given as above and let $s_0 < s$ be a dyadic rational such that
$s < s_0 g$, noting that $sg < s_0 g^2$.
By conjugating by an element of $F$ and revising $f$, $g$, $s$, and $s_0$ if necessary, we may assume that
for some $k$, $s_0 g = s_0 + 2^{-k}$ and $s_0 g < 1-2^{-k}$.
By further conjugating by an element $h$ of $F$ which satisfies 
$th = t$ if $t \leq s_0 + 2^{-k}$ and $th = t g^{-1} + 2^{-k}$ if $s_0 + 2^{-k} \leq t \leq s_0 g^2$, we may additionally assume
that if $s_0 \leq t \leq s_0 g$, then $t g = t+2^{-k}$. 
(This conjugacy argument is essentially the \emph{staircase algorithm} of \cite{KassabovMatucci}.)
Repeating this procedure on the interval $[s_0 g, s_0 g^2]$,
we may assume without loss of generality that if $s_0 \leq t < s_0 g^2$, then $tg = t+2^{-k}$.

The homeomorphism $\gamma$ associated to this revised choice of 
$f$, $g$ and $s$ is topologically conjugate to the homeomorphism associated to the original choice of $f$, $g$,
and $s$. 
Moreover $\gamma$ is a homeomorphism of $\R/2^{-k}\Z$ which maps dyadic rationals to
dyadic rationals, whose breakpoints are dyadic rationals, and whose slopes are powers of $2$.
Clearly $\gamma$ is topologically conjugate to an element of $T$ and hence by \cite{T_rational_rot}, $\gamma$
has a rational rotation number.
\end{proof}
 
\section{$F$-obstructions yield copies of $F$}

\label{F_embed:sec}

A key step in proving Theorem \ref{main_result} is to demonstrate that
if $f,g \in \PLoI$ is a basic $F$-obstruction and $J := \supt \gen{f,g}$,
then $J$ is a resolvable orbital of $\gen{f,g}$.
When combined with Proposition \ref{fastF}, this readily yields many copies
of $F$ inside $\gen{f,g}$.
The first step is the following lemma.

\begin{lem} \label{notcommute} 
If $(f,g)$ is an $F$-obstruction, then $f$ and $g$ don't commute.
\end{lem}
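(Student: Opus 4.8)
The plan is to argue by contradiction: suppose $(f,g)$ is an $F$-obstruction witnessed by a point $s$ and suppose $fg = gf$. I will treat the first case in the definition of $F$-obstruction, where $s < sf \leq sg < sfg = sgf$ and the rotation number of $f$ modulo $g$ at $s$ is irrational; the second case is symmetric (replace $f, g$ by $f^{-1}, g^{-1}$ and $s$ by $sfg$, or simply note the argument dualizes). Recall that the rotation number in question is that of the circle homeomorphism $\gamma$ on $[s, sg)$ given by $x \mapsto x f g^{-m(x)}$, where $m(x)$ is chosen so that the image lands in $[s,sg)$; here $[s,sg)$ is topologized as a circle by gluing $s$ to $sg$ via $g$.

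First I would observe that if $f$ and $g$ commute then $\gamma$ is not merely a homeomorphism of the circle but is \emph{conjugate to a rotation}. Indeed, commutativity means $g$ descends to the circle $[s,sg)$ as the identity (it is exactly the gluing map), so the dynamics of $\gamma$ are governed entirely by $f$ acting on the quotient; more to the point, $g$ commuting with $f$ means the ``translation'' structure is preserved and $\gamma$ is conjugate to the translation by its rotation number. Alternatively, and more concretely in the $\PLoI$ setting: on the fundamental domain $[s,sg]$ the map $\gamma$ agrees with $f$ followed by a power of $g$, and the commutation relation forces $\gamma$ to be built from a single piecewise-linear bump whose iterates are constrained. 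The cleanest route is to note that $\langle f, g\rangle$ is abelian, hence (being a subgroup of $\PLoI$ acting on the orbital) it is in fact a subgroup of the additive group obtained by conjugating to translations — by the standard fact that an abelian subgroup of $\HomeoI$ with a common orbital, on which one generator has no interior fixed points, is topologically conjugate to a group of translations of $\R$ (restricted to an interval). Under such a conjugacy $g$ becomes translation by the period and $f$ becomes a translation, so $\gamma$ becomes an honest rotation by a number of the form $r/1$ where $r$ is the ratio of the two translation amounts.

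The key step — and the main obstacle — is then to derive that this rotation number must be \emph{rational}. This uses that $f, g \in \PLoI$: the point $s$ and the image $sg$, together with the breakpoints of $f$ and $g$, are finitely many points, and the slopes are rational (or at least the group of germs is finitely generated); more importantly, when $\langle f,g\rangle$ is conjugated to a translation group, the requirement that the original maps be piecewise linear with finitely many breakpoints forces the translation amounts to be commensurable. Concretely: $f$ must have a fixed point in $(s, sg)$ or else, by the piecewise-linearity, iterating $\gamma$ would produce infinitely many breakpoints in a compact set — but $f$ modulo $g$ having a fixed point means precisely $sf g^{-m} = s$ for some $m$, i.e. $sf^q = s g^p$ in the group generated, giving rotation number $p/q \in \Q$. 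I would make this rigorous by showing: if $fg=gf$, then for the point $s$ witnessing the obstruction, the $\gamma$-orbit of $s$ is contained in the $\langle f,g\rangle$-orbit of $s$, which (by commutativity and piecewise-linearity, looking at germs or at the finitely many breakpoints) is either finite or fails to be the orbit structure of an irrational rotation. Either way the rotation number is rational, contradicting that $(f,g)$ is an $F$-obstruction. Hence $f$ and $g$ do not commute.

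I expect the genuine technical content to sit entirely in that last paragraph: ruling out an irrational rotation number for a commuting pair in $\PLoI$. A slick alternative worth trying is to invoke Theorem~\ref{ubiquity} or the structure of abelian subgroups of $\PLoI$ directly — an abelian subgroup of $\PLoI$ with a single orbital is, after topological conjugacy, a group of translations, and a finitely generated group of translations of $\R$ is free abelian of finite rank with a well-defined rational commensurability ratio between any two elements; translating back, ``$f$ modulo $g$'' has rational rotation number. This avoids hand calculation with breakpoints and is probably the form I would write up.
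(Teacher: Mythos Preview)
Your overall shape is right --- assume $f$ and $g$ commute, and show the rotation number of $f$ modulo $g$ at $s$ must then be rational --- and this is exactly what the paper does. But the paper accomplishes the key step in one line by invoking the Brin--Squier result \cite{pres_cong_PLoI}: two commuting elements of $\PLoI$ with a common orbital have a common root, i.e.\ there is an $h$ with $f|_J = h^p$ and $g|_J = h^q$ for integers $p,q$. Then $f^q$ and $g^p$ agree on $J$, so $\gamma^q$ is the identity and the rotation number is $p/q$.

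Your proposal never reaches this, and the substitute you offer at the end is incorrect as stated. You write that ``a finitely generated group of translations of $\R$ is free abelian of finite rank with a well-defined rational commensurability ratio between any two elements,'' but this is false: translations by $1$ and by $\sqrt{2}$ generate a free abelian group of rank $2$ with irrational ratio. Conjugating an abelian subgroup of $\PLoI$ to a translation group (even granting that step, which itself needs justification) does not by itself force commensurability; what forces it is precisely the piecewise-linear structure, and the content of that implication \emph{is} the Brin--Squier common-root theorem. Your earlier breakpoint\slash germ sketch gestures in this direction but does not supply an argument --- in particular, the assertion that commutativity and PL force $\gamma$ to have a periodic point is exactly the thing to be proved, not a consequence of what you have written. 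So the gap is real: either cite the common-root theorem, or reproduce its proof.
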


\begin{proof}
Let $s$ witness that $(f,g)$ is an $F$-obstruction and let $J$ be the orbital of 
$\gen{f,g}$ such that $s \in J$.
If $f |_J$ and $g|_J$ commute, then by \cite{pres_cong_PLoI} there must be an $h$ such
that $f |_J = h^p$ and $g |_J = h^q$ for integers $p$ and $q$.
This implies that the rotation number of $f$ modulo $g$ at $s$ is $p/q \in \Q$,
which is a contradiction.
\end{proof}

For the duration of this section, fix
a basic $F$-obstruction $(f,g)$ and fix an $s \in I$ which witnesses this.
Specifically, set $C:=[s,sg)$ and let $\gamma :C \to C$ be defined by
$$
x \gamma := 
\begin{cases}
xf & \textrm{ if } xf < sg \\
xfg^{-1} & \textrm{ if } sg \leq xf. \\
\end{cases}
$$
Notice that if $s \leq x < sg$ and $sg \leq xf$, then $s \leq xfg^{-1} < sg$;
this last inequality holds since $xf < sgf = sfg \leq sg^2$ by our hypothesis.
Define a metric $d$ on $C$ by
$$
d(x,y):=\min (y-x,sg-y + x-s)
$$
whenever $x < y$.

With this metric, $C$ is homeomorphic to a circle and $\gamma$ is an orientation preserving homeomorphism of $C$.
Our hypothesis is that the rotation number of $\gamma$ is irrational.
Notice that this implies that $sf \ne sg$ (otherwise this would give a rotation number of $0$) and
hence $sf < sg$.

Since $\gamma$ is piecewise linear, MacKay's variation of Denjoy's Theorem \cite{MacKay} implies
the orbits of $\gamma$ are dense and moreover that $\gamma = \alpha^{-1} \theta \alpha$ for
some irrational rotation $\theta$ of $C$ and some homeomorphism $\alpha$ of $C$.
Since $\alpha$ is uniformly continuous, $\theta$ is an isometry, and
$\gamma^n = \alpha^{-1} \theta^n \alpha$, we can witness the uniform continuity of $\gamma^n$ independently of $n$:
every $\epsilon > 0$ there is a $\delta > 0$
such that for all $x,y \in C$ and $n \in \Z$
if $d(x,y) < \delta$, then $d(x \gamma^n , y \gamma^n) < \epsilon$.
Noting that this assertion remains unchanged if we swap the roles of $(x,y)$ and $(x \gamma^n,y \gamma^n)$,
we will sometimes employ the contrapositive of this implication:
if $d(x,y) \geq \epsilon$ and $n \in \Z$, then $d(x\gamma^n , y \gamma^n) \geq \delta$.
Notice that $d(x,y) \leq |x-y|$ and $d(x,y) = |x-y|$ if $|x-y| \leq (sg-s)/2$.
Since $fgf^{-1}g^{-1} \in \PLoI$ and $sfg = sgf$, there are $t > s$ and $c > 0$ such that $xfgf^{-1}g^{-1} = cx + (1-c)s$ whenever $s \leq x \leq t$. 
If $c \leq 1$, then $xfg \leq xgf$ for all $x \in [s,t)$ and if $c \geq 1$, then $xfg \geq xgf$ for all $x \in [s,t)$.

\begin{lem} \label{remain_long}
There is a $\delta > 0$ such that for all $n \geq 0$ and all $x < y$ in $C$ with $|x-y| < \delta$:
\begin{itemize}

\item if $c \leq 1$ and $x \gamma^n < y \gamma^n$, then
there is an $h \in \gen{f,g}$ such that $xh = x \gamma^n < y \gamma^n \leq yh$;

\item if $c \geq 1$ and $x \gamma^{-n} < y \gamma^{-n}$, then 
there is an $h \in \gen{f,g}$ such that $xh = x \gamma^{-n} < y \gamma^{-n} \leq yh$.

\end{itemize}
\end{lem}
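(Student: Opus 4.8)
The plan is to define $h$ from the $\gamma$-itinerary of $x$ and then show that it dominates $\gamma^n$ at $y$; the only real content is that the discrepancy between $h$ and $\gamma^n$ at $y$ never has the wrong sign, and this is exactly where the hypothesis on $c$ is used.

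Put $z_0 := sgf^{-1}$, so that on $C = [s,sg)$ one has $z\gamma = zf$ for $z < z_0$ and $z\gamma = zfg^{-1}$ for $z \geq z_0$. Define $m_i \in \{0,1\}$ by $m_i := 0$ iff $x\gamma^{i-1} < z_0$, and set $h_i := fg^{-m_1}\cdots fg^{-m_i}$ and $h := h_n$; then $x h = x\gamma^n$ by construction. Writing $y^{(i)} := y h_i$ and $y_i := y\gamma^i$, I will prove $y^{(i)} \geq y_i$ for all $i \leq n$ by induction on $i$; granting this, $yh = y^{(n)} \geq y_n = y\gamma^n$, and since $xh = x\gamma^n < y\gamma^n$ by hypothesis, we get $xh = x\gamma^n < y\gamma^n \leq yh$ as required. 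To run the induction I first fix $\epsilon > 0$ small enough that the identified point $s \sim sg$, the point $z_0$, and the right endpoint of the orbital $J_g$ of $g$ containing $s$ lie at distance more than $\epsilon$ from one another and from the relevant breakpoints of $f$ and $g$ (here one uses the moduli of continuity of $f^{\pm 1}$ and $g^{\pm 1}$), and also $\epsilon$ smaller than the length of the interval on which $fgf^{-1}g^{-1}$ is affine; then I take $\delta > 0$ from the uniform continuity of $\gamma^n$, uniformly in $n$, so that $|x-y| < \delta$ forces every arc $[x\gamma^i, y\gamma^i]$ of $C$ to have diameter less than $\epsilon$.

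The heart of the argument is the following bookkeeping. At step $i$ the element $h$ multiplies $y^{(i-1)}$ by $fg^{-m_i}$, while $\gamma$ multiplies $y_{i-1}$ by whichever of $f$, $fg^{-1}$ is dictated by the side of $z_0$ on which $y_{i-1}$ lies; since $x_{i-1} := x\gamma^{i-1}$ and $y_{i-1}$ lie in a common arc of diameter $< \epsilon$, these two prescriptions agree except at \emph{crossings}, and these occur only in consecutive pairs. A first crossing (type B1) occurs when $x_{i-1} < z_0 \leq y_{i-1}$; afterward $x_i$ is just below $sg$ and $y_i$ just above $s$, so the linear order on $C$ has reversed; the next step is then forced to be a second crossing (type B2), after which $x_{i+1}$ and $y_{i+1}$ both lie near $sf$ and the order is restored. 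At an ordinary step $h$ and $\gamma$ multiply $y^{(i-1)}$ and $y_{i-1}$ by the same increasing map, so $y^{(i)} \geq y_i$ is preserved. Treating a B1--B2 pair as a single two-step move, one has $y_{i+1} = y_{i-1}\, fg^{-1}f$ and $y^{(i+1)} = y^{(i-1)}\, f^2 g^{-1}$, so, since $f$ and $g$ are increasing and $y^{(i-1)} \geq y_{i-1}$, it is enough to compare $y_{i-1} f^2 g^{-1}$ with $y_{i-1} fg^{-1}f$; setting $P := y_{i-1} f$, which lies just above $sg$ (hence in $J_g$), this is precisely the inequality $P\, fg^{-1} \geq P\, g^{-1} f$.

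This last inequality is the step I expect to be the main obstacle; everything else is routine. Let $q := P g^{-1}$, which lies just above $s$. Then $P\, g^{-1} f = q f$, while $P\, fg^{-1} = q\, gfg^{-1}$, and a one-line computation from $xfgf^{-1}g^{-1} = cx + (1-c)s$ --- namely $xfg = \rho_c(x)\, gf$, hence $x\, gf = \rho_{1/c}(x)\, fg$, hence $x\, gfg^{-1} = \rho_{1/c}(x)\, f$, where $\rho_t(x) := tx + (1-t)s$ --- shows $P\, fg^{-1} = \rho_{1/c}(q)\, f$. Since $q \geq s$ and $c \leq 1$ we have $\rho_{1/c}(q) \geq q$, so $P\, fg^{-1} \geq q f = P\, g^{-1} f$, which closes the induction. (The B1 half of a pair by itself is easier: there one only needs that $g$ moves the points of $[sg, sg + \epsilon] \subseteq J_g$ strictly to the right.) The case $n = 0$ is immediate, and the case $c \geq 1$ with backward iterates is entirely analogous: one runs the same argument for $\gamma^{-1}$, whose cut point is $sf$ and whose B1--B2 correction step produces the factor $\rho_c$ in place of $\rho_{1/c}$, using that $\rho_c(x) \geq x$ for $x \geq s$ when $c \geq 1$.
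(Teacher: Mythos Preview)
Your proposal is correct and follows essentially the same strategy as the paper. Both arguments build $h$ from the $\gamma$-itinerary of $x$ (the paper does this implicitly via its inductive construction $h = h_0 f$, $h = h_0 fg^{-1}$, or $h = h_0 f^2 g^{-1}$), both reduce the crossing case to the two-step comparison of $y_{i-1} f^2 g^{-1}$ against $y_{i-1} f g^{-1} f$, and both close that comparison with the same affine identity for $[f,g]$ near $s$ under the hypothesis $c \le 1$. The only differences are organizational: you run the induction forward on $i$ tracking $y^{(i)} \ge y_i$, while the paper runs it backward on $n$ with a case split on the position of $x\gamma^n, y\gamma^n$ relative to $sf$; and the paper isolates the ``arcs stay short in the right sense'' fact as a separate Claim, whereas you fold it into the choice of $\epsilon$.
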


\begin{proof}
First observe that $sf < sfg f^{-1} = sgff^{-1} = sg$ and hence if $s \leq x^* < sf$, then $x^* gf^{-1} < sfgf^{-1} = sg$.

\begin{claim} \label{cyclic_claim}
There is a $\delta > 0$ such that for all $s \leq x < y < sg$ with $|x - y| < \delta$ and for all $n \in \Z$:
\begin{itemize}

\item
if $x \gamma^n < y \gamma^n$, then $d(x \gamma^n,y \gamma^n) = |x \gamma^n - y \gamma^n|$;

\item
if $x \gamma^n > y \gamma^n$, then $d(x \gamma^n,y \gamma^n) = |x \gamma^n - s + sg - y \gamma^n|$.

\end{itemize}
\end{claim}

\begin{proof}
Let $\delta > 0$ be such that $\delta < (sg - s)/6$ and for all $s \leq x,y < sg$ and $n \in \Z$
if $d(x,y) < \delta$, then $d(x \gamma^n,y \gamma^n) < (sg-s)/6$.
This implies that whenever $s \leq x,y < sg$ and $n \in \Z$ if $d(x,y) \geq (sg-s)/6$, then
$d(x \gamma^n,y \gamma^n) \geq \delta$.
Now suppose that $s \leq x < y < sg$ are given with $|x-y| < \delta$.
Let $s \leq z < sg$ be such that $\min(d(y,z),d(z,x)) \geq (sg-s)/6$ --- for instance we can take $z$ to be
the midpoint of the longest arc of $C$ connecting $x$ and $y$.
Such a $z$ cannot be between $x$ and $y$ in the cyclic order.

Suppose that $n \in \Z$ and $x \gamma^n < y \gamma^n$.
Since $\gamma^n$ preserves the cyclic order on $C$,
either $z \gamma^n < x \gamma^n$ or $y \gamma^n < z \gamma^n$.
Since $$\min(d(x\gamma^n,z\gamma^n),d(y \gamma^n,z \gamma^n)) \geq \delta,$$ 
it follows that $\delta \leq (x \gamma^n - s) + (sg - y \gamma^n)$.
Since $$d(x\gamma^n,y \gamma^n) = \min (|x \gamma^n - y \gamma^n|, |x \gamma^n - s + sg - y \gamma^n|) < \delta$$
it must be that $d(x\gamma^n,y \gamma^n) = |x \gamma^n - y \gamma^n|$.

On the other hand,
if $n \in \Z$ is such that $x \gamma^n > y \gamma^n$, then
since $\gamma^n$ preserves the cyclic order on $C$, it must be that
$y \gamma^n < z \gamma^n < x \gamma^n$.
Thus
\begin{align*}
\delta \leq \min(d(y\gamma^n, z \gamma^n),d(x \gamma^n,z \gamma^n)) & \leq
\min(|y \gamma^n- z \gamma^n|,|x \gamma^n-z \gamma^n|) \\
& \leq |x\gamma^n-y \gamma^n|
\end{align*}
and therefore $d(x \gamma^n,y \gamma^n) = |x \gamma^n - s + sg - y \gamma^n|$.
\end{proof}

Let $\epsilon > 0$ be such that $\epsilon < sg-sf$ and if $|x^* - y^*| < \epsilon$ and
$x^* f < sg \leq y^* f$, then $y^* fg^{-1} < t$.
Find a $\delta > 0$ satisfying the conclusion of Claim \ref{cyclic_claim}
and such that additionally if $d(x^*,y^*) < \delta$, then $d(x^* \gamma^n,y^* \gamma^n) < \epsilon$ for all $n \in \Z$.

We will now verify the conclusion of the lemma by induction on $n \geq 0$
under the assumption $c \leq 1$;
the case $c \geq 1$ is handled by an analogous computation.
If $n = 0$, then we can take $h$ to be the identity and there is nothing to show.
Now suppose that $n > 0$, $x < y$ and $x \gamma^n < y \gamma^n$.
If $sf \leq x \gamma^n$, then $x \gamma^n = x \gamma^{n-1} f$ and $y \gamma^n = y \gamma^{n-1} f$.
By our induction hypothesis, there is an $h_0 \in \gen{f,g}$ such that
$x \gamma^{n-1} = xh_0$ and $y \gamma^{n-1} \leq yh_0$.
Since $f$ is order preserving, $y \gamma^{n-1} f \leq yh_0 f$ and since $x \gamma^n = xh_0 f$,
$h:=h_0f$ satisfies the conclusion of the lemma.
Similarly, if $y \gamma^n < sf$, then $x \gamma^n = x \gamma^{n-1} fg^{-1}$ and $y \gamma^n = y \gamma^{n-1} fg^{-1}$
and we can apply our induction hypothesis to find an $h_0 \in \gen{f,g}$ such that
$xh_0 = x\gamma^{n-1}$ and $y \gamma^{n-1} \leq yh_0$.
It follows that $h:=h_0fg^{-1}$ satisfies
$$xh = xh_0fg^{-1} = x \gamma^n < y \gamma^n = y \gamma^{n-1} fg^{-1} \leq yh_0 fg^{-1} = yh.$$

Finally, suppose that $x \gamma^n < sf \leq y \gamma^n$.
By choice of $\delta$ and its property asserted in Claim \ref{cyclic_claim}, this implies
that $d(x \gamma^n, y \gamma^n) = |x \gamma^n - y \gamma^n| < \epsilon$.
It follows that $x \gamma^n = x \gamma^{n-1} fg^{-1}$ and $y \gamma^n = y \gamma^{n-1} f$.
Observe that $x \gamma^{n-1} > y \gamma^{n-1}$ and hence $n > 1$ and
$d(x \gamma^{n-1},y \gamma^{n-1}) = |x \gamma^{n-1} - s + sg - y \gamma^{n-1}|$.
Since $d(x \gamma^{n-1},y \gamma^{n-1}) < \epsilon$, 
it follows that $sf < sg-\epsilon < x \gamma^{n-1}$.
Thus $x \gamma^n = x \gamma^{n-2} f^2 g^{-1}$ and $y \gamma^n  = y \gamma^{n-2} fg^{-1} f$.
Observe that $x \gamma^{n-2} < sgf^{-1} \leq y \gamma^{n-2}$.
By the induction hypothesis, there is an
$h_0 \in \gen{f,g}$ such that $xh_0 = x\gamma^{n-2}$ and $y \gamma^{n-2} \leq yh_0$.
Define $h=h_0 f^2 g^{-1}$.
Since $d(x \gamma^{n-2},y \gamma^{n-2}) < \epsilon$ by our choice of $\delta$ and
since $x \gamma^{n-2} f < sg \leq y \gamma^{n-2} f$, 
it follows from our choice of $\epsilon$ that $s \leq y \gamma^{n-2} f g^{-1} < t$.
Therefore 
$$y \gamma^{n-2} fg^{-1} fg \leq y \gamma^{n-2} f g^{-1} gf = y \gamma^{n-2} f^2$$ 
Acting on the right by $g^{-1}$ yields that 
$$y \gamma^n = y \gamma^{n-2} f g^{-1} f \leq y \gamma^{n-2} f^2 g^{-1} \leq yh_0 f^2 g^{-1} = yh$$
and hence $h$ satisfies the conclusion of the lemma.
\end{proof}

\begin{prop} \label{dense_bumps}
Suppose that $(f,g)$ is a basic $F$-obstruction.
There are dense sets $A,B \subseteq J := \supt \gen{f,g}$ such that
if $a \in A$ and $b \in B$ with $a < b$ then there is an $h \in \gen{f,g}$ such
that $\supt(h) = (a,b)$.
In particular, the support of $\gen{f,g}$ is a resolvable orbital of $\gen{f,g}$.
\end{prop}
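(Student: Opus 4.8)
The plan is to build the dense sets $A,B$ together with the required group elements in two stages: first produce, inside $C=[s,sg)$, a single nontrivial element of $\gen{f,g}$ supported in an arbitrarily small interval, and then transport such elements around by the action of $\gen{f,g}$. We may assume $c\le 1$; when $c\ge 1$ one argues identically after replacing $\gamma$ by $\gamma^{-1}$, which is again PL with irrational rotation number and dense orbits and to which the second clause of Lemma \ref{remain_long} applies. It suffices to find dense subsets $A_0,B_0$ of (the interior of) $C$ such that every interval $(a,b)$ with $a\in A_0$, $b\in B_0$, $a<b$, is the support of some element of $\gen{f,g}$: conjugating such an element by $\sigma\in\gen{f,g}$ replaces its support by the translated interval, and since $\supt\gen{f,g}=\supt(f)\cup\supt(g)$ is connected the $\gen{f,g}$-orbit of $C$ is all of $J$ (a routine connectedness argument, using that $sf\le sg$ makes the translates $Cf^n$ have overlapping closures), so the conjugates yield dense $A,B\subseteq J$ with the same property.

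For the small bump, fix $\delta$ as in Lemma \ref{remain_long}, pick a base point $p_0\in C$, and use MacKay's theorem (density of the $\gamma$-orbit of $p_0$) together with the uniform equicontinuity of $\{\gamma^n\}$ recorded before Lemma \ref{remain_long}: choose pairs $x<y$ in the orbit of $p_0$ with $|x-y|<\delta$ and a shift $n\ge 0$ so that $x\gamma^n$ and $y\gamma^n$ fall inside a prescribed tiny subinterval of $C$. Lemma \ref{remain_long} then hands us $h\in\gen{f,g}$ with $xh=x\gamma^n$ and $y\gamma^n\le yh$. A single such $h$ need not have small support, because the lemma controls exactly where the left endpoint goes but only bounds the image of the right endpoint from below. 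To kill the uncontrolled part, one applies the lemma for two carefully related shifts and passes to a commutator; this produces a nontrivial element of $\gen{f,g}$ whose support lies in any preassigned small subinterval of $C$ with endpoints in the orbit of $p_0$. Replacing an element by its inverse if necessary, each such element may be taken to be a positive bump, and composing finitely many positive bumps with overlapping supports moves every point of the union of the supports strictly to the right. Chopping a given interval into short overlapping pieces with endpoints in the orbit of $p_0$ and composing the corresponding bumps therefore realizes, as a support, every interval $(a,b)$ with $a<b$ and endpoints in that orbit; this orbit is the dense set $A_0=B_0$.

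Transporting $A_0$ over $J$ by conjugation, as in the first paragraph, yields dense sets $A,B\subseteq J$ and elements of $\gen{f,g}$ with support $(a,b)$ for all $a\in A$, $b\in B$ with $a<b$; since these intervals form a base for the topology of $J$, $J$ is a resolvable orbital of $\gen{f,g}$. The main obstacle is exactly the point flagged above: Lemma \ref{remain_long} pins down the image of the left endpoint but only gives an inequality at the right endpoint, so no single application bounds the support from above; the real work lies in choosing the two shifts and forming the commutator so that the uncontrolled overshoot cancels, leaving an element supported in an interval as small as we wish.
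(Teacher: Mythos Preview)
Your proposal has a genuine gap at the crucial step. The element $h$ furnished by Lemma \ref{remain_long} is simply a word in $f,g,f^{-1},g^{-1}$; the lemma says nothing whatsoever about $\supt(h)$, which will typically be all of $J$. Your diagnosis that ``the lemma controls exactly where the left endpoint goes but only bounds the image of the right endpoint from below'' misreads the situation: the lemma controls the images $xh$ and $yh$ of two \emph{points}, not the endpoints of any support. Consequently the proposed cure---take two such $h$'s for ``carefully related shifts'' and form a commutator---does not obviously produce anything with small support; a commutator in $\gen{f,g}$ has closure of support inside $J$ by Lemma \ref{com_supt}, but there is no mechanism in your outline forcing that support to lie in a prescribed tiny interval. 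You assert that ``the uncontrolled overshoot cancels'' without saying how, and I do not see a way to make this work using only two applications of the lemma.

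The paper's argument uses Lemma \ref{remain_long} quite differently: it never treats the lemma's output $h$ as a small-support element, but as a \emph{conjugator}. One first takes $[f,g]$, whose support has closure in $J$ by Lemmas \ref{notcommute} and \ref{com_supt}, and conjugates it so that the infimum $p$ of its support lies in $C$; the interval $(p,q]$ is then an initial segment of the support of this conjugate. Lemma \ref{remain_long} applied to the pair $p<q$ produces $h$ with $ph=p\gamma^n$ exactly and $qh\ge q\gamma^n$, so the conjugate by $h$ has support beginning precisely at $p\gamma^n$ and extending at least $\delta$ to the right. This yields a dense set $A_0\subseteq C$ of left endpoints; a chaining argument then gives, for each $a\in A_0$, an element whose support has $(a,sg]$ as an initial segment. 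A symmetric argument gives a dense set $B_0$ of right endpoints, and a single explicit commutator of a pair of such one-sided elements produces the bump with support exactly $(a,b)$. Finally, propagating $A_0,B_0$ over $J$ is not quite ``routine'': one must handle pairs $a\in A$, $b\in B$ that are not translates of a pair in $C$ by a \emph{common} conjugator, and the paper does this via a maximality argument on the interval over which the conclusion holds.
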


\begin{proof}
We will first show that there is an $A_0 \subseteq [s,sg)$ which is dense in $[s,sg)$
such that if $a \in A_0$, then $(a,sg]$ is an initial part of the support of some element of $\gen{f,g}$. 
By Lemma \ref{notcommute}, the commutator $[f,g]$ is not the identity and by Lemma \ref{com_supt},
the infimum of its support is $J$.
Let $h_0 \in \gen{f,g}$ be such that $p:= \inf \supt ([f,g]^{h_0})$ is in $(s,sg)$. 
Such an $h_0$ exists since every $\gen{f,g}$-orbit of a point in $J$ intersects $[s,sg)$.
Let $q > p$ be such that 
$(p,q]$ is an initial part of the support of $[f,g]^{h_0}$. 
Let $\delta > 0$ be satisfy the conclusions of Lemma \ref{remain_long} and Claim \ref{cyclic_claim} and moreover satisfy
for all $x < y$ in $C$:
\begin{itemize}

\item if $d(x,y) \geq q-p$, then for all $n$, $d(x\gamma^n,y \gamma^n) \geq \delta$;

\item if $d(x,y) < \delta$ and $x \gamma^n > y \gamma^n$, then $x \gamma^{n-1} < y \gamma^{n-1}$.

\end{itemize}
There are now two cases depending on whether $c \leq 1$.

If $c \leq 1$, then define $A_0 := \{p \gamma^n \mid n \geq 0\}$.
By MacKay's variation of Denjoy's Theorem \cite{MacKay}, $A_0$ is dense in $[s,sg)$.

\begin{claim}
If $a \in A_0$, then either $(a,a+\delta]$ or $(a,sg]$ is an initial part of
the support of some element of $\gen{f,g}$.
\end{claim}

\begin{proof}
If $a \in A_0$, let $n \geq 0$ be such that $a = p \gamma^n$.
If $a < q \gamma^n$, then by choice of $\delta$ 
there is an $h \in \gen{f,g}$ such
that
$$a=ph = p \gamma^n < a + \delta \leq q \gamma^n \leq q h.$$
Thus, $(a,a+\delta]$ is an initial segment of the support of $[f,g]^{h_0h}$. 
If $p \gamma^n > q \gamma^n$, then by choice of $\delta$ we have that 
$af^{-1} = p \gamma^{n-1} <  q \gamma^{n-1}$ and
there is an $h$ such that $p\gamma^{n-1} = ph$ and $q \gamma^{n-1} \leq q h$.
It follows that $(af^{-1}, q \gamma^{n-1}] \subseteq (ph,qh]$ is an initial part of the support of $[f,g]^{h_0 h}$ and 
hence $(a,sg]$ is an initial part of the support of $[f,g]^{h_0 hf}$. 
\end{proof}

Now suppose that $a \in A_0$ and use the density of $A_0$ to
select a sequence $a_0 = a < a_1 < \ldots < a_k = sg$ such
that if $i < k$ then $a_i \in A$ and $a_{i+2} - a_i < \delta$.
For each $i < k$, let $h_i \in \gen{f,g}$ be such that $(a_i,a_{i+1}]$ is an initial part of $\supt(h_i)$ and
$a_{i+2} \leq a_{i+1} h_i$ if $0 \leq i \leq k-2$.
If $h:= \prod_{i < k} h_i$, then $(a,a_1] \subseteq \supt(h)$ and $sg \leq a_1 h$.
It follows $h$ has $(a,sg]$ as an initial part of its support.

If $tfg \geq tgf$, then we define $A_0 := \{p \gamma^n \mid n \leq 0\}$ and
an analogous argument gives the desired conclusion.
Next, using a similar argument construct an analogous dense $B_0 \subseteq [s,sg)$ such that if $b \in B_0$,
then there is an element
of $\gen{f,g}$ whose support has $[s,b)$ as a final segment.
If $a \in A_0$ and $b \in B_0$ with $a < b$, then let $h_0$ and $h_1$ be such that
$(a,sg]$ is an initial segment of the support of $h_0$ and $[s,b)$ is a final segment of the support of $h_1$.
Furthermore select $h_0$ and $h_1$ such that $$a < bh_0 < a h_1 < b$$
and set $h:= [h_0, h_1]$. 
The following are readily checked:
\begin{itemize}

\item $\supt(h) \subseteq (a,b)$;

\item $h \restriction (a,bh_0] = h_1 \restriction (a,bh_0]$
and  $h \restriction [ah_1,b) = h_0 \restriction [ah_1,b)$;

\end{itemize}
In particular, $h$ has no fixed points in $(a, bh_0]$ or in $[ a h_1 , b)$.
Furthermore, $bh_0 \cdot h = b h_1^{-1} h_0 h_1 = b h_0 h_1 > a h_1$ since $a < b h_0$.
It follows that $h$ has no fixed points in $(a,b)$ and hence $\supt(h) = (a,b)$.

Finally, define $A := A_0 \gen{f,g}$ and $B:= B_0 \gen{f,g}$ and observe that $A$ and $B$ are both dense in $J$.
Let $(x,y)$ be a maximal open interval containing $(s,sg)$ such that if $a \in A \cap (x,y)$ and $b \in B \cap (x,y)$
with $a < b$, then there is an element of $\gen{f,g}$ with support $(a,b)$.
It suffices to show that $(x,y) = J$.
Suppose for contradiction that this is not true --- then either $x$ or $y$ are in $J$.

If $x \in J$, let $h \in \{f^{\pm 1},g^{\pm 1}\}$ be such that $xh \in (x,y)$;
such an $h$ exists since $(s,sg) \subseteq (x,y)$.
Notice that $x' := xh^{-1} < x$.
Let $x' < a < b < y$ with $a \in A$ and $b \in B$.
It suffices to show that $(a,b)$ is the support of an element of $\gen{f,g}$ as this will
contradict the maximality of $(x,y)$.
If $x < a$, then $(a,b)$ is the support of an element of $\gen{f,g}$ by our choice of $(x,y)$.
Similarly, if $x' < a < b \leq x$, then $x < ah < bh < y$ and there is an $h_0 \in \gen{f,g}$ with support
$(ah,bh)$.
It follows that $h_0^{h^{-1}}$ has support $(a,b)$.
If $x' < a \leq x < b$, then $x < ah \leq xh$.
Let $b' \in B$ be such that $xh < b' < \min(bh,y)$.
Let $h_0 \in \gen{f,g}$ be a positive bump with support $(ah,b')$, noting that $h_0^{h^{-1}}$ has support $(a,b'h^{-1})$. 
Let $a' \in A$ be such that $a' < b' h^{-1} < b$ and let $h_1$ be a positive bump with support $(a',b)$.
Now $h_0^{h^{-1}} h_1$ is a positive bump with support $(a,b)$.
This gives the desired contradiction.
The case $y \in J$ is handled by an analogous argument. 
\end{proof}

Theorem \ref{F_embed} is an immediate consequence of Proposition \ref{dense_bumps} and
Brin's Ubiquity Theorem \cite{ubiquity}.

\section{A dichotomy for subgroups of $\PLoI$}

\label{resolvable:sec}

In this section we will prove Theorem \ref{semiconj_dichot}.

\begin{semiconj_dichot} 
Suppose that $G \leq \PLoI$
and $J$ is a resolvable orbital of $G$.
If $K_i$ $(i < n)$ is a sequence of orbitals of $G$,
then exactly one of the following is true:
\begin{enumerate}

\item \label{kernel}There is a $g \in G$ whose support intersects $J$ but is disjoint from $K_i$ for all $i < n$.
\item \label{semiconj}There is an $i < n$ and a monotone surjection
$\psi: K_i \to J$ which is $G$-equivariant.

\end{enumerate}
\end{semiconj_dichot}

This theorem will be proved through a series of lemmas.
The first gives a criterion for the existence of an equivariant surjection between orbitals
of a subgroup of $\HomeoI$.

\begin{lem} \label{semiconj_crit}
Suppose that $J$ is a resolvable orbital of $G \leq \HomeoI$ and $K$ is an orbital of $G$.
If there are nonempty open intervals $U$ and $V$ such that
\begin{enumerate}
\item $\overline{U} \subseteq J$ and $\overline{V} \subseteq K$ and
\item \label{intersecting} for all $g \in G$, $Ug \cap U \ne \emptyset$ if and only if $Vg \cap V \ne \emptyset$,	
\end{enumerate}
then there is a $G$-equivariant surjection $\psi:K \to J$ which is continuous and
monotone.
\end{lem}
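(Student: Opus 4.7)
The plan is to construct $\psi$ via a Dedekind-cut style formula. Writing $J = (j^-,j^+)$ and $K = (k^-,k^+)$, for each $x \in K$ I set
\[
\psi(x) := \sup\bigl(\{j^-\} \cup \{\inf Ug : g \in G,\ \sup Vg \le x\}\bigr),
\]
with the supremum taken in $[j^-,j^+]$. The guiding intuition is that the biconditional forces the translates $\{Vg\}_{g \in G}$ to sit inside $K$ in the same intersection pattern as $\{Ug\}_{g \in G}$ sit inside $J$, so the order-position of $x$ relative to the $Vg$'s pins down a corresponding order-position in $J$.

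With this definition, $G$-equivariance and monotonicity are bookkeeping: monotonicity is immediate from the set inclusion of the index sets, and equivariance, under the substitution $g \mapsto gh$, uses that every $h \in G$ is an orientation-preserving self-homeomorphism of $I$ and therefore commutes with $\sup$ and $\inf$, giving $\psi(xh) = \psi(x) h$.

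The substantive work is to show that $\psi$ takes values in $J$, is continuous, and is surjective onto $J$. For these I would invoke the resolvability of $J$, which says that $\{\supt(g) \cap J : g \in G\}$ is a basis for the topology of $J$. Resolvability supplies group elements whose $J$-supports lie inside arbitrarily small open subintervals of $J$, and, lifted through the biconditional, produces $V$-translates in $K$ realising prescribed order-positions. From this I would derive density of the image of $\psi$ in $J$, and then full continuity and surjectivity by the standard monotone-limit arguments used for semiconjugacies on the circle.

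The main technical obstacle I anticipate is that the biconditional gives only \emph{intersection} data about the translates, whereas the sup in the definition of $\psi$ implicitly requires \emph{order} data: the condition ``$\sup Vg \le x$ in $K$'' must translate coherently into a statement about the position of $Ug$ in $J$. My plan is first to upgrade the hypothesis to the bilateral form $Ug \cap Uh \neq \emptyset \iff Vg \cap Vh \neq \emptyset$ (by applying the given biconditional to $g h^{-1}$ and translating by $h$), and then to recover order from the nerve via the elementary fact that three pairwise disjoint open intervals $A,B,C$ in $\R$ satisfy ``$B$ lies between $A$ and $C$'' exactly when every open interval meeting both $A$ and $C$ also meets $B$. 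Applied within the families $\{Ug\}$ and $\{Vg\}$, with resolvability supplying sufficiently many small intervals, this combinatorial reconstruction identifies the linear orders on the two families, with the global orientation forced by the $G$-equivariance of both sides.
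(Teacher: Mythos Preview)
Your approach differs from the paper's in two ways. First, the paper does not write down an explicit formula for $\psi$; it replaces $V$ by the enlarged interval $V^* := \bigcup\{Vh : Uh \subseteq U\}$ and proves that with this replacement the intersection biconditional upgrades to preserve containment ($Ug \subseteq U \iff V^*g \subseteq V^*$) and compact containment. It then defines $\psi$ by its graph, declaring $(x,y) \in \psi$ iff $x \in \overline{Vg}$ whenever $y \in Ug$, and verifies well-definedness, continuity (via closed graph), equivariance, monotonicity, and surjectivity from there. Second, to extract order from intersection the paper works with \emph{linked} pairs of translates rather than betweenness of disjoint triples. Your betweenness idea is nonetheless sound: resolvability of $J$ makes $\{Ug\}$ rich enough that testing only against translates $Uh$ already determines which of three pairwise disjoint $Ug_i$ lies in the middle, and the biconditional transfers that to the $V$-side.

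There is, however, a genuine gap concerning orientation. Your formula can only produce a nondecreasing $\psi$, while the lemma promises merely a \emph{monotone} one, and the decreasing case does occur. Take $G \cong F$ acting diagonally on disjoint intervals $J,K \subset I$ with the $K$-action the mirror of the $J$-action, i.e.\ $g|_K = \rho \circ g|_J \circ \rho^{-1}$ for an orientation-reversing affine bijection $\rho: J \to K$; each $g|_K$ is increasing (two reversals cancel), so $G \leq \HomeoI$. Setting $V = \rho(U)$, the biconditional holds, but the unique continuous $G$-equivariant surjection $K \to J$ is $\rho^{-1}$, which is decreasing. Here your formula collapses: $\sup Vg \le x$ unravels to $\inf Ug \ge \rho^{-1}(x)$, so the supremum of the admissible $\inf Ug$ is $j^+ \notin J$. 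Your claim that $G$-equivariance forces the orientation is therefore false. The paper handles this by first proving a dichotomy --- either ``$U$ and $Ug$ linked with $\inf U < \inf Ug$'' implies the same for $V, Vg$ for all $g$, or it implies the reverse for all $g$ --- and deduces monotonicity in the corresponding sense. You would need an analogous case split, choosing between your formula and its order-dual.
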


\begin{proof}
Define
$$V^*:=\bigcup \{Vh \mid (h \in G) \mand (Uh \subseteq U)\}$$
and observe that $V^*$ is an open interval containing $V$.

\begin{claim} \label{upgrade}
For all $g \in G$:
\begin{enumerate}

\item \label{disjoint_equiv}
$Ug \cap U \ne \emptyset$
if and only if
$V^* g \cap V^* \ne \emptyset$;

\item \label{V*_bounded}
$\overline{V^*} \subseteq K$;

\item	 \label{containment_equiv}
$Ug \subseteq U$
if and only if
$V^* g \subseteq V^*$;

\item \label{strong_containment}
if $\overline{Ug} \subseteq U$, then
$\overline{V^* g} \subseteq V^*$.

\end{enumerate}
\end{claim}

\begin{proof}
Let $g \in G$.
If $U \cap Ug \ne \emptyset$, then
$\emptyset \ne V \cap V g \subseteq V^* \cap V^* g$.
Next, suppose that $x \in V^* \cap V^* g$ for some $g$ and
let $h_0$ and $h_1$ be such that
$Uh_0 \cup U h_1 \subseteq U$ and $x \in Vh_0 \cap Vh_1 g$.
It follows that $V \cap Vh_1 g h_0^{-1} \ne \emptyset$, which implies
$U \cap Uh_1 g h_0^{-1} \ne \emptyset$ which in turn implies
$U h_0 \cap U h_1 g \subseteq U \cap Ug \ne \emptyset$.
This establishes (\ref{disjoint_equiv}).

Observe that if $\overline{V^*}$ contains an endpoint of $K$,
then for any $g \in G$, $V^* g \cap V^* \ne \emptyset$.
On the other hand since $J$ is a resolvable orbital of $G$ and $\overline{U} \subseteq J$,
there is a $g \in G$ such that $U g \cap U = \emptyset$.
Thus (\ref{V*_bounded}) follows from (\ref{disjoint_equiv}).

We will now prove (\ref{containment_equiv}).
First suppose that $Ug \subseteq U$ for some $g \in G$.
If $y \in V^* g$, let $h$ be such that $Uh \subseteq U$ and $yg^{-1} \in Vh$.
Then $Uhg \subseteq Ug \subseteq U$ and so $y \in Vhg = Vhg \subseteq V^*$.
Suppose now $Ug$ is not contained in $U$.
Since $J$ is a resolvable orbital of $G$, there is an $h \in G$
such that $Uh$ intersects $Ug$ but not $U$.
It follows from (\ref{disjoint_equiv})
that $V^* h$ intersects $V^* g$ but not $V^*$ and
hence that $V^* g$ is not contained in $V^*$.

Finally, suppose that $\overline{Ug} \subseteq U$ for
some $g \in G$.
Since $J$ is a resolvable orbital of $G$, there are $h_0,h_1 \in G$
such that:
\begin{itemize}

\item $U h_0 \cap U h_1 = \emptyset$,

\item $U h_0 \cup U h_1 \subseteq U$,

\item $U h_0$ and $U h_1$ intersect $U g$ but neither are
contained in $U g$.
	
\end{itemize}
It follows from (\ref{disjoint_equiv}) and
(\ref{containment_equiv}) that these same conditions hold
of $V^*$ in place of $U$.
This implies that the endpoints of $V^* g$ are contained in
$V^* h_0 \cup V^* h_1$ and hence that
$\overline{V^* g} \subseteq V^*$.
\end{proof}

By replacing $V$ with $V^*$ if necessary, we may assume that $V$ has the additional properties of
$V^*$ in Claim \ref{upgrade} --- these will be referred to as the \emph{revised hypotheses on $U$ and $V$}.

Define $\psi$ to consist of all pairs
$(x,y) \in K \times J$ such that for all $g \in G$, whenever $y \in Ug$, $x \in \overline{Vg}$.
To see that $\psi$ is a function, suppose $y_0 \ne y_1$ are in $J$.
Since $J$ is a resolvable orbital of $G$, there are $g_i,h_i \in G$
such that $y_i \in Ug_i \subseteq \overline{Ug_i} \subseteq Uh_i$ and $Uh_0 \cap Uh_1 = \emptyset$.
By our revised hypotheses, $\overline{Vg_0} \cap \overline{Vg_1} = \emptyset$.
Hence there is no $x$ such that $(x,y_0)$ and $(x,y_1)$ are in $\psi$.
Since the graph of $\psi$ is closed, $\psi$ is continuous.
It also follows immediately from the definition that $(x,y) \in \psi$ if and only if $(xg,yg) \in \psi$ and hence
$\psi$ is $G$-equivariant.

Next let us say that two intervals are
\emph{linked} if neither is a subset of the other.
Observe that for any $g \in G$,
$U$ and $Ug$ are linked if and only if $V$ and $Vg$
are linked.
Also, a pair of intervals is linked if and only if 
each contains an endpoint of the other.
If $A$ and $B$ are intervals, then we will write $A <_l B$
to mean that the pair $A$, $B$ is linked and the left endpoint of $A$ is less than the left endpoint of $B$. 
Clearly if $A$ and $B$ is a linked pair of intervals, then exactly one of $A <_l B$ or $B <_l A$.

\begin{claim}
Either for all $g \in G$, $U <_l U g$ implies $V <_l V g$
or for all $g \in G$, $U <_l U g$ implies $V g <_l V$.
\end{claim}	

\begin{proof}
Observe that if $U <_l Ug$, then $Ug^{-1} <_l U$.
Hence if the claim is false, there are $g_0,g_1 \in G$ such that
$Ug_0 <_l U <_l U g_1$ and yet $V <_l Vg_0,Vg_1$.
Since $J$ is a resolvable orbital of $G$, there is an $h \in G$ such that $\supt(h) \cap J \subset U$
and $Ug_0 \cap Ug_1 h = \emptyset$.
Since $U h = U$, $Vh = V$ and because the right endpoint $V$ is in $Vg_1$,
it is also in $Vg_1 h$.
In particular this right endpoint is in both $Vg_0$ and $Vg_1 h$ while $Ug_0$ and
$Ug_1 h$ are disjoint, contrary to our hypothesis.
\end{proof}

\begin{claim}
$\psi$ is monotone.
\end{claim}

\begin{proof}
Suppose that for all $g,h \in G$, $U g <_l U h$ implies $V g <_l V h$.
Let $\psi(x_0) = y_0 < y_1 = \psi(x_1)$ and let $g_i \in G$
be such that $y_i \in U g_i$ and $U g_0 \cap U g_1 = \emptyset$.
By resolvability of $G$ on $J$, there is an $h$ such that
$Uh$ links both $U g_0$ and $U g_1$.
In particular,
$U g_0 <_l U h <_l U g_1$, which implies
$V g_0 <_l V h <_l V g_1$.
Since $x_i \in V g_i$ and $V g_0 \cap V g_1 = \emptyset$,
it follows that $x_0 < x_1$.
Similarly, if $U g <_l U h$ always implies $V g <_l V h$,
then $\psi$ is monotone decreasing.
\end{proof}

\begin{claim}
$\psi$ is a surjection from $K$ to $J$.
\end{claim}

\begin{proof}
In order to see that $\psi$ is a surjection,
let $y \in J$ be given.
By assumption,
$$\Fscr := \{\overline{Vg} \mid (g \in G) \mand (y \in Ug)\}$$
is a pairwise intersecting collection of intervals.
By our revised hypotheses on $U$ and $V$ and
by the resolvability of $G$ on $J$, $\Fscr$
contains elements whose closure is contained in $K$.
Thus $\phi^{-1}(y) = \bigcap \Fscr$ is a nonempty interval.

Now suppose that $x \in K$.
Since $\psi$ is a surjection, its domain is nonempty;
since $\psi$ is $G$-equivariant, its domain $X$ contains elements both to the left and right of $x$.
Set $$x_0:= \sup \{s \in X \mid s \leq x\} \qquad \quad x_1 := \inf \{s \in X \mid x \leq s\}.$$
Notice that since $\psi$ is a monotone surjection, $\psi(x_0) = \psi(x_1)$.
Since we have shown $\psi$-preimages of points are intervals, $x \in \dom (\psi)$.
\end{proof}

\end{proof}

Our strategy for proving Theorem \ref{semiconj_dichot} will now be to carefully select a subgroup $H \leq G$ whose support has nice properties which will allow
us to define intervals $U$ and $V$ as in Lemma
\ref{semiconj_crit}.
The first step toward this goal is the following lemma.
Recall that a group $G$ is \emph{perfect} if $G' = G$.

\begin{lem} \label{seed_subgrp}
Suppose that $J$ is a resolvable orbital of $G \leq \PLoI$ and $K$ is an orbital of $G$.
There is an $H \leq G$ such that:
\begin{enumerate}

\item $H$ is perfect;
\item $H$ has finitely many orbitals;
\item $\supt H \cap J$ is a resolvable orbital of $H$ with closure contained in $J$;
\item $\supt H \cap K$ has closure contained in $K$.

\end{enumerate}	
\end{lem}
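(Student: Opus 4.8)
The plan is to build $H$ by hand as a finitely generated subgroup supported on a small interval inside $J$, together with enough of its conjugates to control the support in $K$, and then replace it by its commutator subgroup to achieve perfectness. First I would use resolvability of $J$: pick a point $x_0 \in J$ and, using Proposition \ref{dense_bumps}-style density (or directly the definition of resolvable orbital), choose an element $a \in G$ whose support meets $J$ in a small interval $(p,q)$ with $\overline{(p,q)} \subseteq J$, small enough that finitely many $G$-translates of $(p,q)$ can be arranged in general position. Inside $(p,q)$, again using resolvability, I would find two bumps $a_0, a_1 \in G$ with $\supt(a_i) \cap J = (s_i,t_i)$ where $s_0 < s_1 < t_0 < t_1$ and (after possibly shrinking) $t_0 a_1 \leq s_1 a_0$, so that by Proposition \ref{fastF} the group $\gen{a_0|_J, a_1|_J}$ projected to $J$ is a copy of $F$; choosing the $a_i$ so that $\supt\gen{a_0,a_1} \cap J$ is itself a union of $F$-orbital intervals whose union $(s_0,t_1)$ is resolvable as an orbital of $\gen{a_0,a_1}$ restricted there. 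The key point is that $F$ is finitely generated with $F' $ still acting with the same support and $F'$ is perfect (indeed $F'' = F'$), so passing to the commutator subgroup of the two-generator group costs us nothing on $J$ while buying perfectness.

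Next I would address the orbitals of $H$ outside $J$, in particular the requirement that $\overline{\supt H \cap K} \subseteq K$. The subgroup $H_0 := \gen{a_0, a_1}$ already has finitely many orbitals since $a_0, a_1 \in \PLoI$ have finitely many breakpoints, so $\supt H_0$ is a finite union of open intervals; intersecting with $K$ gives a finite union of intervals, but a priori their closures could touch an endpoint of $K$. To fix this I would not take all of $\supt H_0$ but instead, using Lemma \ref{lem:mov} or a direct shrinking argument, replace $a_0, a_1$ from the outset by elements whose support intersected with each orbital $K$ of $G$ is a single relatively compact subinterval — this is arranged simply by choosing the initial interval $(p,q) \subseteq J$ small, then noting that a bump supported on a tiny interval in $J$ need not have small support elsewhere, so instead one works inside $J$ and then conjugates: the support of a bump and its support after conjugation by $g \in G$ still lies in a single orbital of $G$, and by composing with a suitable $\gen{a_0,a_1}$-element one can push the relevant part of the support of any generator into the interior of $K$. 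The cleanest route is: first prove the lemma with ``$\supt H \cap K$ has closure contained in $K$'' replaced by nothing (ignore $K$), getting perfect $H$ with finitely many orbitals and $\supt H \cap J$ a resolvable orbital with $\overline{\supt H \cap J} \subseteq J$; then observe that because $\supt H \cap K$ is a finite union of open intervals and $H$ is perfect, Lemma \ref{com_supt} gives $\overline{\supt H \cap K} \subseteq \supt G \cap \overline{K}$, and a further conjugation/truncation (replace $H$ by $H^w$ for a well-chosen $w \in G$, which preserves properties (1)--(3)) moves the $K$-part off the endpoints of $K$.

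The main obstacle I expect is the simultaneous satisfaction of conditions (3) and (4): we need $H$ large enough on $J$ to make $\supt H \cap J$ a \emph{resolvable} orbital of $H$ — which requires $H$ to contain, for a dense set of subintervals of $\supt H \cap J$, an element supported exactly there — while simultaneously keeping $\supt H \cap K$ bounded away from $\partial K$ and keeping $H$ finitely generated (so that it has finitely many orbitals). Resolvability of $\supt H \cap J$ should be extracted from resolvability of $J$ itself via Proposition \ref{dense_bumps}: the argument there shows that inside a copy of $F$ acting on an interval one already has the dense sets $A, B$ of bump endpoints, so it suffices to take $H$ to \emph{contain} a copy of $F$ whose support is the desired interval, and then note that the ambient $H$-support restricted to $J$ equals that $F$-support, inheriting resolvability. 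So the real work is packaging: (i) find two $G$-bumps in $J$ generating an $F$ with resolvable, relatively compact support in $J$ (resolvability via Proposition \ref{dense_bumps} applied to that copy of $F$, which is a basic configuration); (ii) take the commutator subgroup to get perfectness, checking via Lemma \ref{com_supt} that the support is unchanged on $J$; (iii) conjugate within $G$ to retract the finitely many other orbitals away from the endpoints of $K$. Each of (i)--(iii) is elementary $\PLoI$ bookkeeping once the statement is set up this way.
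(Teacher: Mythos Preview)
Your proposal has two genuine gaps.

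The more serious one is your treatment of condition (4). You propose to ``replace $H$ by $H^w$ for a well-chosen $w \in G$'' to move $\supt H \cap K$ off the endpoints of $K$. But $K$ is an orbital of $G$, so every $w \in G$ carries $K$ to itself and fixes its endpoints; hence $(\supt H \cap K)w$ accumulates on exactly the same endpoints that $\supt H \cap K$ did. Conjugation cannot help here. Your appeal to Lemma~\ref{com_supt} also does not close the gap: it bounds the closure of the support of each \emph{individual} element of $H_0'$ inside $\supt H_0$, but an orbital of the group $H_0'$ is a union of such supports and can still reach an endpoint of an orbital $L_i$ of $H_0$, and nothing prevents $\sup L_i = \sup K$. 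You mention ``truncation'' as an alternative, but give no reason why the projection $H|_S$ to a proper subfamily of orbitals should lie in $G$ rather than merely in $\PLoI$. In the paper this is exactly the substantive step: one chooses $g \in G$ with $Ug \cap U = \emptyset$, forms $\gen{H \cup H^g}$, takes $N$ to be the normal closure of $H^g$ therein, and invokes the projection Lemma~\ref{projection_lem} to show that $H|_S$ (with $S$ the union of orbitals of $\gen{H \cup H^g}$ that are not orbitals of $N$) is genuinely a subgroup of $G$. That lemma requires a careful commutator computation and is not a consequence of conjugation or elementary shrinking.

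The second gap is your claim that passing to $\gen{a_0,a_1}'$ ``buys perfectness.'' You only know that the $J$-projection $\gen{a_0|_J,a_1|_J} \cong F$, so $\gen{a_0,a_1}'|_J \cong F'$ is perfect; but $\gen{a_0,a_1}'$ itself need not be perfect, since on the other orbitals $L$ the group $\gen{a_0,a_1}|_L$ is an unknown quotient of $\gen{a_0,a_1}$ and there is no reason its derived series stabilises after one step (nor that iterating commutator subgroups yields a group with finitely many orbitals). The paper avoids this by starting not from a two-generator subgroup but from $H_0 := \{g \in G \mid \supt(g) \cap J \subseteq U\}''$, then using Lemmas~\ref{lem:per}, \ref{lem:persubgrp} and~\ref{lem:sup} to extract a perfect subgroup with finitely many orbitals and the same $J$-projection. (A smaller point: Proposition~\ref{dense_bumps} concerns basic $F$-obstructions, not fast generating pairs, so it does not directly certify resolvability of your $\gen{a_0,a_1}$-orbital in $J$.)
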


This lemma will itself be proved though a series of
lemmas.

\begin{lem} \label{lem:com}
If $A,B \leq \HomeoI$ and
for each $a \in A$ we have 
$\overline{\supt}(a) \subseteq \supt B$,
then $A' \leq \gen{[[A,B],[A,B]]}$.
\end{lem}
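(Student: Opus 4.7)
The plan is to prove the stronger assertion that every commutator $[a_1, a_2]$ with $a_1, a_2 \in A$ already lies in $[[A,B],[A,B]]$ \emph{as an element} (not merely in the subgroup it generates); from this $A' \leq \gen{[[A,B],[A,B]]}$ follows at once since $A'$ is generated as a subgroup by such commutators.

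I would fix $a_1, a_2 \in A$ and set $\Sigma := \overline{\supt}(a_1) \cup \overline{\supt}(a_2)$. By hypothesis $\Sigma$ is a compact subset of $\supt B$, so Lemma \ref{lem:mov} applied with $G = B$ and $X = \Sigma$ would produce a single $b \in B$ with $\Sigma b^k \cap \Sigma = \emptyset$ for every nonzero $k$. In particular the three sets $\Sigma$, $\Sigma b$, $\Sigma b^2$ are pairwise disjoint (note $\Sigma b \cap \Sigma b^2 = (\Sigma \cap \Sigma b)b = \emptyset$). Setting $b_1 := b$ and $b_2 := b^2$, both in $B$, the central identity I aim to verify is
\[
\bigl[\,[a_1^{-1}, b_1],\ [a_2^{-1}, b_2]\,\bigr] \;=\; [a_1, a_2],
\]
which manifestly places $[a_1, a_2]$ in $[[A,B],[A,B]]$.

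To verify this I would first rewrite $[a_i^{-1}, b_i] = a_i \cdot a_i^{-b_i}$, observing that the two factors commute because their supports lie in the disjoint regions $\Sigma$ and $\Sigma b_i$. Expanding the outer commutator with $P := a_1 a_1^{-b_1}$ and $Q := a_2 a_2^{-b_2}$ produces an eight-letter word whose letters partition by support into three groups: the $\Sigma$-supported letters $a_1^{\pm 1}, a_2^{\pm 1}$; the $\Sigma b$-supported letters $a_1^{\pm b_1}$; and the $\Sigma b^2$-supported letters $a_2^{\pm b_2}$. Pairwise disjointness of the three regions means letters from different groups commute freely, so the word can be sorted by group while preserving the order within each group. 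The $\Sigma b$-group collapses to $a_1^{b_1} a_1^{-b_1} = 1$, the $\Sigma b^2$-group to $a_2^{b_2} a_2^{-b_2} = 1$, and the $\Sigma$-group telescopes to $a_1^{-1} a_2^{-1} a_1 a_2 = [a_1, a_2]$.

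The main obstacle --- really the only real work --- is the bookkeeping in the sorting step, and this is mechanical once the three-way disjointness is in hand. The crucial design choice is taking $b_1$ and $b_2$ to be \emph{distinct} powers of $b$ so that the ``right halves'' of the two inner commutators land in different disjoint copies of $\Sigma$; if one instead took $b_1 = b_2$, those right halves would live in a single region where their supports could overlap, obstructing the cancellation.
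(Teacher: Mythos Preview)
Your proof is correct and follows essentially the same approach as the paper. The paper also shows each commutator $[a_0,a_1]$ is literally an element of $[[A,B],[A,B]]$ by applying Lemma~\ref{lem:mov} to $X = \overline{\supt}\{a_0,a_1\}$; the only cosmetic difference is that the paper uses the three pairwise disjoint regions $Xb^{-1}$, $X$, $Xb$ (with inner commutators $[b^{-1},a_0]$ and $[b,a_1]$) in place of your $\Sigma$, $\Sigma b$, $\Sigma b^2$.
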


\begin{proof}
Suppose that $a_0,a_1 \in A$ are arbitrary; it suffices
to show that $[a_0,a_1] \in [[A,B],[A,B]]$. 
Set $$X := \overline{\supt}\{a_0,a_1\}.$$
By Lemma \ref{lem:mov}, there is a $b \in B$ such that
both $Xb$ and $Xb^{-1}$ are disjoint from $X$.
Since $(a_1^{-1})^b$ is supported on $Xb$ and $(a_0^{-1})^{b^{-1}}$ is supported on $X b^{-1}$, these terms
commute with each other and with $a_0$ and $a_1$, which are supported on $X$.
Thus 
$$\big[[b^{-1},a_0],[b,a_1]\big] = \big[(a_0^{-1})^{b^{-1}} a_0, (a_1^{-1})^b a_1\big] = [a_0,a_1]$$ 
is in $[[A,B],[A,B]]$ as desired.
\end{proof}

\begin{lem} \label{lem:per}
Let $G$ be a subgroup of $\PLoI$.
If $\supt G' = \supt G$,
then $\supt G'' = \supt G$ and $G''$ is perfect.
\end{lem}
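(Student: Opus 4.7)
My plan is to deduce both conclusions using Lemma \ref{lem:com} together with one short direct commutator computation. The hypothesis $\supt G' = \supt G$ enters through Lemma \ref{com_supt}, which tells us that for every $a \in G'$, $\overline{\supt}(a) \subseteq \supt G = \supt G'$; this is precisely the support condition needed both for Lemma \ref{lem:com} (with $B = G'$) and for Lemma \ref{lem:mov} applied inside $G'$.

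For the perfectness of $G''$, I would apply Lemma \ref{lem:com} with $A = B = G'$. The support condition was just verified, so the conclusion reads
\[
G'' = A' \leq \gen{[[G',G'],[G',G']]} \leq \gen{[G'',G'']} = G''',
\]
and combined with $G''' \leq G''$ this yields $G'' = G'''$.

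For the support equality, the inclusion $\supt G'' \subseteq \supt G' = \supt G$ is immediate. For the reverse, I would fix $x \in \supt G'$, choose $a \in G'$ with $xa \neq x$, and set $X := \overline{\supt}(a) \subseteq \supt G'$. Applying Lemma \ref{lem:mov} inside $G'$ yields a $c \in G'$ with $Xc^k \cap X = \emptyset$ for every nonzero $k$. Expanding $[a,c] = a^{-1}c^{-1}ac$ applied to $x$: the point $xa^{-1}$ lies in $X$ (because $X$ is $a$-invariant), so $xa^{-1}c^{-1} \in Xc^{-1}$ is disjoint from $\supt(a) \subseteq X$ and is therefore fixed by $a$; the outer $c$ then cancels the inner $c^{-1}$. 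This gives $x[a,c] = xa^{-1} \neq x$, so $[a,c] \in G''$ witnesses $x \in \supt G''$.

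I expect the only real obstacle to be arranging this final computation. Both parts rely on the same ``move the support off itself'' mechanism, but the support claim cannot be quoted directly from Lemma \ref{lem:com}, because that lemma rewrites a commutator as a nested commutator supported on the same set as the original and therefore does not by itself show that $G''$ acts nontrivially on all of $\supt G$. Instead one has to invoke Lemma \ref{lem:mov} inside $G'$ to displace $X$ and then read off the value of $[a,c]$ at the prescribed point $x$.
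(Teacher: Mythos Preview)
Your proof is correct and follows essentially the same approach as the paper: both use Lemma~\ref{lem:com} with $A=B=G'$ for perfectness, and both use Lemma~\ref{lem:mov} inside $G'$ together with a direct commutator evaluation for the support claim. The only cosmetic difference is that the paper writes $[f,g]=(g^{-1})^f g$ and reads off $x[f,g]=xg$, whereas you expand $[a,c]=a^{-1}c^{-1}ac$ and read off $x[a,c]=xa^{-1}$; these are the same idea with the roles of the two elements swapped.
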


\begin{proof}
If $a \in G'$, then by Lemma \ref{com_supt} $$\overline{\supt}(a) \subseteq \supt G = \supt G'.$$
Thus we may apply Lemma \ref{lem:com} to $A = B = G'$ and obtain that
$$G'' \leq \gen{[[G',G'],[G',G']]} \leq G'''.$$
Thus $G'' = G'''$ and $G''$ is perfect.
To see that $\supt G'' = \supt G$,
suppose that $x \in \supt G$.
By assumption, there is a $g \in G'$ such that $x \in \supt(g)$.
By Lemmas \ref{com_supt} and \ref{lem:mov}, there is an $f \in G'$ such that the supports of $g$ and $g^f$
are disjoint.
It follows that $xg = x[f,g]$ and therefore that $x$ is in the support of $[f,g] \in G''$. 
\end{proof}

\begin{lem} \label{lem:persubgrp}
If $G \leq \PLoI$ is perfect and $H \leq G$ is a normal subgroup with
$\supt H = \supt G$, then $G = H$.
\end{lem}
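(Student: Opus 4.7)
The plan is to show $G \leq H$ by exploiting Lemma \ref{lem:com} with the choice $A = G$ and $B = H$. Once the hypotheses are verified, the normality of $H$ forces the iterated commutator subgroup on the right-hand side of the conclusion of Lemma \ref{lem:com} to lie inside $H$, and perfectness of $G$ closes the loop.

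First I would verify the hypothesis of Lemma \ref{lem:com} for $A = G$, $B = H$: we need $\overline{\supt}(a) \subseteq \supt H$ for every $a \in G$. Since $G$ is perfect, $a \in G = G'$, so Lemma \ref{com_supt} gives $\overline{\supt}(a) \subseteq \supt G$, and by assumption $\supt G = \supt H$. So the hypothesis holds.

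Next, Lemma \ref{lem:com} yields
\[
G' \leq \gen{[[G,H],[G,H]]}.
\]
Since $H$ is normal in $G$, for any $g \in G$ and $h \in H$ we have $[g,h] = g^{-1}(h^{-1})^{g} \cdot h^{-1} \cdot h \in H$ (more directly, $h^g \in H$ and hence $[g,h] = (h^{-1})^{g} h \in H$). Thus $[G,H] \subseteq H$, and in particular $[[G,H],[G,H]] \subseteq [H,H] \subseteq H$. Combining this with the display and using perfectness of $G$ one final time,
\[
G = G' \leq \gen{[[G,H],[G,H]]} \leq H,
\]
which gives $G = H$ as desired.

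There is no real obstacle here; the only thing to be careful about is the bookkeeping that lets us apply Lemma \ref{lem:com} to $A = G$ rather than to $A = G'$. Perfectness of $G$ is exactly what makes this legitimate, since it forces every element of $G$ (not just of $G'$) to have closed support inside $\supt G = \supt H$. Normality of $H$ then takes care of pushing the double commutator back inside $H$, and the proof collapses in one line.
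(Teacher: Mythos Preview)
Your proof is correct and follows essentially the same route as the paper: apply Lemma~\ref{lem:com} with $A=G$ and $B=H$, using perfectness of $G$ together with Lemma~\ref{com_supt} to verify the hypothesis, and then use normality of $H$ to see that $\gen{[[G,H],[G,H]]} \leq H$. The paper's proof is simply a more compressed version of what you wrote.
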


\begin{proof}
Since $H$ is a normal subgroup of $G$, $\gen{[[G,H],[G,H]]} \leq H$.
By Lemma \ref{com_supt} $\overline{\supt}(g) \subseteq \supt G = \supt H$ for every $g \in G$.
Applying Lemma \ref{lem:com} to $A = G$ and $B = H$,
we obtain that $G = G' \leq \gen{[[G,H],[G,H]]} \leq H$.
\end{proof}

\begin{lem} \label{lem:sup}
Let $H_0 \leq \PLoI$ and $J$ be an orbital
of $H_0$ such that $H_0|_J$
is both perfect and the normal closure of a single element.
Then there exists a perfect subgroup $H$ of $H_0$ with finitely many orbitals
such that $H|_J = H_0 |_J$.
\end{lem}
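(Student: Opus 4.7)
My plan is to construct $H$ directly as a subgroup of $H_0$ supported in $J$, using commutators of conjugates of a single lift of a normal generator of $H_0|_J$.

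Fix a normal generator $\bar g \in H_0|_J$ and a lift $g \in H_0$. For $k, h \in H_0$, the commutator $[g^k, g^h]$ lies in $H_0$ and projects to $[\bar g^{\bar k}, \bar g^{\bar h}]$ on $J$; moreover its support outside $J$ is contained in the intersection of the supports of $g^k$ and $g^h$ outside $J$. Consequently, if the lifts $k, h$ are chosen so that these outside-$J$ supports are disjoint, then $[g^k, g^h]$ is supported entirely in $J$. Let $H$ be the subgroup of $H_0$ generated by all such ``separated'' commutators. By construction $H$ is supported in $J$, so $H$ has a single orbital and the restriction map $H \to H|_J$ is an isomorphism.

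To show $H|_J = H_0|_J$, I would use the perfectness hypothesis to write $H_0|_J = (H_0|_J)'$ and then invoke the normal generation together with the standard commutator identities ($[xy,z] = [x,z]^y [y,z]$, etc.) to conclude that $(H_0|_J)'$ is generated by the basic commutators $\{[\bar g^{\bar k}, \bar g^{\bar h}] : \bar k, \bar h \in H_0|_J\}$ (conjugates of such commutators are again of this form). Hence it suffices to realize every such basic commutator as a separated commutator in $H_0$, i.e.\ to produce, for each pair $\bar k, \bar h \in H_0|_J$, lifts $k, h \in H_0$ whose $g$-conjugates have disjoint supports outside $J$. Granted this, $H|_J = H_0|_J$ is perfect, and therefore so is $H$ via the restriction isomorphism, yielding the conclusion.

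The main obstacle is producing the separated lifts. Given arbitrary lifts $k_0, h_0$ of $\bar k, \bar h$, the strategy is to modify them to $k = k_0 n_1$ and $h = h_0 n_2$ with $n_1, n_2 \in N := \ker(\rho: H_0 \to H_0|_J)$ chosen so as to displace the outside-$J$ supports apart. I expect this uses Lemma \ref{lem:mov} applied to the action of $N$ on $I \setminus J$, exploiting that $g \in \PLoI$ has only finitely many orbitals outside $J$ to reduce the problem to displacing finitely many compact sets. The delicate point is handling orbitals of $H_0$ outside $J$ where $N$ may act too restrictively to move the relevant supports; in such cases one would need to argue that these orbitals can be disregarded without affecting the projection to $J$, presumably by decomposing $H_0$ according to how $N$ acts on the non-$J$ orbital structure and handling each piece in turn.
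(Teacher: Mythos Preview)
Your approach has a genuine gap: you aim to produce $H$ supported entirely in $J$ (a single orbital), but this is in general impossible, so the ``delicate point'' you flag is fatal rather than merely technical. Consider the diagonal subgroup $H_0 = \{(f,f) : f \in F'\}$ acting on two disjoint intervals $J$ and $K$ (rescaled copies of $(0,1)$). Then $H_0|_J \cong F'$ is simple, hence perfect and the normal closure of any nonidentity element. But the restriction $H_0 \to H_0|_J$ is an isomorphism, so the kernel $N$ is trivial and there is no way to modify lifts to separate supports on $K$. Any $H \leq H_0$ with $H|_J = H_0|_J$ must equal $H_0$ and thus has two orbitals. The lemma only asks for \emph{finitely many} orbitals, and that is all one can get.

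The paper's proof embraces this by allowing $H$ to be supported on several orbitals of $H_0$. It passes to $H_0''$, takes the normal closure $H_1$ of a suitable $h \in H_0''$ and then the normal closure $H_2$ of $h$ in $H_1$, and splits the orbitals of $H_0$ meeting $\supt H_2$ into those that are already orbitals of $H_2$ (the set $\Uscr$) and those that are not (the set $\Vscr$). The point is that $\Uscr$ is finite: since $H_1 \leq H_0$ preserves the orbitals of $H_0$, each orbital in $\Uscr$ must contain an orbital of the single PL element $h$. On the $\Vscr$ side one shows $\overline{\supt}\,H_2|_V \subseteq \supt H_0''$, so a single conjugation by some $g \in H_0$ (via Lemma~\ref{lem:mov}) disjoins the $\Vscr$-parts of $H_2$ and $H_2^g$; then $H := \gen{[H_2^g,H_2]}$ is supported on $U = \bigcup \Uscr$, and Lemmas~\ref{lem:per} and~\ref{lem:persubgrp} force $H = H_0''|_U$, which is perfect with finitely many orbitals. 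Your vague suggestion to ``decompose $H_0$ according to how $N$ acts'' is pointing in this direction, but the actual mechanism---distinguishing orbitals shared with $H_2$ from those that are not, and bounding the former by the orbitals of a single PL element---is the missing idea.
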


\begin{proof}
Since $H_0 |_J$ is perfect $H_0'' |_J = H_0 |_J$.
Fix $h \in H_0''$ with the normal closure of $h|_J$ in $H_0|_J$ equal to $H_0|_J$.
Let $H_1$ be the normal closure of $h$ in $H_0''$ and let $H_2$ be the normal closure of $h$ in $H_1$.
Define $\Uscr$ to be the orbitals of $H_0$ that are also orbitals of $H_2$ and set $U:=\bigcup \Uscr$.
Define $\Vscr$ to be the orbitals of $H_0$ which intersect $\supt H_2$ but are not orbitals of $H_2$ and
set $V:=\bigcup \Vscr$.

\begin{claim} \label{H2_V_small}
$\overline{\supt} H_2|_V$ is contained in $\supt H''_0$.
\end{claim} 

\begin{proof}
We will first argue that if $L$ is an orbital of $H_1 |_V$, then the closure of $L$ is contained
in the support of $H''_0$.
To see this, suppose $L$ is an orbital of $H_1 |_V$.
If $L$ is an orbital of $H''_0$, then Lemma \ref{lem:persubgrp} implies $H''_0 |_L = H_1 |_L$.
Since this in turn implies $H_2 |_L = H_1 |_L = H''_0 |_L$ and that $L$ is in $\Uscr$, this is impossible.
Let $g \in H''_0$ be such that $Lg \not \subseteq L$ .
Since $Lg$ is contained in the support of $H$ and $L$ is an orbital of $H_1$, 
$Lg$ must be disjoint from $L$ and in particular the closure of $L$ is contained in $\supt H''_0$.

Now let $X$ be the closure of the union of the orbitals of $H_1 |_V$ which intersect $\supt (h)$.
We have shown that $X \subseteq \supt H''_0$.
Since $X$ is $H_1$-invariant, $\supt H_2|_V \subseteq X$.
\end{proof}

By Claim \ref{H2_V_small}, we may find $g \in H_0$ such that 
$V \cap \supt H_2 \cap \supt H_2^g = \emptyset$.
Define $H:=\gen{ [H_2^g,H_2] }$, noting that
$\supt H \subseteq U$.
By Lemma \ref{lem:per}, $H''_0|_U$ is perfect.
By Lemma \ref{lem:persubgrp}, $H_1|_U = H''_0 |_U$ and therefore $H_2 |_U = H''_0|_U$.
Since
$H_0''$ is a normal subgroup of $H_0$, we have ${H_0''}^g = H_0''$.
Putting this all together, we obtain
\begin{align*}      
H = H |_U = \gen{ [H_2^g|_U,H_2|_U]} &= \gen{[{H''_0}^g|_U,H''_0|_U]} \\
& = \gen{[H''_0|_U,H''_0|_U]} = H'''_0|_U = H''_0|_U.
\end{align*}
Thus $H=H''_0|_U$ is perfect and has finitely many components of support.
\end{proof}

\begin{lem} \label{projection_lem}
Suppose that $A,B \leq \PLoI$ are perfect groups 
and $N$ is the normal closure of $B$ in $\gen{A \cup B}$.
If $S$ denotes the union of the orbitals of $\gen{A \cup B}$ which are not orbitals of $N$,
then $A|_S$ is contained in $\gen{A \cup B}$.
\end{lem}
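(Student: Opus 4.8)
The plan is to exploit the same commutator trick that underlies Lemmas \ref{lem:com} and \ref{lem:persubgrp}, but now applied to the pair $(A,N)$ instead of $(A,B)$, and then to project everything to $S$. First I would observe that since $B \leq N$ and $N \trianglelefteq \gen{A \cup B}$, the group $N$ is a normal subgroup of $\gen{A\cup B}$ whose support equals $\supt B$ together with its $\gen{A\cup B}$-orbit; in particular $\supt N$ is a union of orbitals of $\gen{A\cup B}$. By the very definition of $S$ as the union of the orbitals of $\gen{A\cup B}$ that are \emph{not} orbitals of $N$, every orbital $L$ of $\gen{A\cup B}$ meeting $S$ either is disjoint from $\supt N$ or is an orbital of $\gen{A\cup B}$ that $N$ fails to fill --- i.e.\ $\supt N \cap L$ is a proper (and $N$-invariant) subset of $L$.

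Next I would analyze $N|_S$. The key point is that $\overline{\supt}(N|_S)$ is a proper subset of $S$, indeed that its closure is contained in $\supt\gen{A\cup B}$ but "leaves room" inside each orbital of $\gen{A\cup B}$ meeting $S$. This is exactly the phenomenon extracted in Claim \ref{H2_V_small}: on an orbital $L$ of $\gen{A\cup B}$ that is not an orbital of $N$, the set $\supt N \cap L$ is $N$-invariant (hence, being a union of orbitals of the subgroup $N|_L$ of $\PLoI$, its closure misses at least one endpoint of $L$), and moving $L$ by an element of $\gen{A\cup B}$ that does not preserve $L$ pushes $\supt N \cap L$ off itself. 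Consequently, by Lemma \ref{lem:mov} applied inside $\gen{A\cup B}$ to the compact pieces of $\overline{\supt}(N|_S)$ --- or more cleanly, by finding a single $g \in \gen{A\cup B}$ with $S \cap \supt N \cap (\supt N)g = \emptyset$, exactly as in the passage preceding the definition of $H$ in Lemma \ref{lem:sup} --- we get that $N|_S$ and $N^g|_S$ have disjoint supports.

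With such a $g$ in hand, set $H := \gen{[N^g, N]}$, so that $\supt H$ is disjoint from $S$ — wait, that is the wrong projection. Instead I would run the argument to land $A|_S$ inside $\gen{A\cup B}$ directly: since $A$ is perfect, $A = A'$, and by Lemma \ref{com_supt} $\overline{\supt}(a) \subseteq \supt A \subseteq \supt N \cup S$ is covered by $\supt\gen{A\cup B} = \supt N$ together with $S$. Restricting to $S$, for each $a \in A$ the set $\overline{\supt}(a|_S)$ lies in $\supt(\gen{A\cup B}|_S)$. Now apply Lemma \ref{lem:com} with $A$ replaced by $A|_S$ and $B$ replaced by $\gen{A\cup B}|_S$ to conclude $A|_S = (A|_S)' \leq \gen{[[A|_S, \gen{A\cup B}|_S], [A|_S, \gen{A\cup B}|_S]]}$. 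The final step is to recognize each such double commutator as the restriction to $S$ of an honest element of $\gen{A\cup B}$: a commutator $[[\alpha_0, \beta_0],[\alpha_1,\beta_1]]$ with $\alpha_i \in A$, $\beta_i \in \gen{A\cup B}$ restricts to $S$ to the corresponding projected commutator, so each generator of the right-hand side, hence all of $A|_S$, is the image under $f \mapsto f|_S$ of an element of $\gen{A\cup B}$. Therefore $A|_S \subseteq \gen{A\cup B}$.

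**The main obstacle** I expect is the bookkeeping that makes "$N|_S$ has support whose closure stays inside $\supt\gen{A\cup B}$ and avoids pieces of $S$" precise enough to produce the single sliding element $g$ — in other words, verifying that on each orbital $L$ of $\gen{A\cup B}$ with $L \cap S \neq \emptyset$ the closure of $\supt N \cap L$ is a compact subset of $\supt\gen{A\cup B}$ to which Lemma \ref{lem:mov} can be applied. Once that is done, the commutator manipulation is a routine repetition of Lemmas \ref{lem:com}–\ref{lem:persubgrp} with all groups replaced by their projections to $S$, using that projection to $S$ is a homomorphism $\PLoI \to \PLoI$ and that $A$ (hence $A|_S$) is perfect.
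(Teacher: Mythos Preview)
Your proposal has a genuine gap in its final step.  You correctly observe that projection to $S$ is a homomorphism, so
\[
[[\alpha_0,\beta_0],[\alpha_1,\beta_1]]\big|_S \;=\; [[\alpha_0|_S,\beta_0|_S],[\alpha_1|_S,\beta_1|_S]]
\]
for $\alpha_i\in A$, $\beta_i\in\gen{A\cup B}$.  From this you conclude that every element of $A|_S$ is ``the image under $f\mapsto f|_S$ of an element of $\gen{A\cup B}$''.  But that statement only says $A|_S\subseteq\gen{A\cup B}|_S$, which is trivially true because $A\leq\gen{A\cup B}$.  The lemma asks for something much stronger: that each $a|_S$, regarded as a homeomorphism of $I$ which is the identity off $S$, actually \emph{lies in} $\gen{A\cup B}$.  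For that you would need the lifted double commutator $[[\alpha_0,\beta_0],[\alpha_1,\beta_1]]$ to be the identity on $I\setminus S$, and nothing in your application of Lemma~\ref{lem:com} to the projected groups controls what the lifts do outside $S$.  Your entire chain of inclusions lives in $\gen{A\cup B}|_S$, and the closing ``Therefore $A|_S\subseteq\gen{A\cup B}$'' is a non sequitur.

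The paper's proof avoids this by working with $N$ rather than with $\gen{A\cup B}|_S$.  The crucial structural fact is that on $I\setminus S$ the orbitals of $\gen{A\cup B}$ coincide with orbitals of $N$, so $N$ is already large enough \emph{outside} $S$ to slide supports off one another.  Concretely, for $f,g\in A$ one sets $X=\overline{\supt\{f,g\}\setminus S}$, checks (using $A=A'$ and Lemma~\ref{com_supt}) that $X\subseteq\supt N$, and then uses Lemma~\ref{lem:mov} to find $h\in N$ with $Xh\cap X=\emptyset$.  Then $[f,g^h]$ is an honest element of $\gen{A\cup B}$ whose support is contained in $S$, so $[f,g^h]=[f,g^h]|_S\in\gen{A\cup B}$; an algebraic identity shows $[f,g^h]$ differs from $[f,g]$ by an element of $N$.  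A separate (and similar) argument then shows $N|_S\subseteq\gen{A\cup B}$, and the two pieces combine to give $a|_S\in\gen{A\cup B}$ for every $a\in A$.  The moral is that to produce elements of $\gen{A\cup B}$ (not merely of its projection) supported on $S$, the conjugating element must come from $N$, because only $N$ is guaranteed to act with full orbitals on the complement of $S$.
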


\begin{proof}
We will first show that for all $a \in A$ there exists $b \in N$ such that $ab|_S$ is in $\gen{A \cup B}$.
Define $A_0$ to be the set of all $a \in A$ such that there exists $b \in N$ with
$ab|_S$ is in $\gen{A \cup B}$.
Since $A$ is perfect, it suffices to show that
$[A,A] \subseteq A_0$ and that $A_0$ is a group.

Toward showing $[A,A] \subseteq A_0$,
let $f,g \in A$ and set $X$ equal to the closure of $\supt \{f,g\} \setminus S$.
Since $A$ is perfect, Lemma \ref{com_supt} implies $X$ is contained in $\supt N$.
By Lemma \ref{lem:mov} there is an $h \in N$ such that $Xh \cap X = \emptyset$.
Since $f$ and $g^h$ have disjoint supports, $[f,g^h]$ agrees with the identity on $I \setminus S$. 
In particular $[f,g^h]|_S = [f,g^h]$ is in $\gen{A \cup B}$. 
We may rewrite
$[f,g^h]$ as $[f,g] {(h^{-1})}^{(f^g)} h^{f g} (h^{-1})^g h$. 
Since ${(h^{-1})}^{(f^g)} h^{fg} (h^{-1})^g h$ is in $N$ we have shown that $[f,g]$ is in $A_0$. 

It remains to show that $A_0$ is closed under composition and taking inverses.
Let $a_0,a_1 \in A_0$ and fix $b_0,b_1 \in N$ with $a_0 b_0|_S$ and $a_1 b_1|_S$ in $\gen{A \cup B}$.
Since $a_0a_1 b_0^{a_1}b_1|_S= (a_0b_0)|_S (a_1b_1)|_S$ is in $\gen{A \cup B}$ and $b_0^{a_1}b_1$ is in $N$,
it follows that $a_0 a_1 \in A_0$.
Since $(a_0b_0)^{-1} |_S = a_0^{-1}{(b_0^{-1})}^{a_0^{-1}}|_S$ is in $\gen{A \cup B}$
and ${(b_0^{-1})}^{a_0^{-1}}$ is in $N$,
it follows that $a_0^{-1} \in A_0$.
Since $a_0,a_1 \in A_0$ were arbitrary, $A_0$ is closed under multiplication and
taking inverses and hence is a group.
Thus $A_0 = A$.

Next we will show that $N |_S$ is contained in $\gen{A \cup B}$.
Notice that this is sufficient to complete the proof since if $a \in A$, then for some $b \in N$, $(ab) |_S$ is
in $\gen{A \cup B}$ and hence $a |_S = (ab) |_S (b^{-1}) |_S$ is in $\gen{A \cup B}$.
Since $B$ is perfect, it is generated by $[B,B]$ and hence $N$ is the normal closure of
$[B,B]$ in $\gen{A \cup B}$.
Thus it suffices to show that if $b_0,b_1 \in B$, then $[b_0,b_1]|_S$ is in $\gen{A \cup B}$. 
Toward this end, let $b_0,b_1 \in B$ be arbitrary.
Observe that any endpoint of a connected component $U$ of $S$ is a limit point
of $U \setminus \supt N$.
Thus there is a set $X \subseteq S$ which is a finite union of intervals with endpoints
in $S \setminus \supt N$ such that $\supt \{b_0,b_1\}  \cap S \subseteq X$.

We now claim that $X$ is contained in the support of $A$.
If there were an $x \in X$ fixed by every element of $A$, then $x$ is in some component $L$ of the support
of $B$.
In this case, however, the union of the translates of $L$ by elements of $\gen{A \cup B}$ is an orbital
of both $N$ and $\gen{A \cup B}$, contradicting that $L$ is contained in $S$.
Thus it must be that $X$ is contained in the support of $A$.

By Lemma \ref{lem:mov}, there is an $a \in A$ such that $Xa \cap X = \emptyset$.
Thus there is a $g \in N$ such that $h:=(a g) |_S$ is in $\gen{A \cup B}$.
We will be finished once we show that $[[h ,b_0],b_1] =  [b_0,b_1] |_S$.
Define $Y:=S \setminus X$ and $Z:= I \setminus S$ and set $c:=(b_0^{-1})^h$.
Observe that $X$, $Y$, and $Z$ are all invariant under $b_0$, $b_1$ and $c$.
Furthermore $[h,b_0] = c b_0$ agrees with $b_0$ on $X$ and the identity on $Z$.
Since $b_1$ agrees with the identity on $Y$,
we have that $[[h ,b_0],b_1] = [cb_0,b_1]$ coincides with $[b_0,b_1]$ on $X$ and is
the identity elsewhere.
Since $[b_0,b_1]$ is the identity on $Y$,
it follows that $[[h ,b_0],b_1] =  [b_0,b_1] |_S$.
\end{proof}

\begin{proof}[Proof of Lemma \ref{seed_subgrp}.]
Let $s < t$ be in $J$ and set $U:=(s,t)$.
Define $H_0:= \{g \in G \mid \supt (g) \cap J \subseteq U \}''$.
Since $J$ is a resolvable orbital of $G$, $H_0 |_J$ is perfect by Lemma \ref{lem:per}.
It is easily checked that $U$ is a resolvable orbital of $H_0$ and hence Lemma \ref{lem:persubgrp} implies
that $H_0|_J$ is the normal closure of a single element.
By Lemma \ref{lem:sup}, there is an $H \leq H_0$ such that $H$ is perfect, has finitely many orbitals,
and $H |_J = H_0|_J$.

If the closure of $\supt H \cap K$ is contained in $K$, then $H$ satisfies the conclusion of the lemma.
If the closure of $\supt H \cap K$ contains an endpoint of $K$, then let $g \in G$ be such that
$Ug \cap U = \emptyset$.
Let $N$ be the normal closure of $H^g$ in $\gen{H \cup H^g}$ and let $S$ be the union of the orbitals
of $\gen{H \cup H^g}$ which are not also orbitals of $N$.
Observe that $U \subseteq S$ and that the closure of $S \cap K$ does not contain the endpoints of $K$.
Applying Lemma \ref{projection_lem} to $A = H$ and $B= H^g$, the projection $H |_S$
is contained in $\gen{H \cup H^g} \leq G$ and satisfies the conclusion of the lemma.
\end{proof}

\begin{proof}[Proof of Theorem \ref{semiconj_dichot}]
The theorem is proved by induction on $n$
with the bulk of the proof dedicated to the base case $n=1$.
Suppose that $n=1$ and write $K$ for $K_0$.
Applying Lemma \ref{seed_subgrp}, fix a perfect subgroup $H \leq G$ with finitely many orbitals
such that:
\begin{itemize}

\item $U:=\supt H \cap J$ is a nonempty interval with closure contained in $J$;

\item the closure of $\supt H \cap K$ is contained in $K$;

\item $U$ is a resolvable orbital of $H$;

\item the number of orbitals of $H$ is minimized.

\end{itemize}
If $\supt H \cap K = \emptyset$, then the first alternative of the theorem holds.
Suppose now that this is not the case.
Define $V$ to be the leftmost component of $\supt H \cap K$.
By lemma \ref{semiconj_crit},
it suffices to show that for all $g \in G$, $Ug \cap U \ne \emptyset$ if and only if $Vg \cap V \ne \emptyset$.

\begin{claim}
For all $g \in G$, $Ug \cap U \ne \emptyset$ implies $Vg \cap V \ne \emptyset$
\end{claim}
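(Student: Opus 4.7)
The plan is to argue by contradiction using the minimality of the orbital count in our choice of $H$. Assume $Ug \cap U \ne \emptyset$ but $Vg \cap V = \emptyset$; we will construct a perfect subgroup $H^\ast \le G$ satisfying every defining property used to select $H$ but with strictly fewer orbitals, contradicting that minimality clause.

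Since both $H$ and $H^g$ are perfect subgroups of $\gen{H \cup H^g} \le G$, Lemma \ref{projection_lem} applies with $A = H$ and $B = H^g$. Let $N$ denote the normal closure of $H^g$ in $\gen{H \cup H^g}$ and let $S$ denote the union of those orbitals of $\gen{H \cup H^g}$ which are not orbitals of $N$. The lemma then yields that $H^\ast := H|_S$ lies in $\gen{H \cup H^g} \le G$. Being a quotient of $H$, $H^\ast$ is perfect and has finitely many orbitals, and the closure of its support in $K$ is contained in the closure of $\supt H \cap K$, hence in $K$.

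It then remains to verify: (i)~$U \subseteq S$, so that $U$ remains the unique, still resolvable, orbital of $H^\ast$ in $J$; and (ii)~at least one orbital of $H$ in $K$ is not contained in $S$, so that $H^\ast$ has strictly fewer orbitals than $H$. For (i), the assumption $Ug \cap U \ne \emptyset$ together with the resolvability of $U$ for $H$ implies that the $\gen{H \cup H^g}$-orbital containing $U$ strictly extends $Ug$; one then checks that this orbital cannot be an orbital of $N$, because the translates of $Ug$ under $\gen{H \cup H^g}$---which generate the orbitals of $N$ within this super-orbital---do not cover it. For (ii), the hypothesis $Vg \cap V = \emptyset$ combined with the leftmost-ness of $V$ among the components of $\supt H \cap K$ should force the existence of some orbital $W$ of $H$ in $K$ whose $\gen{H \cup H^g}$-orbital contains an orbital of $H^g$, making the whole super-orbital an orbital of $N$ and hence excluded from $S$.

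The main obstacle is (ii). Extracting such a $W$ requires a careful case analysis on the relative ordering of the components of $\supt H \cap K$ and their $g$-translates---in particular, distinguishing between $V$ lying to the left of $Vg$ versus to the right. In the most delicate subcase, where $V$ is the only orbital of $H$ in $K$ and $Vg$ is disjoint from $\supt H$, it may be necessary either to apply Lemma \ref{projection_lem} with the roles of $H$ and $H^g$ reversed (using the normal closure of $H$ and projecting $H^g$ instead), or to exploit the running assumption that alternative (\ref{kernel}) of the theorem fails, so that no element of $G$ has support meeting $J$ while disjoint from $K$, in order to rule out the remaining configurations.
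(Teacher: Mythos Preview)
Your proof has the containments exactly backwards, and this is not a cosmetic issue but a fatal one.

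Under the hypothesis $Ug \cap U \ne \emptyset$, the orbital $L$ of $\gen{H \cup H^g}$ containing $U$ is $U \cup Ug$, and this \emph{is} an orbital of $N$, so $U$ is disjoint from $S$, not contained in it. To see this, note that $Ug \cap U$ is an initial or final segment of $U$; since $U$ is an orbital of $H$, the $H$-orbit of any $x \in U$ meets $Ug \cap U$, so the $H$-translates of $Ug$ already cover $L$. As $N$ contains every $H$-conjugate of $H^g$, this gives $L \subseteq \supt N$, and connectedness forces $L$ to be a single $N$-orbital. Your claim in (i) that ``the translates of $Ug$ under $\gen{H \cup H^g}$ \ldots\ do not cover it'' is therefore false, and swapping the roles of $H$ and $H^g$ does not help: the argument is symmetric, so $L$ is equally an orbital of the normal closure of $H$.

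Consequently $H^\ast = H|_S$ has \emph{no} support in $J$ at all, so it cannot serve as a competitor to $H$ in the minimality argument. The paper's proof takes the complementary projection: since $H|_S \le G$ by Lemma~\ref{projection_lem}, one gets $H_0 := H|_{\supt H \setminus S} \le G$ as well (each $h|_{\supt H \setminus S}$ equals $h \cdot (h|_S)^{-1}$), and it is $H_0$ that retains $U$ while losing an orbital in $K$. The paper secures the latter by first replacing $g$ with $g^{-1}$ if necessary so that $V$ is disjoint from $\supt H^g$---this is exactly where the leftmost-ness of $V$ is used, and it makes $V$ itself an orbital of $\gen{H \cup H^g}$ lying in $S$. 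Your case analysis in (ii) is groping toward this reduction, but without correcting (i) the overall scheme cannot be salvaged.
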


\begin{proof}
Suppose first for contradiction that for some $g \in G$,
$Ug \cap U \ne \emptyset$ but $Vg \cap V = \emptyset$.
By replacing $g$ with $g^{-1}$ if necessary, we may assume that $V$ is disjoint from the support of $H^g$.
Let $N$ denote the normal closure of $H^g$ in $\gen{H \cup H^g}$ and let
$S$ be the union of all orbitals of $\gen{H \cup H^g}$ which are not orbitals of $N$.
Observe that since $U$ is a resolvable orbital of $H$ and $U \cap Ug \ne \emptyset$, it follows
that $U$ is disjoint from $S$.
On the other hand, $V$ is contained in $S$.
Thus applying Lemma \ref{projection_lem} to $A = H$ and $B = H^g$ yields
that $H|_S$ is a perfect subgroup of $G$.
Consequently if $R = \supt H \setminus S$, then $H_0:=H|_R$ is also contained in $G$.
Since $H_0$ is also perfect, satisfies $H_0|_U = H|_U$ and has fewer orbitals than $H$,
we have contradicted our choice of $H$ to minimize the number of its orbitals.
\end{proof}

\begin{claim}
For all $g \in G$, $Ug \cap U = \emptyset$ implies $Vg \cap V = \emptyset$
\end{claim}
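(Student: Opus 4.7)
The plan is to mirror the contradiction argument of the preceding claim: suppose some $g\in G$ satisfies $Ug\cap U=\emptyset$ but $Vg\cap V\neq\emptyset$, and use Lemma \ref{projection_lem} to produce a perfect subgroup of $G$ with the bulleted properties of $H$ but strictly fewer orbitals, contradicting the minimality of $H$.

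Let $N$ be the normal closure of $H^g$ in $\gen{H\cup H^g}$, and let $S$ be the union of those orbitals of $\gen{H\cup H^g}$ that are not orbitals of $N$. Lemma \ref{projection_lem} applied with $A=H$ and $B=H^g$ will then yield $H|_S\leq G$; since $h\mapsto h|_S$ is a surjective homomorphism, $H|_S$ inherits perfectness, and the containment $\overline{\supt}(H|_S)\cap K\subseteq K$ is automatic from the analogous property of $H$. The task therefore reduces to establishing (i) $U\subseteq S$, so that $U$ remains a resolvable orbital of $H|_S$, and (ii) $V\cap S=\emptyset$, so that $V$ is no longer an orbital and $H|_S$ has strictly fewer orbitals than $H$.

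Statement (ii) is the easier direction: the assumption $Vg\cap V\neq\emptyset$ forces $V$ and $Vg$ to lie in a common orbital $W$ of $\gen{H\cup H^g}$. Since $Vg\subseteq \supt H^g\subseteq \supt N$ and $\supt N$ is $\gen{H\cup H^g}$-invariant by the normality of $N$, one gets $W\subseteq \supt N$, so $W$ is itself an orbital of $N$ and hence $V\cap S=\emptyset$.

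Statement (i) is the main obstacle. The key observation is that on $J$ the supports of $H$ and $H^g$ are $U$ and $Ug$, which are disjoint by hypothesis; since $H$ and $H^g$ both lie in $G$, both preserve $J$, and each acts trivially on the support of the other. Consequently $\gen{H\cup H^g}$ preserves both $U$ and $Ug$ as sets, so $Ug\cdot w=Ug$ for every $w\in\gen{H\cup H^g}$, and therefore $\supt(H^g)^w\cap J=\supt(H^g)\cdot w\cap J=Ug$ is disjoint from $U$. It follows that $U\cap\supt N=\emptyset$, while $U$ itself is an orbital of $\gen{H\cup H^g}$ (being $H$-invariant and fixed pointwise by $H^g$); hence $U\subseteq S$. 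Combining (i) and (ii) with Lemma \ref{projection_lem} produces a perfect subgroup $H|_S\leq G$ satisfying every bulleted property of $H$ but with one fewer orbital, which is the desired contradiction.
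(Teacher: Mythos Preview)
Your overall strategy mirrors the paper's exactly, and your argument for (i) is correct and supplies the details the paper only asserts: since $\supt H\cap J=U$ and $\supt H^g\cap J=Ug$ are disjoint, each of $H,H^g$ fixes the other's support in $J$ pointwise, so $U$ is a full orbital of $\gen{H\cup H^g}$ disjoint from $\supt N\cap J=Ug$.

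The argument for (ii), however, has a genuine gap. From ``$Vg\subseteq\supt N$ and $\supt N$ is $\gen{H\cup H^g}$-invariant'' you \emph{cannot} deduce $W\subseteq\supt N$. A $G$-invariant open subset of an orbital $W$ of $G$ need not be all of $W$; invariance only gives that the $G$-orbit of each point of $Vg$ lies in $\supt N$, and while such an orbit accumulates at both endpoints of $W$, it need not be dense, so $\supt N\cap W$ could be a proper open invariant subset. (For a toy illustration of the phenomenon: if $G=\langle b\rangle$ with $b$ a single bump on $(0,1)$, then $(0,1)\setminus\{(\tfrac12)b^n:n\in\Z\}$ is open, $G$-invariant, and proper.)

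This is exactly why the paper first normalizes by replacing $g$ with $g^{-1}$ if necessary so that $\inf V\le\inf Vg$; you omit this step, and without it (or an equivalent device) the argument cannot be completed. With the normalization one has $\inf W=\inf V$, and then one argues orbital-by-orbital: for each orbital $V_i\neq V$ of $H$ contained in $W$, the left endpoint $\inf V_i$ lies in $W\setminus\supt H\subseteq\supt H^g\subseteq\supt N$, so $\supt N\cap V_i$ is an $H$-invariant open subset of $V_i$ containing a right-neighbourhood of $\inf V_i$; since the $H$-orbit of any point of $V_i$ accumulates at $\inf V_i$, this forces $V_i\subseteq\supt N$. A symmetric argument at $\sup V$ (using that $\sup V\in W$ forces $\sup V\in\supt H^g$) handles $V$ itself. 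The orbitals of $H^g$ in $W$ are already in $\supt N$, so $W\subseteq\supt N$ and $W$ is an orbital of $N$.
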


\begin{proof}
Suppose first for contradiction that for some $g \in G$,
$Ug \cap U = \emptyset$ but $Vg \cap V \ne \emptyset$.
By replacing $g$ with $g^{-1}$ if necessary, we may assume that $\inf V \leq \inf Vg$. 
As in the previous claim, let $N$ denote the normal closure of $H^g$ in $\gen{H \cup H^g}$ and let
$S$ be the union of all orbitals of $\gen{H \cup H^g}$ which are not orbitals of $N$.
This time $U$ is contained in $S$ and $V$ is disjoint from $S$.
Lemma \ref{projection_lem} implies $H_0:=H|_S$ is a perfect subgroup of $G$.
Since $H_0$ has fewer orbitals than $H$, is perfect, and satisfies $H_0|_J = H|_J$,
we again contradict our choice of $H$.
\end{proof}

Now suppose that that $n$ is given and that the statement
of the theorem is true for $n$.
Let $K_i$ $(i < n+1)$ be orbitals of $G$.
We need to show that if (\ref{semiconj}) fails, then
there is a $g \in G$ whose support intersects $J$
but not $K_i$ for any $i < n+1$.
By our inductive assumption, there is a $g_0 \in G$
whose support intersects $J$ but is disjoint from
$K_i$ for $i < n$.
Additionally here is a $g_1 \in G$ whose support
intersects $J$ but is disjoint from $K_n$.
Since $J$ is a resolvable orbital of $G$, there is an $h$ such that
$g_0^h$ does not commute with $g_1$.
It follows that $g = [g_0^h,g_1]$ is as desired. 
\end{proof}

\section{Consequences of the dichotomy theorem}

\label{main:sec}

In this section we will give proofs of Theorems \ref{main_result} and \ref{top_conj} using
Theorem \ref{semiconj_dichot}.
We will also illustrate the utility of Theorem \ref{semiconj_dichot} by deriving some known results
as corollaries.

\begin{proof}[Proof of Theorem \ref{main_result}.]
Suppose that $(f,g)$ is an $F$-obstruction and $\phi :\gen{f,g} \to \PLoI$ is an embedding.
By rescaling and translating if necessary, we may assume that the supports of $\gen{f,g}$ and
$\gen{\phi(f),\phi(g)}$ are disjoint.
Let $J$ be an orbital of $\gen{f,g}$ such that for some $s \in J$, the rotation number of $f$
 modulo $g$ at $s$ is irrational. 
Let $K_i$ $(i < n)$ list the orbitals of $\gen{\phi(f),\phi(g)}$.
Observe that since $\phi$ is an injection,
the first alternative of Theorem \ref{semiconj_dichot} applied to $G :=\gen{f\phi(f),g\phi(g)}$
can not hold.
Therefore there is an $i < n$ and a continuous $G$-equivariant monotone
surjection $\psi: K_i \to J$; let $t \in K_i$ be such that $\psi(t) = s$.
Since rotation numbers are preserved by semiconjugacy, 
it follows that the rotation number of $\phi(f)$ modulo $\phi(g)$ at $t$ is irrational
and hence that $(\phi(f),\phi(g))$ is an $F$-obstruction.
\end{proof}

We will now recall the statement and context of Rubin's Reconstruction Theorem.
Suppose that $X$ is a locally compact Hausdorff space and $G$ is a group of homeomorphisms of $X$.
The group $G$'s action on $X$ is \emph{locally dense} if $X$ has no isolated points and
whenever $x \in U \subseteq X$
with $U$ open,
$$\{xg \mid (g \in G) \mand (\supt(g) \subseteq U)\}
$$ is somewhere dense.
It is easily checked that if $X \subseteq I$ is an interval, then this is equivalent to $X$ being a resolvable orbital of $G$.
Rubin's Theorem asserts that $X$ and $Y$ are locally compact and if $G \leq \Homeo X$ and $H \leq \Homeo Y$ are such that the actions of $G$ and
$H$ on their underlying spaces are locally dense, then any isomorphism between $G$ and $H$ is induced by
a unique homeomorphism of $X$ and $Y$ (this is Corollary 3.5(c) of \cite{RubinTheorem}).

We will now show how to derive Rubin's theorem when $G$ and $H$ are subgroups of $\PLoI$.
Notice that Theorem \ref{top_conj} is an immediate consequence of this result and Proposition \ref{dense_bumps}.

\begin{cor} \cite{RubinTheorem} \label{Rubin_cor}
Suppose that $G,H\leq \PLoI$ are nontrivial and each acts on its support in a locally dense manner.
If $\phi: G \to H$ is an isomorphism, then there is a unique homeomorphism
$\psi:\supt G \to \supt H$ such that
for all $x \in \supt(G)$, $\psi(x g) = \psi(x)\phi(g)$.
\end{cor}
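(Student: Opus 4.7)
The plan is to apply the dichotomy theorem (Theorem~\ref{semiconj_dichot}) to a diagonal subgroup built from $\phi$, working orbital by orbital, and then to symmetrize using $\phi^{-1}$ to upgrade the resulting equivariant surjections to homeomorphisms. To set up, I would first rescale and translate $H$ so that $\supt G$ and $\supt H$ are disjoint subsets of $I$, and form the diagonal subgroup $\tilde G := \{g\cdot\phi(g) \mid g \in G\} \leq \PLoI$. The map $g \mapsto g\phi(g)$ is a group isomorphism $G \to \tilde G$. The orbitals of $\tilde G$ are exactly the orbitals of $G$ (inside $\supt G$) together with the orbitals of $H$ (inside $\supt H$), and local density ensures each is a resolvable orbital of $\tilde G$.

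Next, for each orbital $J$ of $G$ I would produce an orbital $K(J)$ of $H$ and a continuous monotone surjection $\psi_J : K(J) \to J$ satisfying $\psi_J(x\phi(g)) = \psi_J(x) g$ for all $x \in K(J)$ and $g \in G$. The key step is to invoke Theorem~\ref{semiconj_dichot} on $\tilde G$ with resolvable orbital $J$ and a finite list $K_0, \ldots, K_{n-1}$ of orbitals of $H$ chosen as follows: select finitely many $g_0, \ldots, g_{m-1} \in G$ supported in $J$ such that $\gen{g_0, \ldots, g_{m-1}}$ acts locally densely on $J$, and let $K_0, \ldots, K_{n-1}$ be the (necessarily finitely many) orbitals of $H$ meeting $\bigcup_i \supt\phi(g_i)$. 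Alternative~(\ref{kernel}) of the dichotomy would yield $g \in G$ with $\supt g \cap J \ne \emptyset$ and $\supt\phi(g)$ disjoint from every $K_i$; this forces $\phi(g)$ to commute with each $\phi(g_i)$, hence $g$ to commute with each $g_i$ by injectivity of $\phi$. Since $J$ is an orbital of $G$, restricting to $J$ preserves composition, so $g |_J$ lies in the centralizer of the locally dense subgroup $\gen{g_0 |_J, \ldots, g_{m-1} |_J}$ in $\Homeo(J)$, which is trivial --- contradicting $\supt g \cap J \ne \emptyset$. Alternative~(\ref{semiconj}) therefore holds and delivers the desired $\psi_J$.

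Third, applying the identical construction to $\phi^{-1}$ produces, for each orbital $K$ of $H$, an orbital $J(K)$ of $G$ and a monotone equivariant surjection $\xi_K : J(K) \to K$. I would then verify $J(K(J)) = J$ and observe that $\xi_{K(J)} \circ \psi_J : K(J) \to K(J)$ is a continuous monotone $H$-equivariant self-map of $K(J)$; such a map commutes with every element of $H$, so by local density of $H$ on $K(J)$ it lies in the trivial centralizer of a locally dense subgroup of $\Homeo(K(J))$ and must be the identity. This forces $\psi_J$ to be a homeomorphism. Patching the inverses $\psi_J^{-1}$ over all orbitals $J$ of $G$ yields the desired $\psi : \supt G \to \supt H$, and uniqueness follows because any competing equivariant homeomorphism is determined by its value on a single point of each $G$-orbit, and $G$-orbits are dense in each orbital of $G$ by local density.

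The main obstacle I anticipate is producing the finite witness set $g_0, \ldots, g_{m-1} \in G^J$ whose generated subgroup acts locally densely on $J$: for abstract locally dense actions this is not automatic, and even a countable locally dense witness family may not obviously reduce to a finite one. For subgroups of $\PLoI$, I would establish it by combining Proposition~\ref{fastF} (which embeds $F$ into the subgroup generated by two suitably overlapping positive bumps) with a finite cover of $J$ by the supports of such embedded copies of $F$, exploiting that elements of $\PLoI$ have only finitely many breakpoints to ensure that finitely many bumps in $G^J$ suffice to produce local density everywhere in $J$, including in neighbourhoods of the endpoints of $J$.
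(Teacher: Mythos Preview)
Your overall architecture --- form the diagonal subgroup $\tilde G$, invoke Theorem~\ref{semiconj_dichot} orbital by orbital, then symmetrize via $\phi^{-1}$ to upgrade the equivariant surjections to homeomorphisms --- is exactly the paper's. The difference lies in how alternative~(\ref{kernel}) is excluded, and here the paper sidesteps the obstacle you anticipate rather than confronting it. Instead of seeking finitely many $g_i$ supported in $J$ generating a locally dense action, the paper takes a \emph{single} nontrivial $g_0 \in G$ with $\overline{\supt}(g_0) \subseteq J$ and lets $K_0,\ldots,K_{n-1}$ be the orbitals of $H$ meeting $\supt \phi(g_0)$. The point is that for \emph{every} $h \in G$ one has $\supt \phi(g_0^h) = \supt\bigl(\phi(g_0)^{\phi(h)}\bigr) \subseteq \bigcup_i K_i$, since the $K_i$ are orbitals of $H$ and hence $H$-invariant. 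Thus if alternative~(\ref{kernel}) produced $g$ with $\supt\phi(g)$ disjoint from each $K_i$, then $g$ would commute with every $G$-conjugate of $g_0$; resolvability of $J$ then forces $g|_J = \id$ (such a $g$ preserves each $G$-translate of $\supt(g_0)\cap J$, hence fixes the dense $G$-orbit of any boundary point of $\supt(g_0)\cap J$). One element together with closure under conjugation already has trivial centralizer in $G|_J$ while keeping the list of relevant $H$-orbitals finite --- no finitely generated locally dense subgroup is needed.

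Your proposed workaround via Proposition~\ref{fastF} and a ``finite cover of $J$'' does not work as stated: $J$ is open and cannot be covered by finitely many open subintervals compactly contained in it, and the finite-breakpoint property of elements of $\PLoI$ does not by itself supply bumps \emph{in $G$} whose supports reach the endpoints of $J$. The conjugation trick above is the missing idea.
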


\begin{rem}
Both McCleary \cite{McCleary} and Bieri-Strebel \cite{BieriStrebel} had previously
proved similar reconstruction theorems for subgroups of $\PLoI$,
although under different dynamical hypotheses. 
\end{rem}

\begin{proof}
First observe that since the action of $G \leq \PLoI$ on its support is locally dense,
then the only $G$-equivariant maps between orbitals of $G$ are the identity functions.
This in particular implies that $\psi$ is unique if it exists.
To prove existence, suppose $G,H \leq \PLoI$  and $\phi:G \to H$ are as in the statement of the corollary.
By replacing $G$ and $H$ by rescaled translates if necessary, we may assume that 
the supports of $G$ and $H$ are disjoint.

Define $\Gamma := \{g \phi(g) \mid g \in G\}$ and let $J$ be an orbital of $G$.
Observe that we are finished once we have shown that there is a $\Gamma$-equivariant homeomorphism
between $X$ and $Y$.
Furthermore it suffices to show that for each orbital $J$ of $G$, there a unique orbital $K$ of $H$ 
for which there is a $\Gamma$-equivarient homeomorphism between $J$ and $K$.
The statement with the roles of $G$ and $H$ reverse must also hold and $\psi$ is then obtained by
pasting together these local homeomorphisms.

Fix a $g_0 \in G$ such that $\supt(g_0)$ is nonempty with closure contained in $J$.
Let $K_i$ $(i < n)$ list the orbitals of $H$ which intersect $\supt(\phi(g_0))$ --- there are only finitely many such
orbitals since $\phi(g_0) \in \PLoI$.
Since $J$ is a resolvable orbital of $G$,
the only element of $G |_J$ which commutes with every conjugate of $g_0$ is the identity.
Apply Theorem \ref{semiconj_dichot} to the group $\Gamma$ and observe that
the first alternative cannot hold since if $g_J$ is not the identity, $g$ fails to commute with $g_0^h$ for some
$h \in G$.
Since the support of $\phi(g_0^h)$ is contained in $\bigcup_{i < n} K_i$, it must be that
$\phi(g) |_{K_i}$ is nontrivial for some $i < n$.
Thus there is an $i < n$ and a $\Gamma$-equivariant continuous surjection
$\theta :K_i \to J$.
Similarly, there is a $\Gamma$-equivariant continuous surjection $\vartheta$ from some orbital $J'$ of 
$G$ to $K_i$
By the observation made at the start of the proof, $J' = J$ and
$\vartheta = \theta^{-1}$.
In particular $\vartheta :J \to K$ is the desired $\Gamma$-equivariant homeomorphism.
\end{proof}

\begin{rem}
It should be noted that Theorem \ref{semiconj_dichot} is false if we replace $\PLoI$ with
$\HomeoI$.
For example, since $F$ is orderable \cite{simple_orderable},
there is a $G \leq \HomeoI$ which is isomorphic to $F$ such that
every nonidentity element of $G$ has only isolated fixed points;
such a $G$ can not be semiconjugate to the standard copy of $F$.
It would be interesting to know if there are broader contexts in which Theorem \ref{semiconj_dichot} holds.
\end{rem}

Next we will derive Brin's Ubiquity Theorem from Theorem \ref{semiconj_dichot}.

\begin{cor} \cite{ubiquity} \label{ubiquity_cor}
Suppose $G \leq \PLoI$ and $K$ is an orbital of $G$ such that some element of $G$
approaches one end of the $K$ but not the other.
Then $F$ embeds into $G$.
\end{cor}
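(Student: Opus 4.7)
The plan is to apply Proposition~\ref{fastF} to produce a copy of $F$ inside $G$. This requires exhibiting two bumps $a_0, a_1 \in G$ with supports $(s_0, t_0)$ and $(s_1, t_1)$ satisfying $s_0 < s_1 < t_0 < t_1$ and $t_0 a_1 \leq s_1 a_0$.

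After possibly reversing orientation, I would assume $g \in G$ approaches the left endpoint $p$ of $K=(p,q)$ but not the right endpoint $q$. Since $g\in\PLoI$ has finitely many components of support and $p\in\overline{\supt(g)}$, the leftmost component of $\supt(g)\cap K$ must be an interval $(p,c)$ on which $g$ acts as a bump; after replacing $g$ with $g^{-1}$ if necessary I take this to be a positive bump. I then choose $u\in K$ with $\overline{\supt(g)\cap K}\subseteq(p,u)$. Since $K$ is an orbital of $G$, the point $u$ itself lies in $\supt(G)$, so there is $f\in G$ whose support contains $u$ in a bumping component $(a,b)\subseteq K$ with $a<u<b$; this $f$ will serve as a bridging element that transports supports across $u$.

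With this setup the argument proceeds by applying Theorem~\ref{semiconj_dichot} to a perfect subgroup $H\leq G$ (obtained via the ideas of Lemma~\ref{seed_subgrp}) having a resolvable orbital $J$ contained in $(p,c)\cap(a,b)$. Taking $J$ together with the remaining orbitals of $H$ inside $K$ as the $K_i$, the dichotomy either yields (i) an element of $H\leq G$ whose support sits entirely inside $J$, which after commutator manipulation becomes a bump $a_0\in G$ with $\supt(a_0)=(s_0,t_0)\subset J$; or (ii) a $G$-equivariant monotone surjection from another orbital of $H$ onto $J$, which by transport also delivers such a bump. Setting $a_1:=a_0^{f^n}$ for $n$ chosen so that $s_0 f^n$ lies in $(s_0,t_0)$ while $t_0 f^n>t_0$ gives a second bump with support $(s_1,t_1)=(s_0,t_0)f^n$ satisfying $s_0<s_1<t_0<t_1$; a further adjustment ensures $t_0 a_1\leq s_1 a_0$, and Proposition~\ref{fastF} then embeds $F$ into $\gen{a_0,a_1}$.

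The hard part will be bootstrapping to the resolvability that Lemma~\ref{seed_subgrp} and Theorem~\ref{semiconj_dichot} require as input, since a priori $K$ need not be a resolvable orbital of $G$. This is where the one-sided approach hypothesis is essential: commutators of the form $[g, g^{f^k}]$, for $k$ large enough that $f^k$ displaces the non-leftmost components of $\supt(g)\cap K$ entirely past one another, will cancel away from $p$ and so furnish elements of $G$ with support confined to small intervals abutting $p$. Populating $(p,c)$ with such elements should yield the resolvability needed to launch the dichotomy machinery, after which the rest of the construction is mechanical.
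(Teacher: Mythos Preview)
Your plan founders precisely where you anticipate: the bootstrapping to resolvability. First, a small but telling error: the commutators $[g,g^{f^k}]$ are the identity on a neighborhood of $p$, not supported on intervals abutting $p$. Since every element of $G$ fixes $p$ and is linear nearby, $g$ and $g^{f^k}=f^{-k}gf^k$ have the same right-derivative at $p$ and hence agree on a right-neighborhood of $p$; so if $f$ happens to be the identity near $p$ the commutator is trivial on all of $(p,c)$, and in any case it never abuts $p$. More seriously, even a supply of elements of $G$ with small support does not produce a \emph{base} for the topology on any interval, which is what resolvability demands. Manufacturing a resolvable orbital inside $G$ directly from the one-sided hypothesis is essentially the content of Brin's original argument; Theorem~\ref{semiconj_dichot} cannot help you here because it takes resolvability as input, not output. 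Your handling of the two alternatives is also too quick: alternative~(\ref{kernel}) only gives an element whose support meets $J$ and avoids the listed $K_i$, not one supported entirely in $J$, and alternative~(\ref{semiconj}) does not by itself deliver a bump.

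The paper sidesteps all of this by importing resolvability from outside $G$. After translating so that $\supt G\subseteq(1/2,1)$, it fixes a standard copy of $F=\gen{a,b}$ supported on $J:=(0,1/2)$, uses the one-sided hypothesis together with Lemma~\ref{lem:mov} to find $f,g\in G$ whose restrictions to $K$ already satisfy the defining relations of $F$, and then applies Theorem~\ref{semiconj_dichot} not to $G$ but to the diagonal group $\Gamma:=\gen{af,bg}$, for which $J$ is automatically resolvable since $\Gamma|_J\cong F$. The equivariant surjections $\psi_i:K_i\to J$ the dichotomy produces let one control supports on the $G$-side well enough to verify Proposition~\ref{fastF} there; since every element of $\Gamma$ restricted to $(1/2,1)$ lies in $\gen{f,g}\leq G$, this yields the embedding of $F$ into $G$. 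The key idea you are missing is that the resolvable orbital need not live in $G$ at all.
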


\begin{proof}
By replacing $G$ with a rescaled translate if necessary, we may assume that the support of $G$
is contained in $(1/2,1)$.
Let $a$ and $b$ be the generators for the rescaled standard model of $F$ 
with support $(0,1/2)$.
Let $K_i$ $(i < n)$ list the orbitals of $G$ so that $K_0 = K$.
The hypothesis combined with Lemma \ref{lem:mov} readily yields
a pair $f,g \in G$ such that $f|_J$ and $g|_J$ 
satisfy the same relations as $a$ and $b$.

Define $\Gamma := \gen{af, bg}$ and apply Theorem \ref{semiconj_dichot}
to the group $\Gamma$, the distinguished orbital $J:=(0,1/2)$, and the orbitals
$K_i$ $(i < n)$.
There is a subset $X \subseteq \{0,\ldots,n-1\}$ and $\Gamma$-equivarient continuous monotone surjections 
$\psi_i:K_i \to J$ for $i \in X$ and an $h \in \Gamma$ such that if $i  < n$ is not in $X$,
then $h |_{K_i}$ is the identity;
let $\psi: \bigcup_{i \in X} K_i \to J$ be the common extension of the $\psi_i$'s.
Using that $J$ is a resolvable orbital of $\Gamma$ and arguing as in the proof of Proposition \ref{dense_bumps}, there is an $h_0$ in the normal closure of $h$ in $\Gamma$
such that $h_0 |_J$ is a positive bump. 
Observe that the image of the support of $h_0$ under $\psi$ is the union of $(s_0,t_0):=\supt(h_0) \cap J$ and a finite set $E$.
Let $g$ be such that $E g \cap E = \emptyset$ and $s_0 < s_0 g < t_0 < t_0 g$.
It is now easily checked that for some $m > 0$, $a:=h_0^m$ and $b:= (h_0^g)^{-m}$ are as in Proposition \ref{fastF}.
\end{proof}

\begin{rem}
While we used Brin's Ubiquity Theorem to prove Theorem \ref{F_embed}, it is not
required for the proof of Theorem \ref{semiconj_dichot}.
Even so, the purpose of deriving Corollaries \ref{Rubin_cor} and \ref{ubiquity_cor} from Theorem \ref{semiconj_dichot} is
not to give new proofs of these facts but rather to demonstrate the ways in which Theorem \ref{semiconj_dichot}
can be used and the utility that resides in it.
\end{rem}

\section{Some examples}

\label{examples:sec}

In this section, we will prove Corollaries \ref{Ftau}--\ref{trans_F}.
Recall that Cleary's group $F_\tau$ is the subgroup of $\PLoI$ consisting of those
elements whose singularities are in $\Z[\tau]$ and whose slopes are powers of $\tau$,
where $\tau$ is the solution to $\tau^2 = \tau + 1$ with $\tau > 1$.
If $1 < p < q$ are relatively prime integers, Stein's group $F_{p,q}$ is the subgroup of $\PLoI$ consisting
of those elements whose singularities are in $\Z[\frac{1}{p},\frac{1}{q}]$ and whose slopes are the product of a power of $p$ and
a power of $q$.

The following observations will allow us to show that Cleary's and Stein's groups contain
$F$-obstructions.

\begin{obs} \label{trans_obs}
Suppose that $f,g \in \HomeoI$ and for some $s$ and $0 < \xi < \eta$,
$xf = x+\xi$ and $xg = x+\eta$ whenever $s \leq x \leq s+\eta$.
Then the rotation number of $f$ modulo $g$ at $s$ is defined and equals $\xi/\eta$.
\end{obs}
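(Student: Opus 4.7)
The plan is to unfold the definition of the rotation number of $f$ modulo $g$ at $s$ and observe that the associated circle map on $C := [s, sg)$ is literally a rotation, not merely semiconjugate to one.

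First, I would verify the defining inequalities $s < sf \leq sg < sfg = sgf$. Under the hypothesis we have $sf = s + \xi$ and $sg = s + \eta$, and both of these points lie in $[s, s+\eta]$, where the translation formulas continue to apply. Hence $sfg = (s+\xi) + \eta = s + \xi + \eta$ and $sgf = (s+\eta) + \xi = s + \xi + \eta$, and the required inequalities follow from $0 < \xi < \eta$.

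Second, I would compute the circle map $\gamma : C \to C$ explicitly. For every $x \in C$ one has $xf = x + \xi$. If $x < s + \eta - \xi$, then $xf < sg$, so $m = 0$ is the correct choice in the definition and $x\gamma = x + \xi$. Otherwise $xf \in [s+\eta, s+\eta+\xi)$; the crucial point is that $\xi < \eta$ (rather than only $\xi \leq \eta$) places this interval inside $[s+\eta, s+2\eta] = [s,s+\eta]g$, on which $g^{-1}$ is translation by $-\eta$. Therefore $m = 1$ works and $x\gamma = x + \xi - \eta$.

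These formulas identify $\gamma$ with translation by $\xi$ on the circle $C \cong \R/\eta\Z$. Rescaling via the homeomorphism $x \mapsto (x-s)/\eta$ conjugates $\gamma$ to rotation of $\R/\Z$ by $\xi/\eta$, whose rotation number is $\xi/\eta$; since $0 < \xi/\eta < 1$, no reduction modulo $1$ is required, and we conclude that the rotation number of $f$ modulo $g$ at $s$ equals $\xi/\eta$. I do not expect any genuine obstacle; the argument is essentially bookkeeping, and the only care required is in checking that $g^{-1}$ behaves as a pure translation on the relevant subinterval, which is precisely where the strict inequality $\xi < \eta$ is used.
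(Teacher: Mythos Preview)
Your argument is correct and is precisely the direct verification one would expect; the paper itself treats this as an observation and offers no proof beyond the statement, so there is nothing to compare against. One tiny quibble: the inclusion $[s+\eta,\,s+\eta+\xi)\subseteq [s+\eta,\,s+2\eta]$ already follows from $\xi\leq\eta$, so the strictness of $\xi<\eta$ is not what is doing the work at that step (it is needed earlier, to get $sf<sg$ and a nonzero rotation number), but this does not affect the validity of your proof.
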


\begin{obs} \label{ratio_obs}
Suppose that $f,g \in \HomeoI$ and for some $s_0 < s_1$ and  $1 < a < b$,
$xf = a(x-s_0) + s_0$ and $xg = b(x-s_0) + s_0$ whenever
$s_0 \leq x \leq s_1$.
If $s \in (s_0,s_1)$ is such that $sg \leq s_1$, then the rotation number of $f$
modulo $g$ at $s$ is defined and equals $\log_b (a)$
\end{obs}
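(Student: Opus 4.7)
The plan is to reduce the statement to Observation \ref{trans_obs} by a logarithmic change of coordinates that converts the multiplicative dynamics of $f$ and $g$ near $s_0$ into an additive one. Define $\phi : (s_0, 1] \to \R$ by $\phi(x) := \log_b(x - s_0)$, which is an increasing homeomorphism onto its image. On the linearity interval $[s_0, s_1]$ one has
\[
\phi(xf) = \log_b\bigl(a(x - s_0)\bigr) = \phi(x) + \log_b a, \qquad \phi(xg) = \phi(x) + 1,
\]
so in $\phi$-coordinates $f$ and $g$ act as translations by $\xi := \log_b a$ and $\eta := 1$ respectively; since $1 < a < b$ we have $0 < \xi < \eta < 1$, which (noting that the rotation number answer lies in $[0,1)$) is the regime in which the claimed value $\log_b a$ is already reduced modulo $1$.

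Next I would verify the preconditions for the rotation number of $f$ modulo $g$ at $s$ to be defined, i.e.\ $s < sf \leq sg < sfg = sgf$. The inequalities $s < sf \leq sg$ are immediate from $1 < a \leq b$ and $s > s_0$; using $sg \leq s_1$, the linear formulas give $sfg = sgf = ab(s - s_0) + s_0 > sg$. A short computation using $a < b$ further shows that for every $x \in C := [s, sg)$ one has $xf < sg^2 \leq g(s_1)$, so the associated circle map $\gamma$ on $C$ is computed entirely by the linear formulas for $f$ and $g^{-1}$.

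Setting $s' := \phi(s)$, conjugation by $\phi$ then carries $\gamma$ to the circle map on $[s', s' + 1)$ associated to the translations $y \mapsto y + \xi$ and $y \mapsto y + \eta$ at $s'$. Observation \ref{trans_obs}, whose hypotheses are satisfied with these values of $\xi$ and $\eta$, yields rotation number $\xi/\eta = \log_b a$. Because rotation numbers of homeomorphisms of a topological circle are invariants of topological conjugacy, the rotation number of $f$ modulo $g$ at $s$ equals $\log_b a$ as claimed.

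The sole delicate point --- really bookkeeping rather than a substantive obstacle --- is confirming that the entire forward orbit of $C$ under $\gamma$ remains in the region where both $f$ and $g^{-1}$ act by their given linear formulas, so that the logarithmic conjugacy is valid pointwise throughout the iteration. This comes down to the chain of inequalities $xf \leq sgf = ab(s-s_0)+s_0 < b^2(s-s_0)+s_0 = sg^2 \leq g(s_1)$, each of which follows from the hypothesis $sg \leq s_1$ combined with $a < b$.
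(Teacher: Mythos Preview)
Your approach is essentially the same as the paper's: the paper's proof is the single sentence ``The second observation is a consequence of the first by conjugating $f$ and $g$ by $\log_b \frac{x - s_0}{s}$,'' and your logarithmic change of coordinates $\phi(x)=\log_b(x-s_0)$ differs from this only by an irrelevant additive constant. Your write-up is a correct elaboration of that one line; the only slip is the inequality ``$0<\xi<\eta<1$,'' which should read $0<\xi<\eta=1$ since $\eta=\log_b b=1$ (this does not affect the argument, as Observation~\ref{trans_obs} only requires $0<\xi<\eta$).
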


The second observation is a consequence of the first by conjugating
$f$ and $g$ by $\log_b \frac{x - s_0}{s}$.
If $1 < p < q$ are relatively prime integers, then $\log_q(p)$ is irrational.
Since $F_{p,q}$ contains elements which have slope $p$ and $q$ near $0$, Corollary \ref{Fpq} follows
from Observation \ref{ratio_obs} and Theorem \ref{main_result}.

We now turn to Cleary's group $F_\tau$.
Define $f,g \in F_\tau$ by
$$
x f :=
\begin{cases}
x \tau & \textrm{ if } 0 \leq x \leq \tau^{-3} \\
x + \tau^{-2} - \tau^{-3} & \textrm{ if } \tau^{-3} \leq x \leq \tau^{-1} \\
x \tau^{-1} + \tau^{-2} & \textrm{ if } \tau^{-1} \leq x \leq 1
\end{cases}
$$
$$
xg :=
\begin{cases}
x \tau^2 & \textrm{ if } 0 \leq x \leq \tau^{-4} \\
x + \tau^{-2} - \tau^{-4} & \textrm{ if } \tau^{-4} \leq x \leq \tau^{-1} \\
x \tau^{-2} + \tau^{-1} & \textrm{ if } \tau^{-1} \leq x \leq 1
\end{cases}
$$
If we set $s:= \tau^{-3}$, then
$$sf = \tau^{-2} < \tau^{-2} + \tau^{-3}- \tau^{-4} = \tau^{-1} - \tau^{-4} = sg < \tau^{-1}.$$
It follows from Observation \ref{trans_obs} that the rotation number of $f$ modulo $g$ at
$s$ is defined and equals 
$$
\frac{\tau^{-2} - \tau^{-3}}{\tau^{-2} - \tau^{-4}} = \frac{\tau^2 - \tau}{\tau^2 - 1} = \tau^{-1}.
$$
Since $\tau^{-1}$ is irrational, $(f,g)$ is an $F$-obstruction.

Finally, we wish to show that the group generated by $F \cup F^{t \mapsto t - \xi}$ contains an $F$-obstruction
whenever $0 < \xi < 1$.
Recall that $F$ is the subgroup of $\PLoI$ consisting of those elements whose singularities occur at
dyadic rationals and whose slopes are powers of $2$.
Let $\xi$ be given and let $n$ be such that $2^{-n} < \xi < 1-2^{-n+2}$.
Observe that the following functions $f$, $g_0$, and $g_1$
are in either $F$ or $F^{t \mapsto t - \xi}$:
$$
x f:=
\begin{cases}
2x+\xi & \textrm{ if } -\xi \leq x \leq 2^{-n} - \xi \\
x+2^{-n} & \textrm{ if } 2^{-n}-\xi \leq x \leq 1-2^{-n+1} - \xi \\
2^{-1} (x + 1 - \xi) & \textrm{ if } 1-2^{-n+1} - \xi \leq x \leq 1-\xi \\
x & \textrm{ otherwise} 
\end{cases}
$$
$$
xg_0:=
\begin{cases}
2x & \textrm{ if } 0 \leq x \leq 2^{-1} - 2^{-n-1} \\
x + 2^{-1} - 2^{-n-1} & \textrm{ if } 2^{-1} - 2^{-n-1} \leq x \leq 2^{-1} \\
2^{-n} x + 1 - 2^{-n} & \textrm{ if } 2^{-1} \leq x \leq 1 \\
x & \textrm{ otherwise}
\end{cases}
$$
$$
xg_1:=
\begin{cases}
2^n (x+\xi) - \xi  & \textrm{ if } -\xi \leq x \leq 2^{-n-1} -\xi \\
x + 2^{-1}-2^{-n-1} & \textrm{ if } 2^{-n-1} - \xi \leq x \leq 2^{-n} -\xi \\
2^{-1} (x+\xi) + 2^{-1}- \xi & \textrm{ if } 2^{-n}- \xi \leq x \leq 1-\xi \\
x & \textrm{ otherwise}
\end{cases}
$$
Set $g := g_0 g_1$.
Observe that by our choice of $n$, if $0 \leq x \leq (1-\xi)/2$, then:
$$
x f = x + 2^{-n}
\qquad
x g = x + (1-\xi)/2.
$$
Since  $0f = 2^{-n} < (1-\xi)/2 = 0g$, it follows from Observation \ref{ratio_obs}
that the rotation number of $f$ modulo $g$ at $0$ is defined and equals
$2^{-n+1}/(1-\xi)$, which is irrational.
Hence $(f,g)$ is an $F$-obstruction.

\begin{rem}
We do not know if $\gen{\bigcup_{q \in \Q} F^{t \mapsto t -q}}$ embeds into $F$.
We conjecture it does not.
Note that if $G \leq \gen{\bigcup_{q \in \Q} F^{t \mapsto t -q}}$ is finitely generated,
then $G$ is conjugate to a subgroup of $F$.
Specifically if $X \subseteq \frac{1}{n} \Z$, then 
$\gen{\bigcup_{q \in X} F^{t \mapsto t -q}}$ is
conjugate to a subgroup of the real line model
of $F$ via the map $t \mapsto nt$.
\end{rem}


\begin{thebibliography}{10}

\bibitem{BieriStrebel}
R. Bieri, R. Strebel, \emph{On groups of {PL}-homeomorphisms of the real line},
in \emph{Mathematical Surveys and Monographs}, \textbf{215}.
American Math. Society, Providence, RI., 2016. xvii+174pp.

\bibitem{alg_class}
C. Bleak, \emph{An algebraic classification of some solvable groups of
  homeomorphisms}, J. Algebra \textbf{319} (2008), no.~4, 1368--1397.

\bibitem{fast_gen}
C. Bleak, M. G. Brin,  M. Kassabov,  J.Tatch Moore, and M. C. B. Zaremsky, 
\emph{Groups of fast homeomorphisms of the interval and the
              ping-pong argument}, J. Comb. Algebra
\textbf{44} (2019), no.~3, 1--40.

\bibitem{ubiquity}
M.~G. Brin, \emph{The ubiquity of {T}hompson's group ${F}$ in groups of
  piecewise linear homeomorphisms of the unit interval}, J. London Math. Soc.
  (2) \textbf{60} (1999), no.~2, 449--460.

\bibitem{PLoI}
M. G. Brin and C. G. Squier,
\newblock Groups of piecewise linear homeomorphisms of the real line.
\newblock {\em Invent. Math.}, 79(3):485--498, 1985.
  
\bibitem{pres_cong_PLoI}
\bysame,
\emph{Presentations, conjugacy, roots, and centralizers in groups of
piecewise linear homeomorphisms of the real line}, Comm. Algebra
\textbf{29} (2001), no.~10, 4557--4596.

\bibitem{CFP}
J.~W. Cannon, W.~J. Floyd, and W.~R. Parry, \emph{Introductory notes on
  {R}ichard {T}hompson's groups}, Enseign. Math. (2) \textbf{42} (1996),
  no.~3-4, 215--256. 

\bibitem{simple_orderable}
C.~G.~Chehata,  \emph{An algebraically simple ordered group},
Proc. London Math. Soc. (3) \textbf{2} (1952), 183--197.

\bibitem{irrational_PLoI}
S. Cleary, \emph{Groups of piecewise-linear homeomorphisms with irrational slopes}, Rocky Mountain J. Math.
\textbf{25} (1995), no.~3, 935--955.

\bibitem{Ftau}
\bysame, \emph{Regular subdivision in {$\mathbf{Z}[\frac{1+\sqrt 5}{2}]$}}, Illinois J. Math.
\textbf{3} (2000), no.~1, 453--464.


\bibitem{simple_homeo}
D. B. A. Epstein,  \emph{The simplicity of certain groups of homeomorphisms},
Compositio Math. \textbf{22} (1970), no.~2, 165--173.

\bibitem{T_rational_rot}
\'{E}. Ghys and V. Sergiescu, \emph{Sur un groupe remarquable de diff\'{e}omorphismes du cercle},
Comment. Math. Helv. \textbf{62} (1987), no.~2, 185--239.

\bibitem{KassabovMatucci}
M. Kassabov and F. Matucci,
\emph{The simultaneous conjugacy problem in groups of piecewise linear functions}, 
Groups Geom. Dyn. \textbf{6} (2012), no.~2, 279--315. 

\bibitem{coherent_actions}
Y. Lodha,  \emph{Coherent actions by homeomorphisms on the real line or an interval},
to appear in Israel J. Math.

\bibitem{MacKay}
R. S. MacKay, \emph{A simple proof of {D}enjoy's theorem},
Math. Proc. Cambridge Philos. Soc. \textbf{103} (1988), no.~2, 299--303.

\bibitem{McCleary}
S. H. McCleary, \emph{Groups of homeomorphisms with manageable automorphism groups},
Comm. Algebra \textbf{6} (1978), no.~5, 497--528.

\bibitem{RubinTheorem}
M. Rubin,  \emph{On the reconstruction of topological spaces from their groups of homeomorphisms},
Trans. Amer. Math. Soc. \textbf{312} (1989), no.~2, 487--538.

\bibitem{grps_PLoI}
M. Stein, \emph{Groups of piecewise linear homeomorphisms}, Trans. Amer. Math. Soc.
 \textbf{332} (1992), no.~2, 477--514. 
 
\end{thebibliography}
\end{document}